\documentclass[twoside, reqno, 10pt]{amsart}

\usepackage[left=4cm, right=4cm, top=3cm, bottom=2.5cm]{geometry}

\usepackage[colorlinks=true, pdfstartview=FitV, linkcolor=blue,citecolor=blue, urlcolor=blue]{hyperref}

\usepackage[usenames]{xcolor} 

\definecolor{labelkey}{rgb}{0,0,1}
\definecolor{Red}{rgb}{0.7,0,0.1}
\definecolor{Green}{rgb}{0,0.7,0}

\usepackage{amsfonts, amssymb, amsmath, mathrsfs, bbm, cjhebrew, gensymb, textcomp, mathtools, dsfont, calligra}

\usepackage[normalem]{ulem}

\usepackage{commath, setspace, subcaption, parcolumns, multirow, multicol, accents, comment, marginnote, verbatim, empheq, enumerate, stackrel, enumitem, float, physics}
\usepackage{amsthm}
\usepackage{thmtools}
\usepackage[capitalize,nameinlink,noabbrev]{cleveref}

\usepackage{graphicx, psfrag, tikz, tikz-cd}
\usepackage[notransparent]{svg} 

\numberwithin{equation}{section}

\usepackage{tikz}

\usepackage{algorithm}
\usepackage{algpseudocode}

\newtheorem{Thm}{Theorem}[subsection]

\newtheorem{Prop}[Thm]{Proposition}
\newtheorem{Cor}[Thm]{Corollary}

\newtheorem{Rmk}[Thm]{Remark}

\newtheorem*{Thm*}{Theorem}

\crefname{Thm}{Theorem}{Theorems}
\crefname{Lem}{Lemma}{Lemmas}
\crefname{Prop}{Proposition}{Propositions}
\crefname{Rmk}{Remark}{Remarks}
\Crefname{Thm}{Theorem}{Theorems}
\Crefname{Lem}{Lemma}{Lemmas}
\Crefname{Prop}{Proposition}{Propositions}
\Crefname{Rmk}{Remark}{Remarks}

\newcommand{\veps}{\varepsilon}

\usepackage{cancel}

\newcommand{\tv}{\tilde{v}}

\newcommand{\mut}{\tilde{\mu}}

\usepackage[backgroundcolor=white]{todonotes}
\usepackage{amssymb}

\usepackage{multicol}
\usepackage[normalem]{ulem}

\usepackage{placeins}
\usepackage{afterpage}
\usepackage{cite}

\graphicspath{{figures/BFN_variants}}

\title{A critical note on back-and-forth Data Assimilation Nudging Algorithm}

\author{Aseel Farhat}
\address{(Aseel Farhat) Department of Mathematics, University of Virginia, Charlottesville, VA 22904, USA}
\email{af7py@virginia.edu}

\author{Edriss S. Titi}
 \address{(Edriss S. Titi) Department of Applied Mathematics and Theoretical Physics, University of Cambridge, Cambridge CB3 0WA, UK; Department of Mathematics, Texas A\&M University, College Station, TX 77843, USA; and Department of Computer Science and Applied Mathematics, Weizmann Institute of Science, Rehovot, 76100, Israel}
\email{edriss.titi@maths.cam.ac.uk, and titi@math.tamu.edu}  

\author{Collin Victor}
\address{(Collin Victor) Department of Mathematics, Texas A\&M University, College Station, TX 77843, USA}
\email{collin.victor@tamu.edu}
\date{April 21, 2026}

\begin{document}

\begin{abstract}
This work investigates the effectiveness of the Back-and-Forth Nudging (BFN) data assimilation algorithm, specifically its performance when employing the Azouani-Olson-Titi (AOT) continuous data assimilation downscaling nudging algorithm, for recovering initial conditions of dissipative dynamical systems. Contrary to previous reports in the literature, we show that, for several systems of interest, one can construct initial conditions that BFN cannot reliably recover.

Our key finding is the construction of infinitely many distinct solutions for certain dissipative systems that share identical spatially sparse observational data. Since these observations are indistinguishable, no data assimilation method relying only on them can differentiate between these solutions or recover the correct initial condition. We illustrate these pathological initial conditions for the Lorenz 1963 model and several 1D partial differential equations: the heat equation, viscous linear transport, and viscous and inviscid Burgers equations. Our analytical results are supported by numerical simulations.

To address the numerical ill-posedness of backward-in-time iterations, an essential step of BFN for dissipative models, we introduce a regularized backward step, the Voigt-regularized BFN (VBFN). We investigate its performance for the 2D Navier-Stokes equations and the viscous 1D KdV equation, comparing it with standard BFN and Diffusive BFN (dBFN). While VBFN improves numerical stability and reduces model bias relative to dBFN, it still cannot reconstruct the unobserved fine spatial scales of the reference solution. This reinforces our main conclusion: even with regularization, BFN-type algorithms are limited in recovering the full state, and in particular the initial data, from sparse spatial observations.
\end{abstract}

\subjclass{Primary: 35Q93, 37N35; Secondary: 35R30, 35K05, 35K57, 93B52.}
\keywords{Data assimilation, Azouani-Olson-Titi (AOT) algorithm, back-and-forth nudging, Lorenz 1963 system, heat equation, linear transport equation, viscous Burgers equation, 2D Navier-Stokes equations, 1D KdV equation, Voigt regularization}

\maketitle

\begin{center}
\em In memory of Professor Peter Lax
\end{center} 

\section{Introduction}

Accurate computational prediction of nonlinear evolution systems encounters two major challenges. That of initialization and of long-time accuracy, namely, how do we initialize the system and how do we ensure that our computed solution remains accurate for long intervals of time. Data assimilation is a class of techniques that attempts to resolve both of these issues by incorporating coarse spatial observational data over time into the underlying evolutionary physical model. The purpose of data assimilation is to alleviate the reliance on knowing the complete state of the initial data and achieve synchronization of our simulated solution with the true dynamics by utilizing adequately collected sparse spatio-temporal  observational data (measurements). This study focuses on the former issue, specifically on using the Back-and-Forth Nudging (BFN) data assimilation algorithm to recover initial conditions from sparse spatial observations over a finite time interval.

Our method adapts the Azouani-Olson-Titi (AOT) continuous data assimilation nudging algorithm given in \cite{Azouani_Olson_Titi_2014, Azouani_Titi_2014} to the iterative BFN framework. The BFN approach \cite{auroux2003etude, Auroux_Bansart_Blum_2013, auroux2005back, Auroux_Blum_2008, Auroux_Blum_Ruggiero_2016, Auroux_Nodet_2012, Peng_Wu_Shiue_2023} attempts to recover the true initial condition of an unknown reference solution by iteratively nudging the forward-in-time and backward-in-time equations of a specified dynamical system toward spatially sparse observational data collected over a finite time interval. However, this study concludes that the BFN AOT nudging algorithm is ill-suited for this task. We revisit and investigate the efficacy of the BFN algorithm on some of the models investigated earlier in \cite{auroux2003etude, Auroux_Bansart_Blum_2013, auroux2005back, Auroux_Blum_2008, Auroux_Blum_Ruggiero_2016, Auroux_Nodet_2012, Peng_Wu_Shiue_2023}, namely, the Lorenz 1963 model and several 1D PDEs, specifically the heat equation, the inviscid/viscous linear transport equation, and the inviscid/viscous Burgers equations.

We will now describe the general framework of the BFN nudging algorithm that we consider in this work.
Consider a generic dynamical system governed by the evolution equation
\begin{align}\label{eq:generic system}
    \begin{cases}u_t = F(u),\\
    u(0,x) = u_0(x).
    \end{cases}
\end{align}
Here, the mapping $F$ is a potentially nonlocal and nonlinear differential operator. In the literature on forward nudging, such systems are typically assumed to be dissipative with a finite-dimensional global attractor and finitely many determining parameters—such as determining modes, nodal values, and local spatial averages (see, e.g., \cite{C_O_T, F_M_R_T, Foias_Prodi,Foias_Temam_2, Foias_Titi, Holst_Titi, Jones_Titi, Jones_Titi_2} and the references therein). Roughly speaking, dissipation, in this context, means that the dynamical system has a compact absorbing set which trajectories enter after a finite amount of time and remain within thereafter. We will assume this equation is well-posed on the time interval $[0,T]$, a condition that is particularly necessary for the inviscid Burgers equation, which is known to develop shocks in finite time. While the initial condition is included in the model, its value is assumed to be unknown, as our goal is to use the BFN algorithm to recover the true initial state from partial observations of a reference solution to the system.

The BFN algorithm as described, for example in \cite{auroux2003etude, Auroux_Blum_2008, Auroux_Nodet_2012}, is given as follows:
\begin{align} (F) &
\begin{cases}
        v_t^n = F(v^n) + \mu I_h(u - v^n),\\
        v^n(0, x) = \tv^{n-1}(0, x),
\end{cases}\label{eq:BFN Forward}
\\
(B) &
\begin{cases}
        \tilde{v}_t^n = F(\tilde{v}^n) - \mut I_h(u - \tilde{v}^n),\\
         \tv^n(T, x) = v^{n}(T, x),\\   
\end{cases} \label{eq:BFN Backward}
\end{align}
for $t\in [0,T]$. Here $\mu, \mut \geq 0$ are the nudging parameters which control the strength of the feedback-control nudging term $I_h(u-v)$. Here the true solution $u$ is observed at the observational length scale $h$, and then the observed data is interpolated to the entire spatial domain via a spatial approximate identity interpolant operator $I_h$. 
The back-and-forth nudging algorithm is so named due to the iteration between the ``forward'' equation, \cref{eq:BFN Forward}, and the ``backward'' equation, \cref{eq:BFN Backward}, which we denote with (F) and (B), respectively, throughout this work.
It is worth noting that \cref{eq:BFN Backward} and \cref{eq:BFN Forward} are equivalent up to a sign change on the nudging term and the initial/terminal condition.
This sign change arises due to the fact that \cref{eq:BFN Backward} has a specified terminal condition and is nudged backwards-in-time.
We note that the forward equation \cref{eq:BFN Forward} and the backward equation \cref{eq:BFN Backward} are the equations for the AOT nudging algorithm (up to a sign change on the nudging term), which we will refer to generically as nudging. 
It should be noted, however, that, in the context of classical methods of data assimilation, nudging refers to a method developed by Hoke and Anthes \cite{Hoke_Anthes_1976_MWR}. This classical method of nudging is superficially similar to \cref{eq:BFN Forward} but lacks the spatial interpolant $I_h$ which has crucial implications for recovery of solutions to dynamical systems.  We note that the BFN algorithm does require us to choose $\tv^0(0)$ in order to initialize the algorithm, which we will typically choose to be identically zero. 
The goal of the algorithm is to improve an estimate for the initial state of $u$  by iteratively assimilating observations both forward and backwards in time.
As we will see in the later sections, this is not the case, as the BFN algorithm is unable to recover transient states that are under-resolved at the observational length scale. 
In fact, as we will review below, the only reported successful cases of the BFN algorithm occur when the reference solutions are fully observed at every point in space and time. In such instances, the supposedly unknown reference solution is already known and given; consequently, using a data assimilation algorithm to recover it is redundant and of little practical utility.

The convergence of the BFN algorithm \cref{eq:BFN Forward}-\cref{eq:BFN Backward} was proven for a linear model with full observations (at every spatial point of the domain) of the exact solution in \cite{auroux2005back}. Subsequently, its numerical convergence with discrete (perfect or noisy) data was successfully demonstrated on the Lorenz model, the viscous Burgers equation, and the layered quasi-geophysics model \cite{Auroux_Blum_2008}.

The theoretical convergence properties of the BFN algorithm with perfect observations (no error) for transport equations, whether viscous or inviscid, were explored in \cite{Auroux_Nodet_2012}. Three cases were considered: 1) full observation for any point $x$, in the spatial domain, and any time $t$, in $[0,T]$, for the exact solution $u$ ; 2) observations on a time window $[t_1, t_2]$ for any point $x$, in the spatial domain, of the exact solution $u$; and 3) observations at any time $t$, in $[0,T]$, over a given subdomain in $x$ of the exact solution $u$. In the linear viscous case, it was shown that the BFN algorithm converges with an exponentially decreasing error if the solution $u$ is fully observed in at each spatial point on some time subinterval $[a,b] \subseteq [0,T]$. Conversely, the backward equation is ill-posed, even in the distribution sense, when the system is partially observed in space $x$. The proof for this was provided for the special case of linear transport with constant transport speed and when the system is observed on a compact subdomain in space $x$. In the inviscid linear case, the algorithm was shown to converge in all three cases when the system is partially observed in subdomains in space or time.

The BFN algorithm was also investigated for the viscous and inviscid Burgers equations in \cite{Auroux_Nodet_2012}. For the viscous case, the backward equation is shown to be ill-posed due to the presence of the viscosity term. For the inviscid case, and for a time $T$ where there is no shock in the interval $[0,T]$, the BFN algorithm was shown to converge for a sufficiently large choice of the nudging parameter, which depends on bounds on the true solution of the Burgers equation. This convergence was demonstrated only for the case when the solution of the inviscid Burgers equation is fully observed in every $x$ in the domain, at least in a subinterval in time. 

A recent paper, \cite{Peng_Wu_Shiue_2023}, investigated the convergence of the BFN algorithm for the Lorenz 63 system both analytically and numerically. For the continuous-in-time case, where the system is observed fully in time, i.e., all the three components are observed or given, it was shown theoretically that BFN converges with a large enough nudging parameter, which depends on bounds of the true reference solution. For the discrete-in-time case, the convergence of the BFN algorithm was proven under an additional assumption: that the assimilated time step is small enough, also depending on bounds of the true reference solution. Their numerical experiments showed that the assimilated time step significantly impacts the algorithm's accuracy. The inverse relationship between the assimilated time step and the nudging parameter suggests that for a smaller time step, the nudging parameter must be larger, which in turn leads to a larger error. The authors conjectured that nonlinear nudging might be a better choice in this scenario, leaving it as a topic for future study. A more recent work, \cite{Amraoui_2023}, theoretically and numerically investigated the convergence properties of the BFN algorithm for the quasi-geostrophic ocean model. It was shown that the algorithm will asymptotically converge to the true initial condition in the case of continuous-in-time, full observations of the exact solution $u$ at every point in the spatial domain. This result was verified numerically. It was also shown that numerical convergence is not always perfect for discrete-in-time observations, with or without noise. 

We will consider in particular the standard BFN algorithm \cref{eq:BFN Forward}-\cref{eq:BFN Backward} to illustrate its shortcomings, which can make it unsuitable for recovering solutions to certain dynamical systems—even those for which the forward AOT algorithm is successful. Based on a review of the literature, the BFN algorithm appears to perform and converge effectively when provided with full observations of the exact solution at every point in space and time. However, in such cases, the initial condition is already known, making the use of a data assimilation algorithm to recover it of little practical utility and extraneous. Therefore, our discussion will focus on the more challenging and realistic scenarios of sparse spatial observations or incomplete data, where these algorithms are truly needed, and where they face significant issues that are common to all data assimilation methods. 

To highlight this limitation, we examine the BFN algorithm's application to a selection of relevant dynamical systems, including the Lorenz 1963 model and several 1D PDEs: the heat, inviscid/viscous linear transport, and inviscid/viscous Burgers equations. These classic examples were chosen to illustrate a key limitation for the community: the algorithm's performance is intrinsically tied to the system's observability, a condition often not met with sparse spatial data. For the sake of brevity, we have limited our discussion to these systems, although similar results have been found computationally for other evolution equations in higher dimensions. In summary, this note is a focused critique that uses a rigorous combination of analytical and numerical methods to expose a key limitation and shortcoming of the BFN data assimilation algorithm.

\subsection{Diffusive and Voigt Regularization of the BFN Algorithm}

The presence of diffusion in a model's equations generally makes the backward equation \cref{eq:BFN Backward} in the BFN algorithm ill-posed. However, in meteorology and oceanography, many models are theoretically diffusion-free, with diffusion terms added primarily to stabilize numerical integrations forward in time. Based on this, \cite{Auroux_Bansart_Blum_2013} proposed modifying the BFN algorithm to change the sign of the diffusion term during the backward nudging step to ensure stability. The main advantage of this new approach, known as the diffusive Back and Forth Nudging (dBFN) or BFN2, is that both the forward and backward equations are well-posed, whereas in the original BFN algorithm, the backward solution might not depend continuously on the forward solution.

The new dBFN algorithm was numerically tested on the viscous Burgers equation, both with and without shocks, using two scenarios: full, noise-free observations and sparse, noisy observations. The results showed that the dBFN algorithm consistently converged to a solution closer to the true initial condition (achieving a better relative error) compared to the standard BFN algorithm. It also converged in fewer iterations, was less sensitive to observation sparsity, and was more robust to the length of the assimilation time window. While it may not always converge to the exact initial condition, it often provides the best initial condition estimates compared to other data assimilation schemes. The dBFN algorithm was later investigated numerically in \cite{Auroux_Blum_Ruggiero_2016} for a 2D shallow water model and a 3D primitive equation ocean model, with similar conclusions reached.

At this point, one might consider other modifications of the algorithm that regularize the backward-in-time iteration step, thereby increasing numerical stability. One such natural modification is \textit{Voigt regularization}, which we describe here in the context of the 2D Navier-Stokes equations and the viscous 1D Korteweg-de Vries (KdV) equation.

The incompressible 2D Navier-Stokes equations (NSE) describe the motion of a viscous fluid and are given by:
\begin{equation}\label{eq:NSE}
u_t - \nu \Delta u + (u \cdot \nabla) u + \nabla p = f, \quad \nabla \cdot u = 0,
\end{equation}
where $u(x,t)$ is the unknown velocity field, $p$ is the unknown pressure, and $\nu > 0$ is the kinematic viscosity which is given. While globally well-posed in two dimensions, the numerical simulation of the 2D NSE at high Reynolds numbers, e.g., small viscosity, remains challenging due to the cascade of energy to small scales. This necessitates robust regularization techniques for both theoretical analysis and computational efficiency.

The Navier-Stokes-Voigt (NSV) equations model Kelvin-Voigt viscoelastic fluids \cite{oskolkov1973} by incorporating the regularization term, $-\alpha^2 \Delta u_t$, to \cref{eq:NSE}:
\begin{equation}\label{eq:NSE:VOIGT}
u_t - \alpha^2 \Delta u_t - \nu \Delta u + (u \cdot \nabla) u + \nabla p = f, \quad \nabla \cdot u = 0.
\end{equation}
This regularization acts as a mollifier of the nonlinear term, providing global well-posedness even in 3D for any $\alpha > 0$ \cite{Cao_Lunasin_Titi_2006, kalantarov2009}. For any finite time interval $[0, T]$ where a smooth NSE solution exists, the trajectories of the NSV model converge strongly to those of the NSE as the regularization parameter $\alpha \to 0$. Furthermore, it has been rigorously established that the global attractors $\mathcal{A}_\alpha$ are upper semicontinuous to the NSE attractor, and the stationary statistical solutions (invariant measures) $\mu_\alpha$ of the NSV equations converge to those of the 3D NSE \cite{Ramos_Titi_2010}. Numerical investigations using the Sabra shell model have further demonstrated that for small values of $\alpha$ the NSV equations accurately capture the energy-containing scales and exhibit a multiscaling inertial range consistent with standard sub-grid turbulence theory \cite{Levantetal2010}.

A mathematically compelling feature of Voigt regularization is its behavior in backward-in-time problems. Unlike the standard NSE, which is classically ill-posed due to the exponential growth of high-frequency modes under reversed diffusion, the NSV model provides a robust framework for backward continuation. The regularization term $-\alpha^2 \Delta u_t$ effectively mollifies the nonlinear term and inhibits the forward cascade of energy \cite{Levantetal2010}. Crucially, the Voigt term ensures that the backward-in-time problem remains well-posed in the sense of Hadamard \cite{Cao_Lunasin_Titi_2006, kalantarov2009}. This property ensures continuous dependence on the final data in the $V$ (or $H^1$) norm, which has significant implications for state estimation and the reconstruction of historical flow data.

Voigt regularization has also been successfully generalized to other nonlinear dispersive equations, most notably the Korteweg-de Vries (KdV) equation. The resulting \textit{KdV-Voigt} equation, which is structurally related to the Benjamin–Bona–Mahony (BBM) equation \cite{BBM_1972}, is given by:
\begin{equation}
u_t - \alpha^2 u_{xxt} + \gamma u - \nu u_{xx} + u_{xxx} + uu_x = 0,
\end{equation}
where $\gamma, \nu \geq 0$. It is essential to distinguish between the purely dispersive KdV equation ($\gamma = \nu = 0$) and its damped ($\gamma > 0$) or viscous ($\nu > 0$) counterparts. 

In the inviscid, purely dispersive case, the backward-in-time evolution remains well-posed, although it is often plagued by numerical oscillations near solitary waves. By incorporating the term $-\alpha^2 u_{xxt}$, this model replaces the purely dispersive nature of the KdV equation with a regularized temporal structure. 
A primary result in this field is that the Voigt term acts as a low-pass filter on the time derivative, providing a smoothing effect that guarantees global regularity in higher-order Sobolev spaces and prevents the wave steepening that typically leads to numerical instabilities \cite{marquez2019, kalantarov2009}. 

However, the introduction of linear damping ($\gamma > 0$) or viscosity ($\nu > 0$) renders the backward step ill-posed or unstable due to the resulting anti-dissipative terms. In the damped-driven regime, reversing the sign of time turns the damping term into an energy-generating source that amplifies all modes uniformly. In the viscous regime, the effect is even more problematic; the resulting anti-diffusive heat operator causes high-wavenumber modes to grow exponentially. In these dissipative-dispersive settings, the Voigt regularization is necessary to bound the growth of these modes. Specifically, the operator $(I - \alpha^2 \partial_x^2)^{-1}$ acts to mollify the nonlinearity as well as the growth rates. It maps the $L^2$ growth of the anti-diffusion to a bounded evolution in $H^1$ and similarly tempers the anti-damping growth, providing a stable framework for the BFN backward pass.

Notably, while it has been established that the continuous data assimilation algorithm for the 2D NSE \cite{Azouani_Olson_Titi_2014} and the weakly damped KdV equation \cite{Jolly_Sadigov_Titi_2017} can recover the full reference solution at an exponential rate from a finite number of low Fourier modes, our results indicate that this asymptotic synchronization does not ensure the successful iterative recovery of initial conditions within the BFN framework. In this context, we investigate a regularized version of the algorithm for both the 2D NSE and 1D dispersive-dissipative models, including the viscous and damped KdV equations. For these systems, we utilize the standard equations for the forward steps but substitute the Navier-Stokes-Voigt and the regularized KdV-Voigt models for the backward-in-time iterations. This approach is designed to mitigate the inherent ill-posedness of the backward evolution.

Our analysis and numerical experiments demonstrate that while the BFN variant that uses the Voigt regularization for the backwards step remains stable and converges, it does so with a residual error floor directly related to the model discrepancy introduced by the Voigt regularization. To minimize this discrepancy, we also investigate ``filtered'' variants of the regularized schemes that restrict stabilization solely to the unobserved spectral modes. Although we do not pursue a full analytical treatment of the Voigt-regularized schemes in this work, we present numerical experiments in \cref{sect:NSE} and \cref{sect:KdV} comparing the performance of the standard BFN, its diffusive/damped modifications (dBFN), and the proposed Voigt-regularized algorithm (VBFN). 

The remainder of this paper is organized as follows. In \cref{sect:Lorenz}, we examine the applicability of the BFN algorithm to the Lorenz 1963 system and identify a specific class of solutions where the algorithm fails to recover the true initial state, particularly the unobserved third component of the Lorenz system, i.e., when provided with sparse observational data, in this case only part of the components are observed. In \cref{sect:1D}, we extend these results to show that analytical counterexamples can be constructed for select 1D PDEs, such as the heat, linear transport, and Burgers equations. These counterexamples prove that distinct initial conditions can produce identical spatial sparse observational data, thereby precluding the recovery of the true initial state.

In \cref{sect:NSE}, we introduce the VBFN for the 2D Navier-Stokes equations and present numerical experiments comparing its performance against standard and diffusive BFN schemes. We examine the ability of these methods to reconstruct the energy and enstrophy spectra across observed and unobserved scales. 

Similarly, \cref{sect:KdV} explores the application of the VBFN to the 1D viscous and damped KdV equations. We evaluate various stabilization strategies, contrasting the reversal of linear damping and viscosity signs with Voigt-Helmholtz smoothing. For the damped KdV system, we show that while damping reversal prevents blow-up, it introduces a significant bias in the observed modes. In the viscous KdV case, where standard BFN exhibits catastrophic growth across over 100 orders of magnitude, we demonstrate that VBFN successfully tames the instability. Notably, we show that while regularization and spectral filtering achieve numerical stability, they remain fundamentally unable to recover unobserved high-frequency wavemodes. Finally, we conclude with a summary and discussion of our results in \cref{sect:Conclusion}.

\section{The Lorenz 1963 Model}\label{sect:Lorenz}
Lorenz introduced in 1963 a simplified/reduced model of atmospheric convection \cite{Lorenz_1963}, and so it is of particular interest for studying methods of data assimilation, see, e.g., \cite{Hayden_Olson_Titi_2011,Blocher_Martinez_Olson_2018,Du_Shiue_2021,Hayden_2007,Pecora_Carroll_1990,Law_Shukla_Stuart_2014} and the references within.
The Lorenz 1963 system is given by:
\begin{align}
    \begin{cases}\label{eq:Lorenz}
    \dot{u_1} = -\sigma u_1 + \sigma u_2,\\
    \dot{u_2} = -\sigma u_1 - u_2 - u_1u_3, \\
    \dot{u_3} = -bu_3 + u_1u_2 - b(\rho+\sigma),\\
    (u_1(0),u_2(0),u_3(0)) = U_0.
\end{cases}
\end{align}
Here $\sigma>0$ is the Prandtl number, $\rho>0$ is the Rayleigh number, and $b >0$ is a geometric factor.
It is well-known that, for certain parameter choices (e.g., $\sigma  = 10$, $\rho = 28$, $b = 8/3$), that the Lorenz system has a chaotic global attractor \cite{Tucker_1999}. 

We note that in the remainder of this section we will be utilizing the Euclidean $\ell^2$ norm on $\mathbb{R}^3$ defined for $f = (f_1,f_2,f_3)\in \mathbb{R}^3$ as $\norm{f}_{\ell^2} = \left(f_1^2 + f_2^2 + f_3^2\right)^{\frac{1}{2}}$.

\subsection{BFN with Partial Components Observations for the whole interval of time} 
We recall that the BFN algorithm for the Lorenz system with observations of all the three components $u_1(t), u_2(t)$, and $u_3(t)$ of the exact reference solution of \cref{eq:Lorenz}, for $t\in [0,T]$, is given by:
\begin{align}
    (F) &
    \begin{cases}\label{eq:Lorenz-XYZ-Nudging-full-F}
    \dot{v}_1^n = -\sigma v_1^n + \sigma v_2^n + \mu_1(u_1 - v_1^n),\\
    \dot{v}_2^n = -\sigma v_1^n - v_2^n - v_1^nv_3^n + \mu_2(u_2 - v_2^n), \\
    \dot{v}_3^n = -bv_3^n + v_1^nv_2^n - b(\rho+\sigma) +  \mu_3(u_3 - v_3^n),\\
    \left(v_1^n(0),v_2^n(0),v_3^n(0)\right) = \tilde{v}_0^{n-1}:= \left(\tilde{v}_1^{n-1}(0),\tilde{v}_2^{n-1}(0),\tilde{v}_3^{n-1}(0) \right),
\end{cases}\\
(B) &
    \begin{cases}\label{eq:Lorenz-XYZ-Nudging-full-B}
    \dot{\tilde{v}}_1^n = -\sigma \tilde{v}_1^n + \sigma \tilde{v}_2^n - \mut_1(u_1 - \tilde{v}_1^n),\\
    \dot{\tilde{v}}_2^n = -\sigma \tilde{v}_1^n - \tilde{v}_2^n - \tilde{v}_1^n\tilde{v}_3^n - \mut_2(u_2 - \tilde{v}_2^n), \\
    \dot{\tilde{v}}_3^n = -b\tilde{v}_3^n + \tilde{v}_1^n\tilde{v}_2^n - b(\rho+\sigma) - \mut_3(u_3 - \tilde{v}_3^n),\\
    \left(\tilde{v}_1^n(T),\tilde{v}_2^n(T),\tilde{v}_3^n(T)\right) = v_T^{n-1}:= \left(v_1^{n-1}(T), {v}_2^{n-1}(T),v_3^{n-1}(T) \right),
\end{cases}
\end{align}
 for $n = 1,2,...$. Here $\mu_1,\mu_2, \mu_3, \mut_1, \mut_2, \mut_3 \in \mathbb{R}^{+}$ are the nudging coefficients, which modulate the strength of the feedback-control nudging terms. In this section, we will consider the following modified BFN algorithm in which we observe and nudge only part of the state variables (the first two components) in the Lorenz system:
\begin{align}
    (F) &
    \begin{cases}\label{eq:Lorenz-XY-Nudging-F}
    \dot{v}_1^n = -\sigma v_1^n + \sigma v_2^n + \mu_1(u_1 - v_1^n),\\
    \dot{v}_2^n = -\sigma v_1^n - v_2^n - v_1^nv_3^n + \mu_2(u_2 - v_2^n), \\
    \dot{v}_3^n = -bv_3^n + v_1^nv_2^n - b(\rho+\sigma),\\
    \left(v_1^n(0),v_2^n(0),v_3^n(0)\right) = \tilde{v}_0^{n-1}:= \left(\tilde{v}_1^{n-1}(0),\tilde{v}_2^{n-1}(0),\tilde{v}_3^{n-1}(0) \right),
\end{cases}\\
(B) &
    \begin{cases}\label{eq:Lorenz-XY-Nudging-B}
    \dot{\tilde{v}}_1^n = -\sigma \tilde{v}_1^n + \sigma \tilde{v}_2^n - \mut_1(u_1 - \tilde{v}_1^n),\\
    \dot{\tilde{v}}_2^n = -\sigma \tilde{v}_1^n - \tilde{v}_2^n - \tilde{v}_1^n\tilde{v}_3^n - \mut_2(u_2 - \tilde{v}_2^n), \\
    \dot{\tilde{v}}_3^n = -b\tilde{v}_3^n + \tilde{v}_1^n\tilde{v}_2^n - b(\rho+\sigma),\\
    \left(\tilde{v}_1^n(T),\tilde{v}_2^n(T),\tilde{v}_3^n(T)\right) = v_T^{n-1}:= \left(v_1^{n-1}(T), {v}_2^{n-1}(T),v_3^{n-1}(T) \right),
\end{cases}
\end{align}
 for $n = 1,2,...$. 
The BFN algorithm is initialized with $\tilde{v}_0^0 \in \mathbb{R}^3$. While initialization for the forward AOT algorithm is generally arbitrary, we utilize observations of the first two components of $u$ and set $\tilde{v}_0^0 = (u_1(0), u_2(0), z_0)$ for some $z_0 \in \mathbb{R}$. The nudging coefficients $\mu_1, \mu_2, \tilde{\mu}_1, \tilde{\mu}_2 \in \mathbb{R}^{+}$ modulate the strength of the feedback-control nudging terms. In \cref{eq:Lorenz-XY-Nudging-B} we assume continuous observations of $u_1(t)$ and $u_2(t)$ (from \cref{eq:Lorenz}) are available throughout the interval $[0, T]$.

Previous studies establish that observing the first two components of the solution, $u$, to \cref{eq:Lorenz} is sufficient for the asymptotic recovery in time of the true solution. Although the algorithms in \cite{Hayden_2007, Pecora_Carroll_1990} directly enforced equality in the observed components rather than using AOT-style nudging, analogous results for the AOT algorithm have been demonstrated in \cite{Hayden_Olson_Titi_2011, Peng_Wu_Shiue_2023, Titi_Victor_2025}.

For clarity, the iteration index $n$ is omitted in the subsequent discussion of the first BFN iteration. The following results summarize the properties of \cref{eq:Lorenz,eq:Lorenz-XY-Nudging-B,eq:Lorenz-XY-Nudging-F}.
\begin{Thm}\label{thm:Lorenz-uniqueness}
    For any initial data $U_0$, there exists a unique solution $u$ to \cref{eq:Lorenz} such that for all $t\geq 0$ we have
    \begin{align}
        \norm{u(t)}_{\ell^2}^2\leq e^{-2\alpha t}\norm{U_0}_{\ell^2}^2 + \frac{b(\rho+\sigma)^2}{2\alpha} \leq K^2.
    \end{align}
    Here $\alpha = \min\{\sigma, 1, \frac{b}{2}\}$ and $K^2 = \norm{U_0}_{\ell^2}^2 + \frac{b(\rho+\sigma)^2}{2\alpha} $.
\end{Thm}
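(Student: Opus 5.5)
Since the right-hand side of \cref{eq:Lorenz} is a polynomial, hence locally Lipschitz, on $\mathbb{R}^3$, the Picard--Lindel\"of theorem gives a unique local solution on a maximal interval $[0,T^*)$. Global existence then follows once we show that $\norm{u(t)}_{\ell^2}$ stays bounded on $[0,T^*)$, because a finite $T^*$ would force $\norm{u(t)}_{\ell^2}\to\infty$. Uniqueness is part of the local theory. So the whole statement reduces to the a priori energy estimate.

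To get that estimate, take the $\ell^2$ inner product of \cref{eq:Lorenz} with $u$. In this form of the system the cross terms cancel:
\begin{itemize}
\item the linear terms give $\sigma u_1u_2 - \sigma u_1u_2 = 0$;
\item the nonlinear terms give $-u_1u_2u_3 + u_1u_2u_3 = 0$.
\end{itemize}
This cancellation is exactly why the third equation has been shifted by $-b(\rho+\sigma)$. We are left with
\begin{align*}
\frac12\frac{d}{dt}\norm{u}_{\ell^2}^2 = -\sigma u_1^2 - u_2^2 - b u_3^2 - b(\rho+\sigma)u_3 .
\end{align*}
Young's inequality bounds the forcing term by $b(\rho+\sigma)|u_3| \le \frac{b}{2}u_3^2 + \frac{b}{2}(\rho+\sigma)^2$. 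This gives
\begin{align*}
\frac{d}{dt}\norm{u}_{\ell^2}^2 \le -2\sigma u_1^2 - 2u_2^2 - b u_3^2 + b(\rho+\sigma)^2 \le -2\alpha\norm{u}_{\ell^2}^2 + b(\rho+\sigma)^2,
\end{align*}
with $\alpha=\min\{\sigma,1,\frac b2\}$, since $2\alpha\le 2\sigma$, $2\alpha\le 2$ and $2\alpha\le b$.

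Next apply Gr\"onwall's inequality, i.e.\ multiply by $e^{2\alpha t}$ and integrate. This yields
\begin{align*}
\norm{u(t)}_{\ell^2}^2\le e^{-2\alpha t}\norm{U_0}_{\ell^2}^2 + \frac{b(\rho+\sigma)^2}{2\alpha}\bigl(1-e^{-2\alpha t}\bigr),
\end{align*}
which is at most the stated bound. That bound is in turn at most $K^2$, since $e^{-2\alpha t}\le 1$. Because the bound holds on all of $[0,T^*)$, it rules out blow-up, so $T^*=\infty$ and the estimate holds for all $t\ge0$.

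There is no real obstacle here. The only delicate point is checking the exact cancellation of the cross terms and choosing the Young splitting so that the constant comes out as $b(\rho+\sigma)^2/(2\alpha)$ with precisely this $\alpha$. Taking $\frac b2 u_3^2$ from the $bu_3^2$ term is what produces the $\frac b2$ inside the minimum.
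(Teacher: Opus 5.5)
Your proof is correct and follows the standard route the paper relies on; the paper gives no proof of its own and only cites Temam's monograph. That route is cancellation of the cross terms in the energy identity, Young's inequality on the constant forcing, Gr\"onwall, and the blow-up alternative for global existence, and your Young splitting reproduces exactly the stated $\alpha=\min\{\sigma,1,\frac b2\}$ and $K^2$. One side remark needs correcting: the shift $u_3=z-(\rho+\sigma)$ of the classical Lorenz variable is what makes the \emph{linear} cross terms cancel (it turns $\rho u_1$ into $-\sigma u_1$ in the second equation), whereas the trilinear cancellation $-u_1u_2u_3+u_1u_2u_3=0$ holds with or without the shift.
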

The proof of this fact can be found in, e.g., \cite{Temam_1997_IDDS}.

\begin{Thm}
    Let $T>0$. For any initial $v(0)$ (or terminal $\tv(T)$) data in $\mathbb{R}^3$, both the forward (\cref{eq:Lorenz-XY-Nudging-F}) and backward (\cref{eq:Lorenz-XY-Nudging-B}) equations admit a unique solution in $C^1([0,T];\mathbb{R}^3)$. Moreover, the solution $v(t)$ of \cref{eq:Lorenz-XY-Nudging-F} satisfies the bound
    \begin{align}
        \norm{v(t)}_{\ell^2}^2 \leq e^{-ct}\norm{v(0)}_{\ell^2}^2 + \frac{b(\rho+\sigma)^2 + (\mu_1 + \mu_2) K^2}{c},
    \end{align}
    for $t\in [0,T]$, where $c = \min\{2\sigma + \mu_1, 2 + \mu_2, b\}$.\\
    Similarly, the solution $\tilde{v}(t)$ of \cref{eq:Lorenz-XY-Nudging-B} enjoys the bound
    \begin{align}
        \norm{\tilde{v}(t)}_{\ell^2}^2 \leq e^{\tilde{c}(T-t)}\norm{\tilde{v}(T)}_{\ell^2}^2 + \frac{b(\rho+\sigma)^2 + (\mut_1 + \mut_2) K^2}{\tilde{c}}e^{\tilde{c}T},
    \end{align}
    for $t\in [0,T]$, where $\tilde{c} = \max\{2\sigma - \mut_1, 2 - \mut_2, 3b\}$.
\end{Thm}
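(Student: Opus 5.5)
Since the vector fields in \cref{eq:Lorenz-XY-Nudging-F} and \cref{eq:Lorenz-XY-Nudging-B} are polynomial in $v$ and depend continuously on $t$ through $u_1,u_2$, Picard--Lindel\"of gives a unique local $C^1$ solution in each case. I will extend it to all of $[0,T]$ by proving the stated a priori $\ell^2$ bounds, which rule out blow-up before time $T$. For the backward equation I will treat it as an initial value problem in the reversed time variable $s=T-t$.

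\textbf{Forward bound.} I will take the inner product of \cref{eq:Lorenz-XY-Nudging-F} with $v$. The nonlinear terms $-v_1v_2v_3+v_1v_2v_3$ cancel, but the linear cross terms $\sigma v_1v_2-\sigma v_1v_2$ also cancel only because of the sign convention written here. I will check this sign carefully, and if a cross term survives I will absorb it by Young's inequality. This yields
\begin{align*}
\tfrac12\tfrac{d}{dt}\norm{v}_{\ell^2}^2 + (\sigma+\mu_1)v_1^2 + (1+\mu_2)v_2^2 + b v_3^2 = -b(\rho+\sigma)v_3 + \mu_1u_1v_1+\mu_2u_2v_2 .
\end{align*}
I then apply Young's inequality to each right-hand term: $b(\rho+\sigma)|v_3|\le \tfrac b2 v_3^2+\tfrac b2(\rho+\sigma)^2$ and $\mu_iu_iv_i\le \tfrac{\mu_i}{2}v_i^2+\tfrac{\mu_i}{2}u_i^2$. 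Together with $|u_i|\le K$ from \cref{thm:Lorenz-uniqueness}, this gives
\begin{align*}
\tfrac{d}{dt}\norm{v}_{\ell^2}^2 + (2\sigma+\mu_1)v_1^2+(2+\mu_2)v_2^2+bv_3^2 \le b(\rho+\sigma)^2+(\mu_1+\mu_2)K^2 .
\end{align*}
Bounding the left coefficients below by $c$ and applying Gr\"onwall gives the claim. Note that $e^{-ct}$ times the constant is dropped in the favorable direction.

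\textbf{Backward bound.} I will set $w(s)=\tilde v(T-s)$, so that $w$ satisfies the time-reversed system with initial data $\tilde v(T)$. The energy identity now carries the opposite sign on the damping terms:
\begin{align*}
\tfrac12\tfrac{d}{ds}\norm{w}^2 = (\sigma-\tilde\mu_1)w_1^2+(1-\tilde\mu_2)w_2^2+bw_3^2 + b(\rho+\sigma)w_3 - \tilde\mu_1u_1w_1-\tilde\mu_2u_2w_2 .
\end{align*}
I will estimate the forcing terms by Young's inequality as before, adding $\tfrac b2w_3^2$ (hence the $3b$) and $\tfrac{\tilde\mu_i}{2}w_i^2$ plus constants. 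This gives $\tfrac{d}{ds}\norm{w}^2\le \tilde c\norm{w}^2 + b(\rho+\sigma)^2+(\tilde\mu_1+\tilde\mu_2)K^2$.

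Gr\"onwall then yields $\norm{w(s)}^2\le e^{\tilde cs}\norm{w(0)}^2+\frac{C}{\tilde c}(e^{\tilde cs}-1)$, with $C=b(\rho+\sigma)^2+(\tilde\mu_1+\tilde\mu_2)K^2$. Bounding $e^{\tilde c s}-1\le e^{\tilde cT}$ and returning to $t$ gives the claim. The $w_i^2$ coefficient after Young's inequality is $2\sigma-2\tilde\mu_1+\tilde\mu_1=2\sigma-\tilde\mu_1$, which matches the stated $\tilde c$.

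\textbf{Main obstacle.} The step I expect to require the most care is the bookkeeping of the non-skew linear part (the $-\sigma v_1 v_2$ terms) in the Lorenz form written here. If these terms do not cancel, I will absorb them via $\sigma|v_1v_2|\le \tfrac\sigma2(v_1^2+v_2^2)$, adjusting the constants accordingly. The constants $c$ and $\tilde c$ as stated suggest cancellation is intended. Global existence then follows from the bounds by the standard continuation argument.
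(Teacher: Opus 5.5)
Your proposal is correct and follows the route the paper indicates: classical ODE theory for local existence and uniqueness, then Young/Gr\"onwall energy estimates in forward and reversed time, continued to all of $[0,T]$ by the a priori bounds, and these give exactly the stated constants $c$ and $\tilde c$. Two small points: the hedge in your last paragraph is unnecessary, since the cross terms do cancel ($v_1\cdot\sigma v_2+v_2\cdot(-\sigma v_1)=0$); and the term $\tilde\mu_i u_i w_i$ in your reversed-time identity actually carries a $+$ sign, which is harmless because you bound it in absolute value.
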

The existence and uniqueness of solutions for \cref{eq:Lorenz-XY-Nudging-B,eq:Lorenz-XY-Nudging-F} follow from classical ODE theory. Solution bounds are obtained via elementary energy estimates based on the analysis in \cite{Peng_Wu_Shiue_2023}.

The modified BFN algorithm, \cref{eq:Lorenz-XY-Nudging-F,eq:Lorenz-XY-Nudging-B}, was previously applied to the Lorenz 1963 system in \cite{Peng_Wu_Shiue_2023}, where it was shown to achieve synchronization of initial data for the observed nudged variables (the first two components):
$$\lim_{n\to \infty} \left( \left(\tv^n_1(0) - u_1(0)\right)^2 + \left(\tv^n_2(0) - u_2(0)\right)^2\right) = 0,$$
where $\tv^n$ is the solution to \cref{eq:Lorenz-XY-Nudging-B} for $n = 1,2,...$. Notably, the recovery of $u_3(0)$ was not addressed in \cite{Peng_Wu_Shiue_2023} when $u_3(t)$ is not observed. We demonstrate here that partial observations of only the first two components of $u(t)$, as presented in \cref{eq:Lorenz-XY-Nudging-F,eq:Lorenz-XY-Nudging-B}, are insufficient to recover certain initial conditions for \cref{eq:Lorenz}.

\subsubsection{Analytical Discussion on the Failure of BFN} 
To illustrate why back-and-forth nudging should not be expected to be effective for partial observations (only the first two components of $u(t)$) of the Lorenz 1963 system, we will first begin with some elementary energy estimates of the difference between the exact reference and nudged solutions to \cref{eq:Lorenz-XY-Nudging-F,eq:Lorenz-XY-Nudging-B}.

Let $\delta = u - v$, where $u = \left(u_1,u_2,u_3\right)$ and $v = \left(v_1,v_2,v_3\right)$ are solutions to \cref{eq:Lorenz} and \cref{eq:Lorenz-XY-Nudging-F} (for $n=1$), respectively.

We note that $\delta$ solves the following system:
\begin{align}\label{eq:Lorenz-delta}
\begin{cases}
    \dot{\delta}_1 &= -\sigma \delta_1 + \sigma \delta_2 - \mu_1 \delta_1,\\
    \dot{\delta}_2 &= -\sigma \delta_1 - \delta_2 - u_1u_3 + v_1v_3 - \mu_2 \delta_2,\\
    \dot{\delta}_3 &= -b \delta_3 + u_1u_2 - v_1v_2.
    \end{cases}
\end{align}
Taking the Euclidean $\ell^2$ inner product of \cref{eq:Lorenz-delta} with $\delta$, we obtain, for $\varepsilon > 0$ to be chosen later, the following energy estimates:
\begin{align*}
    \frac{1}{2}\frac{d}{dt} \norm{\delta}^2_{\ell^2} &= (-\sigma - \mu_1 )\delta_1^2 + (-1 - \mu_2 )\delta_2^2 - b \delta_3^2 -\delta_1\delta_2 u_3 + \delta_1 u_2 \delta_3,\\
    &\leq (-\sigma - \mu_1 )\delta_1^2 + (-1 - \mu_2 )\delta_2^2 - b \delta_3^2 + \frac{K^2}{2}\delta_1^2 + \frac{K^2}{2}\delta_2^2 + \frac{K^2}{4\varepsilon}\delta_1^2 + \varepsilon\delta_3^2,\\
    &\leq (-\sigma - \mu_1 + \frac{K^2}{2} + \frac{K^2}{4\varepsilon})\delta_1^2 + (-1 - \mu_2 + \frac{K^2}{2})\delta_2^2 + (\varepsilon - b)\delta_3^2.
\end{align*}
We note that, above, we have used Young's inequality and the bound $K^2$ as given in \cref{thm:Lorenz-uniqueness}.
Now, for simplicity, we will assume the following lower bounds on $\mu_1$ and $\mu_2$:
\begin{align}\label{ineq:mu requirements}
\begin{cases}\mu_1 \geq b  + \frac{K^2}{2} + \frac{K^2}{4\veps} - \veps - \sigma,\\
\mu_2 \geq b + \frac{K^2}{2} - \veps -1.
\end{cases}
\end{align}
These bounds ensure that the global error is dominated by the $\delta_3^2$ term. Applying Gr\"onwall's inequality, we obtain:
\begin{align}\label{ineq:forward decay}
    \norm{\delta(T)}_{\ell^2}^2 \leq \norm{\delta(0)}_{\ell^2}^2e^{2(\veps - b)T}.
\end{align}
Thus, for a fixed $0<\veps < b$, we obtain exponential (in time for large $T$) convergence in $\ell^2$ of $v(t)$ to $u(t)$, provided the conditions in \cref{ineq:mu requirements} are satisfied. This is a standard result for the AOT algorithm, which guarantees the recovery of the reference solution $u$ of \cref{eq:Lorenz} asymptotically in time, i.e., $\lim\limits_{T\to \infty} \norm{u(T) - v(T)}_{\ell^2} = 0$.

We note that for any fixed $0<\veps < b$, we can choose $\mu_1$ and $\mu_2$ that satisfy \cref{ineq:mu requirements}, but as we send $\veps \to 0$, $\mu_1$ becomes unbounded.
We emphasize that these particular energy estimates require us to utilize the dissipation in the third component in \cref{eq:Lorenz,eq:Lorenz-XY-Nudging-F} to absorb a portion of the nonlinearity, as the nudged variables cannot adequately do so.

We now turn to performing similar estimates on the backward nudging algorithm, \cref{eq:Lorenz-XY-Nudging-B}. Let $\tilde{\delta} = u - \tilde{v}$, where $\tilde{v}$ is the solution to \cref{eq:Lorenz-XY-Nudging-B} from the first backward iteration ($n=1$). Using the substitution $\tau = T-t$, we can derive the following energy estimates, similar to our previous analysis:
\begin{align*}
    \frac{1}{2}\frac{d}{d\tau} \norm{\tilde{\delta}}^2_{\ell^2} &= (\sigma - \mut_1 )\tilde{\delta}_1^2 + (1 - \mut_2 )\tilde{\delta}_2^2 + b \tilde{\delta}_3^2 +\tilde{\delta}_1\tilde{\delta}_2 u_3 - \tilde{\delta}_1 u_2 \tilde{\delta}_3,\\
    &\leq (\sigma - \mut_1 )\tilde{\delta}_1^2 + (1 - \mut_2 )\tilde{\delta}_2^2 + b \tilde{\delta}_3^2 + \frac{K^2}{2}\tilde{\delta}_1^2 + \frac{K^2}{2}\tilde{\delta}_2^2 + \frac{K^2}{4\eta}\tilde{\delta}_1^2 + \eta\tilde{\delta}_3^2,\\
    &\leq (\sigma - \mut_1 + \frac{K^2}{2} + \frac{K^2}{4\eta})\tilde{\delta}_1^2 + (1 - \mut_2 + \frac{K^2}{2})\tilde{\delta}_2^2 +(b+\eta) \tilde{\delta}_3^2,
\end{align*}
Now, we assume that $\mut_1$ and $\mut_2$ satisfy the following lower bounds for some fixed $\eta > 0 \in \mathbb{R}$ to be determined later:
\begin{align}\label{eq:Gronwall-Lorenz}
\begin{cases}
    \mut_1 \geq \sigma + \frac{K^2}{2} + \frac{K^2}{4\eta},\\
    \mut_2 \geq 1 + \frac{K^2}{2}.
\end{cases}
\end{align}
Thanks to Gr\"onwall's inequality \cref{eq:Gronwall-Lorenz} ensures that $\tilde{\delta}$ satisfies the following estimate:
\begin{align}\label{ineq:backward growth}
    \norm{\tilde{\delta}(0)}_{\ell^2}^2\leq \norm{\tilde{\delta}(T)}_{\ell^2
    }^2e^{2(b+\eta)T}.
\end{align}

Now, noting that if we initialize the backward nudging algorithm \cref{eq:Lorenz-XY-Nudging-B} such that $\tilde{\delta}(T) = \delta(T)$, then by virtue of \cref{ineq:backward growth,ineq:forward decay} we obtain:
\begin{align}
    \norm{\tilde{\delta}(0)}_{\ell^2}^2 \leq \norm{\delta(0)}_{\ell^2}^2e^{2\left(\veps + \eta\right) T}.
\end{align}
Denoting $\tilde{\delta}^n = \tilde{v}^n - u$ for $n=1, 2,...$, we can iterate the above argument to obtain the estimate:
\begin{align}\label{ineq:backward growth iterated}
    \norm{\tilde{\delta}^n(0)}_{\ell^2}^2 \leq \norm{\delta(0)}_{\ell^2}^2 e^{2\left(\veps + \eta\right)nT}.
\end{align}

For positive $\eta$ and $\varepsilon$, the right-hand side of \cref{ineq:backward growth iterated} diverges, suggesting that the error may not decay regardless of the choice of $\tilde{\mu}_1$ and $\tilde{\mu}_2$. As discussed, the analytical difficulty at the energy level arises from the nonlinear terms, particularly within the third components of \cref{eq:Lorenz-XY-Nudging-F,eq:Lorenz,eq:Lorenz-XY-Nudging-B}.

To analytically recover the initial condition of the reference solution, it is insufficient to simply absorb the nonlinear terms into the nudged quantities, as the viscous term in the third component remains. Successful recovery would require the nonlinear terms to have a favorable sign to counteract the energy growth contributed by the destabilizing viscous term during the backward-in-time iterations. This is problematic in cases where the nonlinear terms vanish entirely, rendering recovery of the true initial state impossible.

Therefore, consider, for example, a reference solution to \cref{eq:Lorenz} of the form $(0,0,ae^{-bt} - (\rho + \sigma))$ for some $a \in \mathbb{R}$. It is easily verified that for solutions of this form the nonlinear coupling between components drops out completely. When only the first two components are observed, and if the nudged solution \cref{eq:Lorenz-XY-Nudging-F} is initialized with $\tilde{v}_0^0 = (0,0,z_0)$ for some $z_0\in \mathbb{R}$, both the nudging and nonlinear terms vanish in the forward and backward equations. Consequently, for all $n\in \mathbb{N}$:
\begin{align}
    \norm{\tilde{\delta}^n(0)}_{\ell^2}^2 = \norm{\delta(0)}_{\ell^2}^2 = \abs{z_0 - a + (\rho + \sigma)}^2.
\end{align}
This demonstrates that the BFN algorithm fails to recover the initial condition in this regime.

Another argument that supports this conclusion is via another related algorithm in data assimilation, the synchronization algorithm (also referred to as direct insertion, see, e.g. \cite{Olson_Titi_2003}):
\begin{align}\label{eq:Lorenz-Synch}
    \begin{cases}
    w_1(t) = u_1(t),\\
    w_2(t) = u_2(t), \\
    \dot{w_3} = -bw_3 + w_1w_2 - b(\rho+\sigma),\\
    (w_1(0),w_2(0),w_3(0)) = (u_1(0),u_2(0),w_0),
    \end{cases}
\end{align}
where $u(t)$ is the exact reference solution to \cref{eq:Lorenz}.
Formally, \cref{eq:Lorenz-Synch} is the infinite-gain limit of both of the forward and backward nudging systems \cref{eq:Lorenz-XY-Nudging-F,eq:Lorenz-XY-Nudging-B}.
By this we mean that for any fixed time, $t \in [0,T]$, the solutions, $v^n(t;\mu)$, $ \tv^n(t;\mu)$, to \cref{eq:Lorenz-XY-Nudging-F,eq:Lorenz-XY-Nudging-B}, respectively, are expected to converge to the solution $w(t)$ of the synchronization algorithm, given by \cref{eq:Lorenz-Synch} as $\mu,\mut \to \infty$, provided the initial and terminal conditions are chosen to coincide for each algorithm. 
This can be justified rigorously by applying the argument present in \cite{Carlson_Farhat_Martinez_Victor_2026_ISYNC} to the Lorenz 1963 system, provided both the synchronization and nudging equations, given by \cref{eq:Lorenz-Synch} and \cref{eq:Lorenz-XY-Nudging-F,eq:Lorenz-XY-Nudging-B}, respectively, are initialized with the same initial data. We note that the backward nudging algorithm \cref{eq:Lorenz-XY-Nudging-B} converges to the time-reversed synchronization dynamics, provided that the terminal condition is chosen to coincide with the forward nudging solution at time $T$.
We use \cref{eq:Lorenz-Synch} as an idealized point of comparison for BFN and AOT algorithms in the noise-free setting, where large nudging gains are typically used in both analysis and applications.

The barrier to recovery of the true initial state can be made even more clear by considering the synchronization algorithm, \cref{eq:Lorenz-Synch}. It is immediately clear that the synchronization algorithm does not recover the true initial state of the reference solution, as both the ``forward-in-time'' and ``backward-in-time'' evolution equations are the exact same and given by \cref{eq:Lorenz-Synch}. This means that the effect of the backwards nudging algorithm simply undoes the forward nudging algorithm, in the case when the observations are directly inserted into the nudged solution at each point in time.
As a consequence of this, the difficulties that arise in the BFN algorithm cannot be overcome by inflating the nudging parameters, and must be addressed in a different manner.

\subsubsection{Numerical Results - The Lorenz System}\label{sect:Lorenz:computations}
We now demonstrate and confirm our theoretical findings numerically through simulations of the classical 1963 Lorenz system. The system is integrated using a standard explicit fourth-order Runge-Kutta scheme (RK4) for the Lorenz terms, while the nudging terms are implemented explicitly. Notably, the nudging term cannot be implemented numerically using multi-stage methods such as RK4, as this would require temporal interpolation of the observations.

We fix the time step at $\Delta t = 10^{-5}$ over the time interval $[0,1]$. Both the integration scheme and the value of $\Delta t$ were chosen to ensure stability in the backward step equations \cref{eq:Lorenz-XY-Nudging-B}. While the dissipative term provides stability during forward integration step, it induces exponential growth in the backward equations, which can amplify small discretization errors.

To evaluate the effectiveness of data assimilation, we employ a standard ``identical twin'' experimental design. In this framework, we first generate a reference solution, which is treated as the true state to be recovered using observations. In all simulations detailed in this section, we set $\mu_1 = \mu_2 = \tilde{\mu}_1 = \tilde{\mu}_2$, denoted simply by $\mu$. The error is plotted over ``iteration time" to illustrate its evolution across both forward and backward iterations. Specifically, the interval $[n, n+1)$ represents the error for the $n$-th iteration, where even and odd $n$ correspond to forward and backward passes of the BFN algorithm, respectively.

In our implementation of \cref{eq:Lorenz-XY-Nudging-B}, we observed a minor loss of accuracy when running the equations backward in time due to the dissipative nature of \cref{eq:Lorenz}. For the simulated durations, the discretization error in the backward time-stepping was approximately $10^{-12}$. While this exceeds the machine epsilon for double precision ($\epsilon \sim 10^{-16}$), it remains well within the requirements for this study. Since the first two components of the solution $u$ of \cref{eq:Lorenz} serves as observational data, we record the entire time series of observations prior to running the BFN algorithm rather than simulating \cref{eq:Lorenz,eq:Lorenz-XY-Nudging-B,eq:Lorenz-XY-Nudging-F} simultaneously.

We now turn to the specific numerical tests. We consider a reference solution to \cref{eq:Lorenz} of the form:
\begin{align}\label{eq:Lorenz bad solution}
u(t) = \begin{pmatrix}
0\\
0\\
ae^{-bt} - (\rho+\sigma)
\end{pmatrix},
\end{align}
where $a \in \mathbb{R}$. For simplicity, we set $a = \rho+\sigma$, yielding $u(0) = \vec{0}$. The nudged solution is initialized with:
\begin{align}
\tilde{v}_0^0 = \begin{pmatrix}
0\\0\\\varphi
\end{pmatrix},
\end{align}
where $\varphi \in \mathbb{R}$.

As shown in \cref{fig:Lorenz-invariant}, the error between the reference solution and the BFN-generated solution (of \cref{eq:Lorenz-XY-Nudging-F,eq:Lorenz-XY-Nudging-B}) fails to vanish, i.e., to recover the initial data $u(0)$, for various choices of $\varphi$. For these trials, we set $\mu = 1000$. Because the nudging term vanishes in this specific case, the error is independent of the values of $\mu$ and $\tilde{\mu}$. The initial error decays exponentially during the forward iteration but returns to its original magnitude during the backward step.

In contrast to the specific case in \cref{eq:Lorenz bad solution}, the BFN algorithm successfully recovers the initial condition when the first two components of the initial data are non-zero. The exponential convergence rate appears highly sensitive to the parameters $b$, $\rho$, and $\sigma$. These results are presented in \cref{fig:Lorenz-beta,fig:Lorenz-rho,fig:Lorenz-sigma}. For these trials, we fix $\mu = 100$, $u_0 = (20,30,40)^T$, and $\tilde{v}_0^0 = (30,40,50)^T$. As seen in \cref{fig:Lorenz-sigma}, the number of iterations required to reach a fixed precision level increases with $\sigma$, whereas it decreases as $\rho$ or $b$ increases, as shown in \cref{fig:Lorenz-rho,fig:Lorenz-beta}.

\begin{figure}[t]
\centering

\begin{subfigure}[t]{0.49\textwidth}
\centering
\includegraphics[width=.9\linewidth]{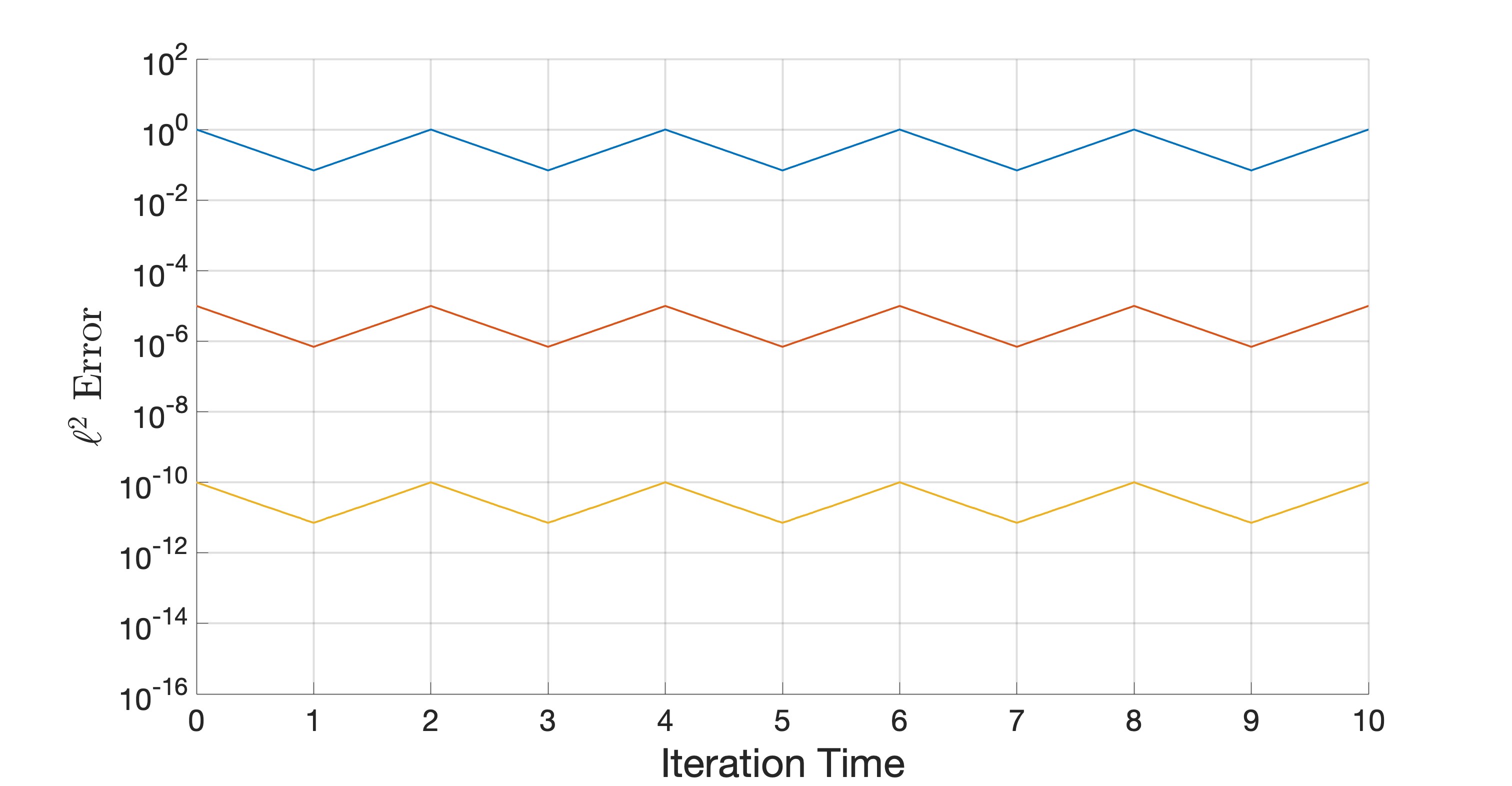}
\caption{\small Euclidean error for BFN with unobserved $u_3(0)$. Curves correspond to $\varphi=10^0$ (blue), $10^{-5}$ (red), and $10^{-10}$ (yellow).}
\label{fig:Lorenz-invariant}
\end{subfigure}
\hfill
\begin{subfigure}[t]{0.49\textwidth}
\centering
\includegraphics[width=.9\linewidth]{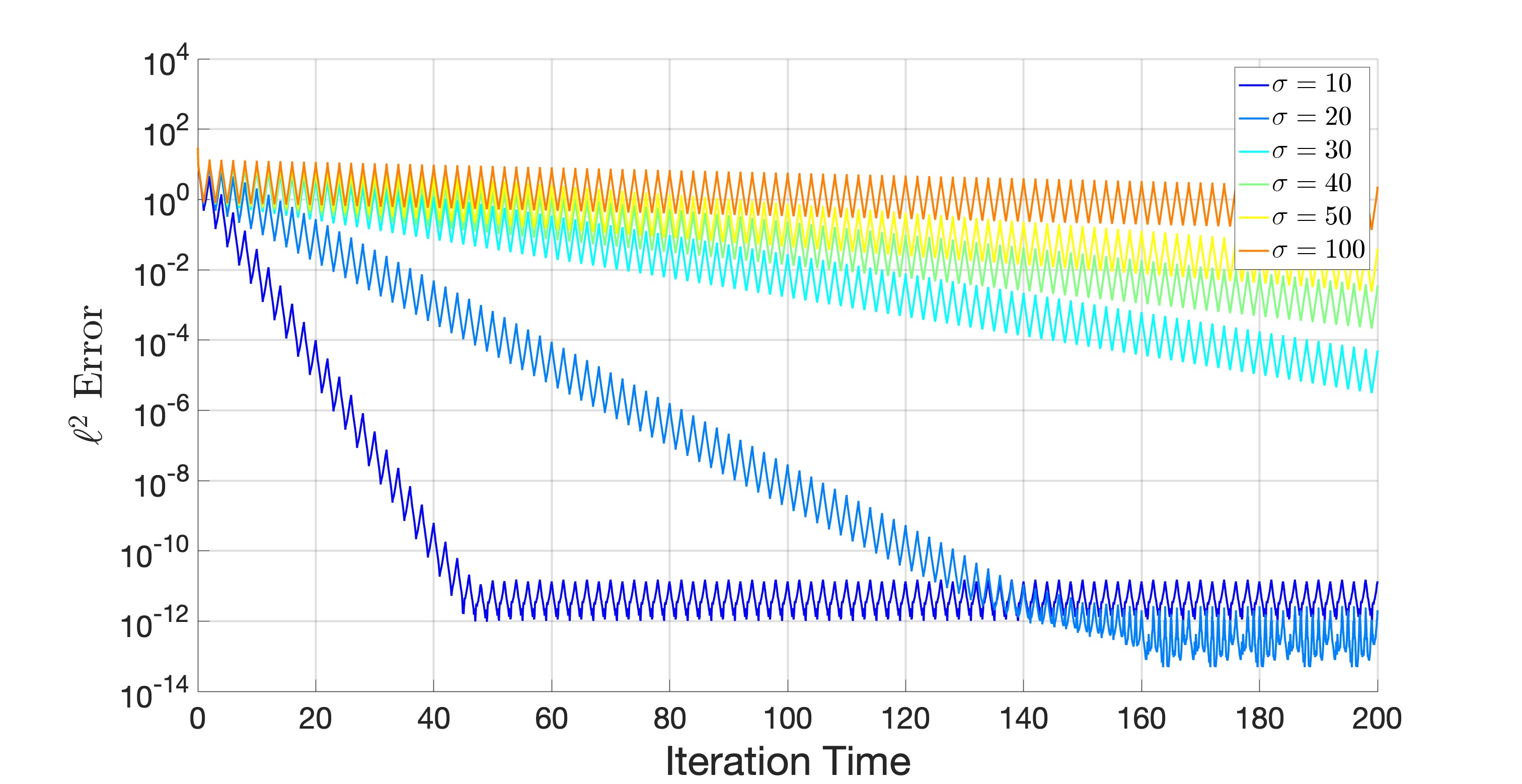}
\caption{\small Impact of $\sigma$ on error.}
\label{fig:Lorenz-sigma}
\end{subfigure}

\vspace{0.75em}

\begin{subfigure}[t]{0.49\textwidth}
\centering
\includegraphics[width=.9\linewidth]{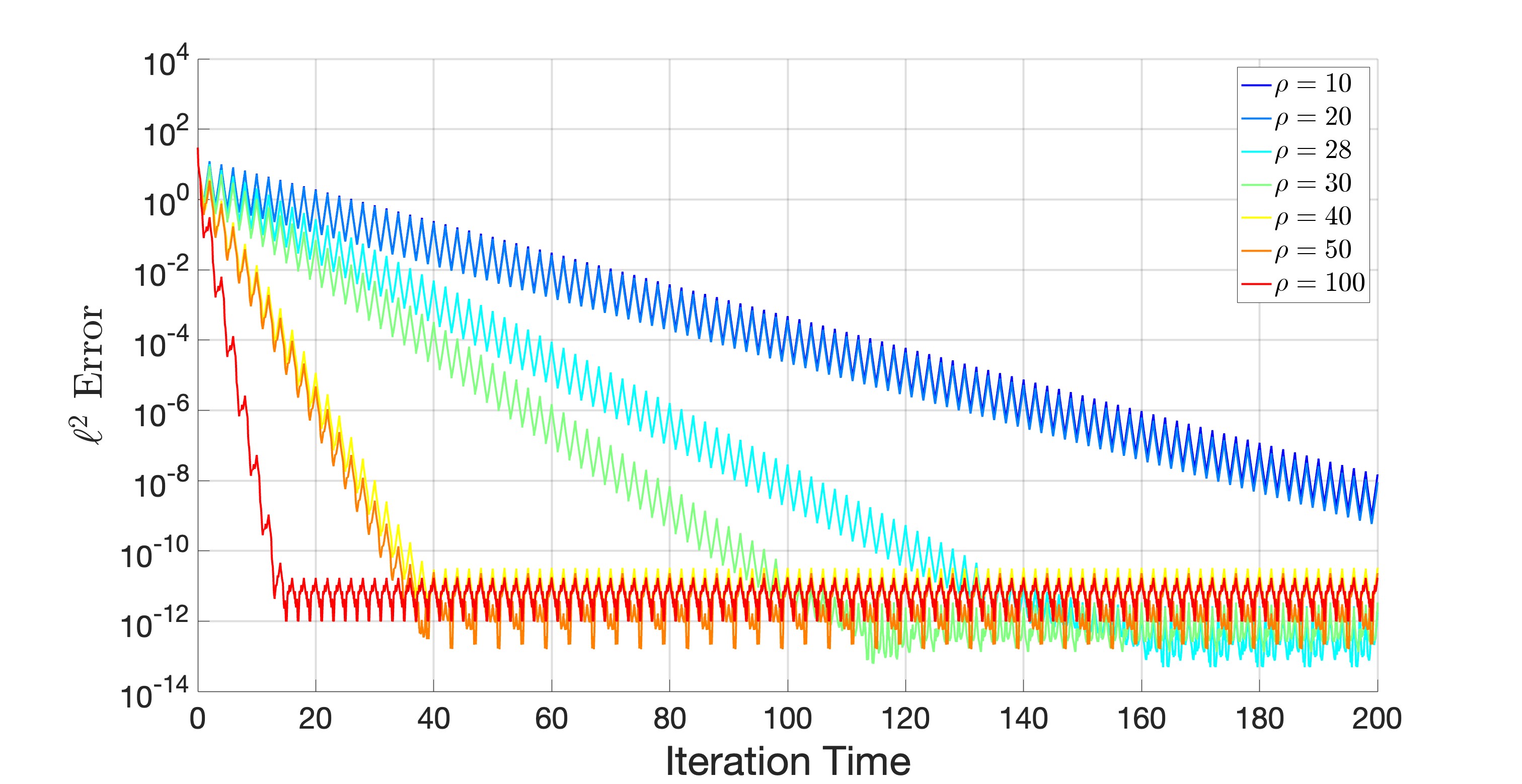}
\caption{\small Impact of $\rho$ on error.}
\label{fig:Lorenz-rho}
\end{subfigure}
\hfill
\begin{subfigure}[t]{0.49\textwidth}
\centering
\includegraphics[width=.9\linewidth]{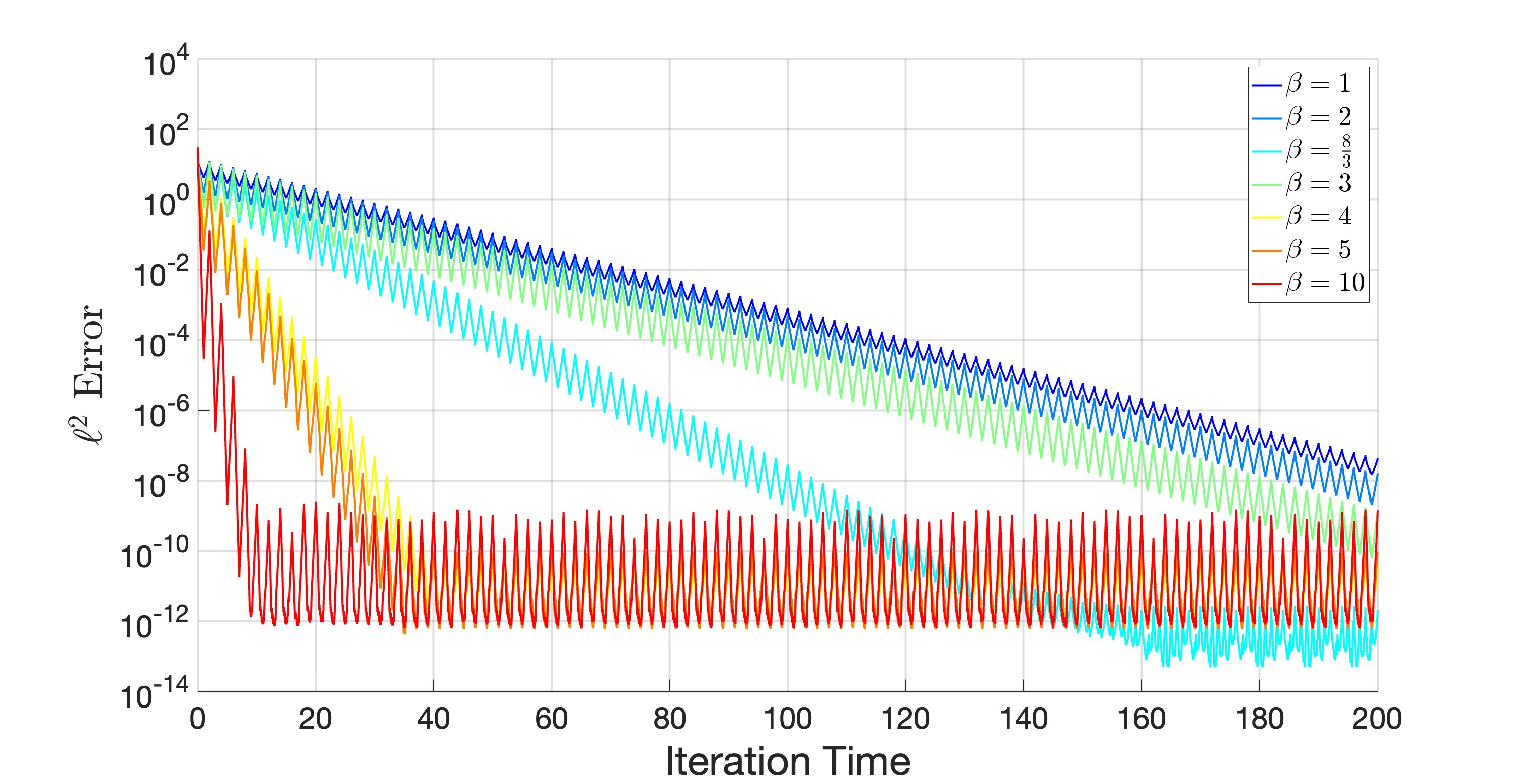}
\caption{\small Impact of $b$ on error.}
\label{fig:Lorenz-beta}
\end{subfigure}

\caption{\small BFN error behavior for the Lorenz 1963 system. }
\label{fig:Lorenz-panels}
\end{figure}

\subsection{BFN with Full Components Observations on Subintervals of Time}
We recall that \cite{Peng_Wu_Shiue_2023} investigated the convergence of the BFN algorithm for the Lorenz 1963 system. The authors demonstrated that for continuous, full-state observations, the algorithm in \cref{eq:Lorenz-XYZ-Nudging-full-F,eq:Lorenz-XYZ-Nudging-full-B} converges provided the nudging parameter is sufficiently large, depending on the bounds of the true solution. We note that in this full-observation setting, data assimilation is redundant to reconstruct the initial condition, as the complete state is already known for all $t\in [0,T]$ and in particular initially. Nevertheless, we follow the work of \cite{Peng_Wu_Shiue_2023} to verify our assertions and evaluate the BFN algorithm. In the discrete-time case, convergence was proven under the condition that the assimilation time step is sufficiently small; notably, a smaller time step required a larger nudging parameter, which may increase the overall error.

We now consider the case where components of the full true solution $u(t)$ of \cref{eq:Lorenz} is observed on different sub-intervals of the time interval $[0,T]$, an experimental setup similar to that in \cite{Peng_Wu_Shiue_2023}. We include this case to provide a comprehensive critique of the BFN algorithm. This regime is of particular interest because observational data in practical applications is often incomplete, featuring missing components or temporal gaps. Specifically, we study the forward and backward nudged systems:
\begin{align}
(F) &
\begin{cases}\label{eq:Lorenz-XYZ-Nudging-F}
\dot{v}_1^n = -\sigma v_1^n + \sigma v_2^n + \chi_{1,\gamma_{obs}}(t)\mu_1(u_1 - v_1^n),\\
\dot{v}_2^n = -\sigma v_1^n - v_2^n - v_1^n v_3^n + \chi_{2,\gamma_{obs}}(t)\mu_2(u_2 - v_2^n), \\
\dot{v}_3^n = -b v_3^n + v_1^n v_2^n - b(\rho+\sigma) + \chi_{3,\gamma_{obs}}(t)\mu_3(u_3 - v_3^n),\\
\left(v_1^n(0),v_2^n(0),v_3^n(0)\right) = \tilde{v}_0^{n-1},
\end{cases}\\
(B) &
\begin{cases}\label{eq:Lorenz-XYZ-Nudging-B}
\dot{\tilde{v}}_1^n = -\sigma \tilde{v}_1^n + \sigma \tilde{v}_2^n - \chi_{1,\gamma_{obs}}(t)\tilde{\mu}_1(u_1 - \tilde{v}_1^n),\\
\dot{\tilde{v}}_2^n = -\sigma \tilde{v}_1^n - \tilde{v}_2^n - \tilde{v}_1^n \tilde{v}_3^n - \chi_{2,\gamma_{obs}}(t)\tilde{\mu}_2(u_2 - \tilde{v}_2^n), \\
\dot{\tilde{v}}_3^n = -b \tilde{v}_3^n + \tilde{v}_1^n \tilde{v}_2^n - b(\rho+\sigma) - \chi_{3,\gamma_{obs}}(t)\tilde{\mu}_3(u_3 - \tilde{v}_3^n),\\
\left(\tilde{v}_1^n(T),\tilde{v}_2^n(T),\tilde{v}_3^n(T)\right) = v_T^{n-1},
\end{cases}
\end{align}
where $\chi_{i,\gamma_{obs}}$ is the indicator function of the time subinterval $\left[\tfrac{i-1}{3}, \tfrac{(i-1)+\gamma_{obs}}{3}\right)$ for $i = 1,2,3$. Here, $\gamma_{obs}\in[0,1]$ represents the fraction of each sub-interval over which observations are available. Specifically, the unit interval $[0,1]$ is divided into thirds: the first, second, and third components of $u(t)$ are assimilated in the windows $[0,\tfrac{1}{3})$, $[\tfrac{1}{3},\tfrac{2}{3})$, and $[\tfrac{2}{3},1)$, respectively. Assimilation is restricted to a portion of each sub-interval with a relative length of $\gamma_{obs}$. For example, if $\gamma_{obs}=\tfrac{1}{2}$, the components are assimilated on $[0,\tfrac{1}{6})$, $[\tfrac{2}{6},\tfrac{3}{6})$, and $[\tfrac{4}{6},\tfrac{5}{6})$. While the nudging coefficients $\mu_i$ and $\tilde{\mu}_i$ can be set independently, all simulations presented here use $\mu_i = \tilde{\mu}_i = \mu$ for $i = 1,2,3$ for a constant $\mu>0$.

The Euclidean error for $\gamma_{obs} = 1$ is shown in \cref{fig:Lorenz-discrete}. As illustrated, the initial condition is recovered for sufficiently large $\mu$. However, the recovery process requires more iterations or fails entirely for certain values of $\gamma_{obs}$, as shown in \cref{fig:Lorenz-discrete-gaps}. Specifically, for $\gamma_{obs} \in [0, \tfrac{1}{2}]$, the BFN algorithm \cref{eq:Lorenz-XY-Nudging-F,eq:Lorenz-XY-Nudging-B}fails to recover the initial condition or provide significant improvement over the initial guess $\tilde{v}_0^0$.

\begin{figure}[t]
\centering

\begin{subfigure}[t]{0.49\textwidth}
\centering
\includegraphics[width=.9\linewidth]{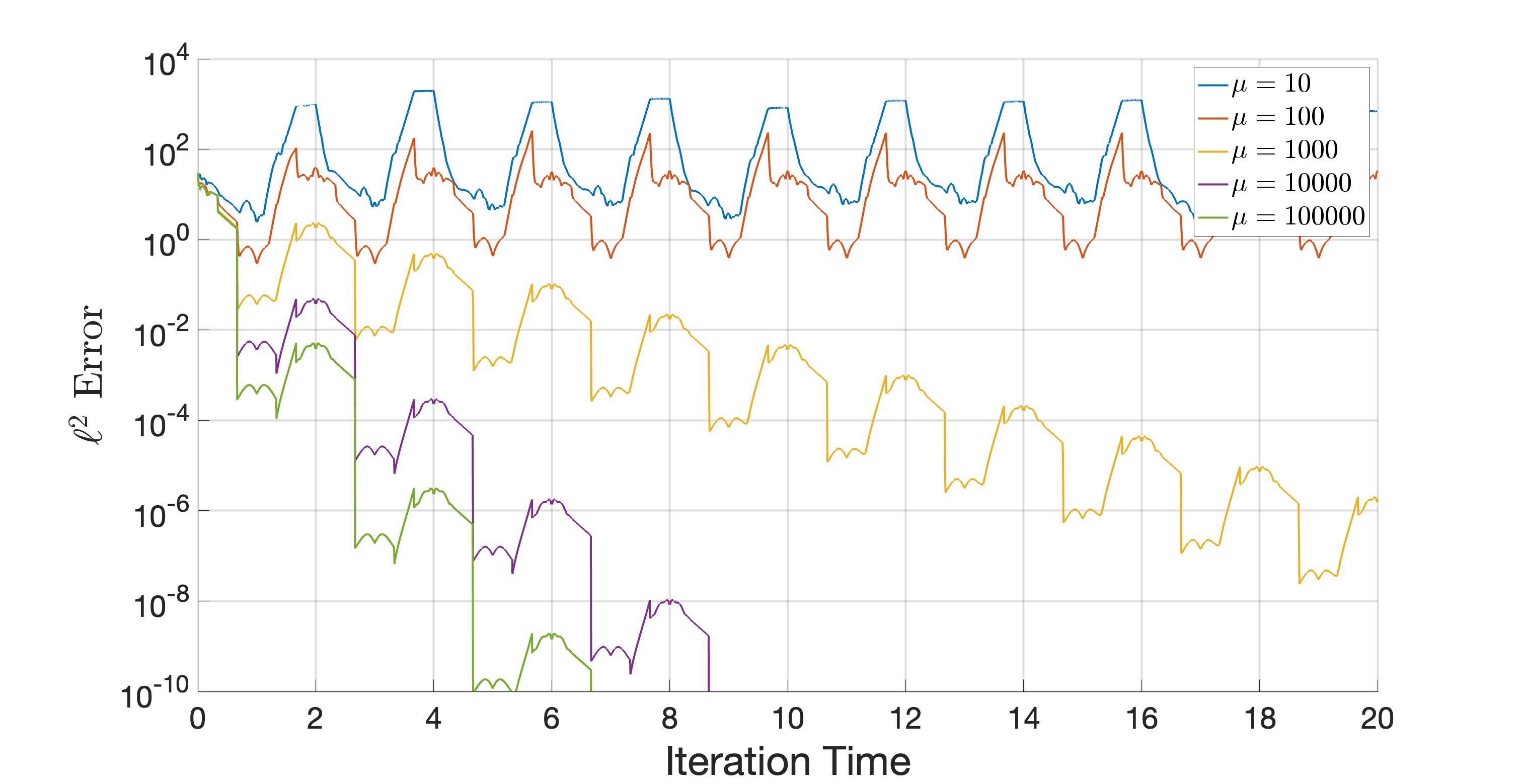}
\caption{\small Error for varying nudging strengths $\mu$ under full observation ($\gamma_{obs}=1$).}
\label{fig:Lorenz-discrete}
\end{subfigure}
\hfill
\begin{subfigure}[t]{0.49\textwidth}
\centering
\includegraphics[width=.9\linewidth]{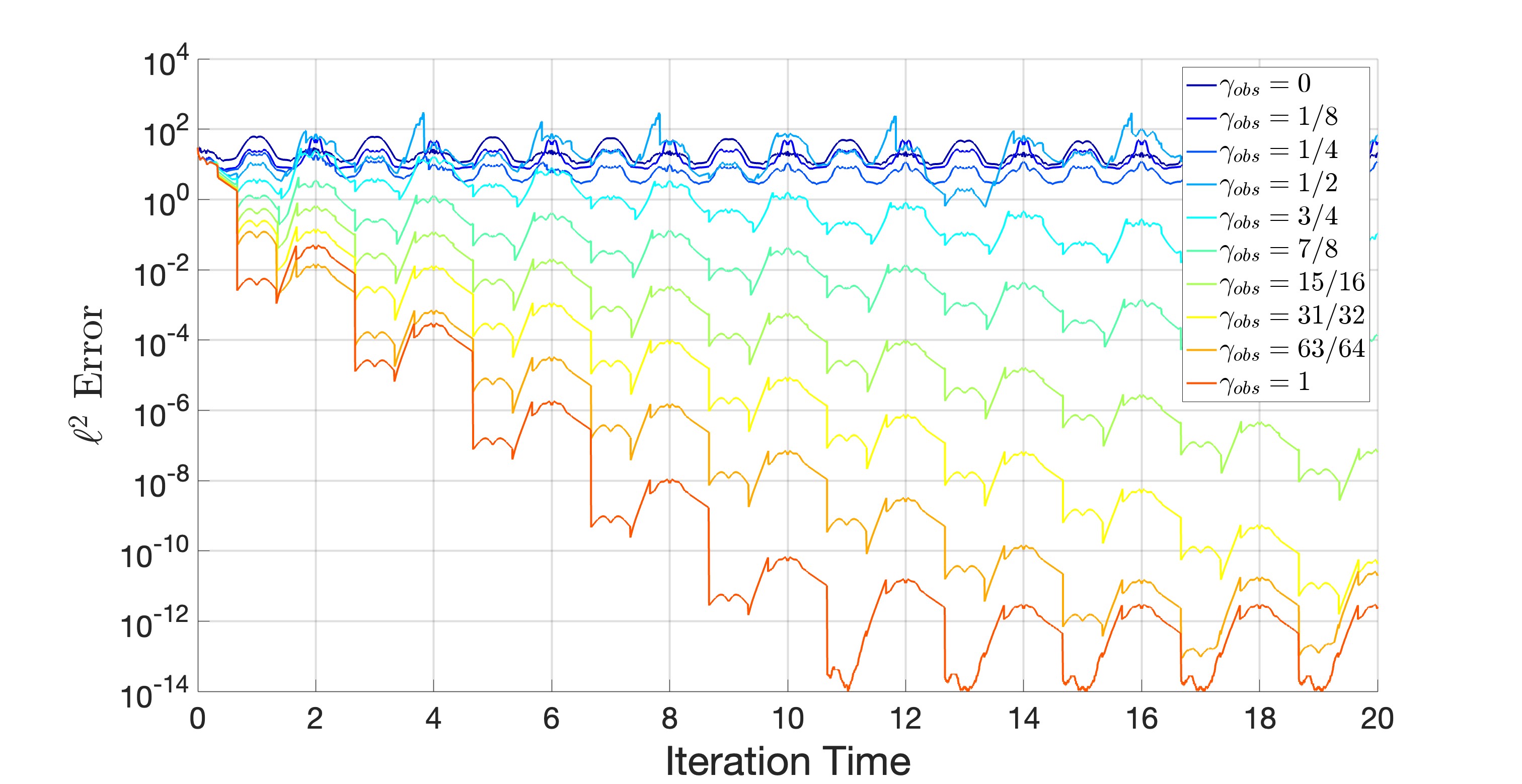}
\caption{\small Error for varying observation fractions $\gamma_{obs}$ with fixed $\mu=10{,}000$.}
\label{fig:Lorenz-discrete-gaps}
\end{subfigure}
\caption{\small BFN error for the Lorenz 1963 system.}
\label{fig:Lorenz-discrete-panels}
\end{figure}


\section{1D PDEs Paradigm}\label{sect:1D}
In this section, we outline specific counterexamples that demonstrate the inefficacy of the BFN algorithm when applied to partial observations of solutions to 1D PDEs. We first consider the 1D heat equation to illustrate the failure of the BFN algorithm in linear dissipative systems. We also examine the viscous transport and viscous Burgers equations, as well as their inviscid counterparts, which were the subject of previous analytical work in \cite{Auroux_Nodet_2012}.

The BFN algorithm and its diffusive variant, dBFN, have been studied in other contexts, such as shallow-water equations and various ocean models. We will consider the dBFN variant in \cref{sect:NSE} regarding the recovery of solutions to the 2D incompressible Navier-Stokes equations. However, the present discussion in this section focuses on the 1D models as they are most relevant to our core results.

Specifically, we show that the BFN algorithm fails to recover the initial condition for each of these equations when observations are restricted to a finite number of the first Fourier modes. We examine the linear and nonlinear cases separately in \cref{sect:linear,sect:heat} and \cref{sect:burgers}. Before presenting these results, we outline the notation and preliminaries used throughout the section.
\subsection{Preliminaries for 1D PDEs}\label{sect:1D:preliminaries}
Consider the generic evolution system \cref{eq:generic system}, where $F$ is a differential operator that may be nonlinear or nonlocal. We assume the system is well-posed on the time interval $[0,T]$ and in the spatial domain $\mathbb{R}$ subject to $L$-periodic boundary conditions ($L = 2\pi$) with initial condition $u_0$.

In this section, we utilize the Sobolev space $H^m_{\text{per}}$ for non-negative integers $m$, defined as the closure of $L$-periodic trigonometric polynomials under the norm:
\begin{equation}
\norm{f}_{H^{m}_{\text{per}}} := \left(\sum_{k = 0}^m \int_0^L \abs{\frac{\partial^k f}{\partial x^k} }^2 dx\right)^{\frac{1}{2}}.
\end{equation}
Note that $H^0_{\text{per}}$ corresponds to $L^2_{\text{per}}$, the space of $L$-periodic square-integrable functions. We assume solutions to \cref{eq:generic system} lie in $L^2_{\text{per}}$ for $t \in [0,T]$ and specifically examine the following operators for $F$:
\begin{align}
\text{Heat:}\quad & F(u) = \nu u_{xx} + f,\quad \nu>0, \quad f \in L^2_{\text{per}},\\
\text{Viscous Transport:}\quad & F(u) = \nu u_{xx} - a u_x,\quad a>0,\quad \nu\geq 0,\\
\text{Viscous Burgers:}\quad & F(u) = \nu u_{xx} - u u_x,\quad \nu\geq 0.
\end{align}
While the viscous cases are globally well-posed for all $T>0$, the inviscid case ($\nu = 0$) for the Burgers equation requires the \textit{a priori} assumption that the solution $u$ remains shock-free on $[0,T]$. Although the unforced heat equation is a special case of the transport equation ($a=0, \nu>0$), we treat it separately as an illustrative example of BFN failure for linear dissipative PDEs.

For the interpolation operator $I_h$ in \cref{eq:BFN Forward,eq:BFN Backward}, we set $I_h(g) := P_M g$, where $P_M$ is the projection onto the first $M$ Fourier modes of $g\in L^2_{\text{per}}$. Let $Q_M = I - P_M$ denote the orthogonal projection onto the remaining infinitely many Fourier modes. The BFN algorithm iteratively solves the following forward (F) and backward (B) nudging equations:
\begin{align}
\text{(F)}\quad &
\begin{cases}
v_t^n = F(v^n) + \mu P_M(u - v^n),\\
v^n(0) = \tilde{v}^{n-1}(0),
\end{cases}\label{eq:BFN_forward}\\[1ex]
\text{(B)}\quad &
\begin{cases}
\tilde{v}_t^n = F(\tilde{v}^n) - \mu P_M(u - \tilde{v}^n),\\
\tilde{v}^n(T) = v^n(T),
\end{cases}\label{eq:BFN_backward}
\end{align}
for $n = 1,2, \dots$, with the algorithm initialized by choosing $v^1(0) = v_0$. 

This iterative procedure aims to reconstruct $u(0)$ from partial (the first $M$) Fourier observations. A fundamental challenge of the BFN algorithm is the potential ill-posedness of the backward equation \cref{eq:BFN_backward}. For the purposes of this analysis, we assume the existence and uniqueness of solutions to \cref{eq:BFN_forward,eq:BFN_backward}. Finally, for $g \in L^2_{\text{per}}$, we denote the Fourier coefficients as:
\begin{align}
\hat{g}(k,t):= \int_{0}^L g(x,t) e^{-\frac{2\pi}{L}ik x}dx,
\end{align}
and denote the indicator function for the set $\{k\in \mathbb{Z}:\abs{k}\leq M\}$ by $\chi_M$.

\subsection{Heat Equation}\label{sect:heat}
We now apply the BFN algorithm to the heat equation \cref{eq:heat}. The reference system and the corresponding BFN equations are:
\begin{align}
&\begin{cases}\label{eq:heat}
    u_t = \nu u_{xx} + f,\\
    u(0) = u_0,
\end{cases}\\
\text{(F)}\quad &
\begin{cases}
    v_t^n = \nu v^n_{xx} + \mu P_M(u - v^n) + f,\\
    v^n(0) = \tilde{v}^{n-1}(0),
\end{cases}\label{eq:heat:F}\\
\text{(B)}\quad &
\begin{cases}
    \tilde{v}_t^n = \nu \tilde{v}^n_{xx} - \tilde{\mu} P_M(u - \tilde{v}^n) + f,\\
    \tilde{v}^n(T) = v^n(T),
\end{cases}\label{eq:heat:B}
\end{align}
for $n = 1,2, \dots$.

\begin{Prop}\label{prop:heat}
Let $u$ be the solution to \cref{eq:heat} with initial data $u_0$ given by a trigonometric polynomial of degree $K \in \mathbb{N}$, and let $M \in \mathbb{N}$. Then the sequence of solutions $v^n, \tilde{v}^n$ to \cref{eq:heat:F,eq:heat:B}, initialized with a trigonometric polynomial $v_0$ of degree $K$, satisfies:
\begin{align}
    \lim_{n\to\infty} \norm{v^n(0) - u_0}_{L^2_{\text{per}}} = \norm{Q_M (v_0 - u_0)}_{L^2_{\text{per}}}.
\end{align}
\end{Prop}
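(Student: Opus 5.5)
Because the heat equation is linear with constant coefficients and the nudging term $P_M$ is a Fourier multiplier, the plan is to pass to Fourier coefficients and track the error $e^n := v^n - u$, $\tilde{e}^n := \tilde{v}^n - u$ mode by mode. Subtracting \cref{eq:heat} from \cref{eq:heat:F,eq:heat:B} removes the forcing $f$. For each $k\in\mathbb{Z}$ the error coefficients then satisfy the scalar ODEs
\begin{align*}
\partial_t \hat{e}^n(k,t) = -\left(\nu \tfrac{4\pi^2 k^2}{L^2} + \mu\chi_M(k)\right)\hat{e}^n(k,t), \qquad
\partial_t \hat{\tilde{e}}^n(k,t) = -\left(\nu \tfrac{4\pi^2 k^2}{L^2} - \tilde{\mu}\chi_M(k)\right)\hat{\tilde{e}}^n(k,t).
\end{align*}
The data are coupled by $\hat{\tilde{e}}^n(k,T)=\hat{e}^n(k,T)$ and $\hat{e}^{n}(k,0)=\hat{\tilde{e}}^{n-1}(k,0)$. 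Since $u_0$ and $v_0$ are trigonometric polynomials of degree $K$, only the finitely many modes $|k|\le K$ are nonzero, and each evolves independently. The backward problem is therefore trivially well-posed here: it is a finite system of linear ODEs, which disposes of the ill-posedness issue raised in \cref{sect:1D:preliminaries}.

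Next, compose one forward and one backward pass explicitly. Writing $\lambda_k = \nu 4\pi^2k^2/L^2$, the forward pass multiplies $\hat{e}(k,0)$ by $e^{-(\lambda_k+\mu\chi_M(k))T}$. Solving the backward ODE from $T$ down to $0$ multiplies by $e^{(\lambda_k-\tilde{\mu}\chi_M(k))T}$. One full BFN cycle therefore acts on mode $k$ as multiplication by
$$r_k = e^{-(\mu+\tilde{\mu})\chi_M(k) T}.$$
For $|k|>M$ we get $r_k=1$: the diffusion is undone exactly and these coefficients of the error never change. For $|k|\le M$ we get $r_k = e^{-(\mu+\tilde\mu)T}<1$ (assuming $\mu+\tilde\mu>0$), so after $n$ cycles those modes decay like $r_k^n\to 0$.

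Finally, apply Parseval and sum over the finitely many modes $|k|\le K$. This gives $\|P_M(\tilde{v}^n(0)-u_0)\|_{L^2_{\text{per}}}\to 0$ while $Q_M(\tilde{v}^n(0)-u_0)=Q_M(v_0-u_0)$ for all $n$. Since $v^{n+1}(0)=\tilde{v}^n(0)$, the limit of $\|v^n(0)-u_0\|$ equals $\|Q_M(v_0-u_0)\|_{L^2_{\text{per}}}$.

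There is no serious obstacle. The only points needing care are the index bookkeeping between $v^n(0)$ and $\tilde{v}^{n-1}(0)$, and the sign conventions in the backward equation: one must verify that $-\tilde{\mu}P_M(u-\tilde{v})$ yields decay of the low modes in backward time rather than growth. The finite-degree assumption is what lets us avoid justifying the backward heat flow on general $L^2$ data.
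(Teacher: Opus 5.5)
Your proposal is correct and follows essentially the same route as the paper. Both arguments do a mode-by-mode Fourier computation of the error and show that one forward--backward cycle multiplies mode $k$ by $e^{-(\mu+\tilde\mu)\chi_M(k)T}$, so the observed modes decay geometrically while the unobserved modes are left exactly unchanged. Your remarks on the cancellation of $f$ in the error equation, the sign check in the backward step, and the need for $\mu+\tilde\mu>0$ make explicit points the paper leaves implicit.
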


\begin{proof}
The result follows from the linearity of the heat equation, where Fourier modes evolve independently. Define the errors $w^n = u - v^n$ and $\tilde{w}^n = u - \tilde{v}^n$. In terms of Fourier coefficients, the error evolution for the first iteration is given by:
\begin{align}
\hat{w}^1(k,t) &= e^{\left(-\nu k^2 - \mu\chi_M(k)\right)t }\hat{w}^1(k,0),\\
\hat{\tilde{w}}^1(k,t) &= e^{\left(\nu k^2 - \tilde{\mu}\chi_M(k)\right)(T-t)}\hat{\tilde{w}}^1(k,T),
\end{align}
for $0 \le \abs{k} \le K$. Consequently, the error at the end of the first full iteration satisfies:
\begin{align}
    \hat{\tilde{w}}^1(k,0) = \begin{cases}
                    e^{-(\mu + \tilde{\mu})T}\hat{w}^1(k,0) &  0 \leq \abs{k}\leq M,\\
                    \hat{w}^1(k,0) & \abs{k} > M.
                    \end{cases}
\end{align}
Iterating this process $n$ times yields:
\begin{align}
    \hat{w}^n(k,0) = \begin{cases}
                    e^{-(\mu + \tilde{\mu})(n-1)T}\hat{w}^1(k,0) & 0 \leq \abs{k}\leq M,\\
                    \hat{w}^1(k,0) & \abs{k} > M.
                    \end{cases}
\end{align}
As $n \to \infty$, the error in the observed modes ($0\leq |k| \le M$) vanishes, while the error in the unobserved modes ($|k| > M$) remains constant, proving the proposition.
\end{proof}

\cref{prop:heat} demonstrates that the BFN algorithm, \cref{eq:heat:F,eq:heat:B}, fails to recover the initial condition if $u_0$ contains unobserved modes (i.e., $Q_M u_0 \neq 0$). While BFN succeeds if $P_M u_0 = u_0$, i.e., when $M\ge K$, the algorithm in this case is redundant as the initial condition is already known. Furthermore, if the algorithm, \cref{eq:heat:F,eq:heat:B}, is initialized with $v_0 = P_M u_0$, the error in the unobserved high-frequency Fourier modes remains invariant across all iterations. This failure is a direct consequence of the linearity of the heat equation: high-frequency modes evolve independently of observed low-frequency modes, rendering them fundamentally unrecoverable by any data assimilation method relying solely on $P_M u$.

\subsection{Linear Transport Equation}\label{sect:linear}
We now consider the linear transport equation \cref{eq:transport} with constant velocity $a > 0$ and viscosity $\nu \geq 0$ on a periodic 1D domain. The reference system and the corresponding BFN equations are:
\begin{align}
&\begin{cases}\label{eq:transport}
    u_t = \nu u_{xx} - a u_x,\\
    u(0) = u_0,
\end{cases}\\
\text{(F)}\quad &
\begin{cases}
    v_t^n = \nu v^n_{xx} - av^n_x  + \mu P_M(u - v^n),\\
    v^n(0) = \tilde{v}^{n-1}(0),
\end{cases}\label{eq:transport:F}\\
\text{(B)}\quad &
\begin{cases}
    \tilde{v}_t^n = \nu \tilde{v}^n_{xx} - a\tilde{v}^n_x - \tilde{\mu} P_M(u - \tilde{v}^n),\\
    \tilde{v}^n(T) = v^n(T),
\end{cases}\label{eq:transport:B}
\end{align}
where $n = 1,2,\dots$. Similar to the heat equation, the linear transport equation allows the Fourier modes to evolve independently, leading to an analogous application of the BFN algorithm.

\begin{Prop}\label{prop:transport}
Let $u$ be the solution to \cref{eq:transport} with initial data $u_0$ given by a trigonometric polynomial of degree $K \in \mathbb{N}$, and let $M \in \mathbb{N}$ such that $M \leq K$. The sequence of solutions $v^n, \tilde{v}^n$ to \cref{eq:transport:F,eq:transport:B}, initialized with a trigonometric polynomial $v_0$ of degree $K$, satisfies:
\begin{align}
    \lim_{n\to\infty} \norm{v^n(0) - u_0}_{L^2_{\text{per}}} = \norm{Q_M (v_0 - u_0)}_{L^2_{\text{per}}}.
\end{align}
\end{Prop}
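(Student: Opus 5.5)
The argument mirrors the proof of \cref{prop:heat}. Since \cref{eq:transport,eq:transport:F,eq:transport:B} are linear with constant coefficients on the periodic domain, the Fourier modes decouple. The error equations are diagonal in $k$. We define the errors $w^n = u - v^n$ and $\tilde w^n = u - \tilde v^n$ and subtract the nudged equations from \cref{eq:transport}. This gives
\begin{align*}
\partial_t \hat w^n(k,t) &= \left(-\nu k^2 - iak - \mu\chi_M(k)\right)\hat w^n(k,t),\\
\partial_t \hat{\tilde w}^n(k,t) &= \left(-\nu k^2 - iak + \tilde\mu\chi_M(k)\right)\hat{\tilde w}^n(k,t),
\end{align*}
with $\hat w^n(k,0) = \hat{\tilde w}^{n-1}(k,0)$ and $\hat{\tilde w}^n(k,T) = \hat w^n(k,T)$. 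We use the convention $\hat{\tilde w}^0(\cdot,0) = \widehat{u_0 - v_0}$ so that the first forward step is included. Because $u_0$ and $v_0$ are trigonometric polynomials of degree $K$, all errors are supported in $\abs{k}\le K$ for all times. So we work with finitely many scalar linear ODEs, and the backward problem is trivially well-posed: it is an exact time reversal on a finite-dimensional space. This answers the ill-posedness concern raised in \cref{sect:1D:preliminaries}.

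Next, solve each scalar ODE explicitly. The forward step gives $\hat w^n(k,T) = e^{(-\nu k^2 - iak - \mu\chi_M(k))T}\hat w^n(k,0)$. The backward step, solved from $T$ down to $0$, gives $\hat{\tilde w}^n(k,0) = e^{(\nu k^2 + iak - \tilde\mu\chi_M(k))T}\hat w^n(k,T)$. Composing the two, the diffusive factors $e^{\mp\nu k^2 T}$ cancel exactly, and so do the oscillatory transport factors $e^{\mp iakT}$. What remains is
\[
\hat{\tilde w}^n(k,0) = e^{-(\mu+\tilde\mu)\chi_M(k)T}\,\hat w^n(k,0).
\]
Iterating, $\hat w^{n}(k,0) = e^{-(\mu+\tilde\mu)(n-1)T}\widehat{(u_0-v_0)}(k)$ for $\abs{k}\le M$, and $\hat w^n(k,0) = \widehat{(u_0-v_0)}(k)$ for $M<\abs{k}\le K$.

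Finally, we apply Parseval's identity, which is valid up to the normalisation constant $L$ that appears on both sides. As $n\to\infty$ the observed modes vanish, because $\mu+\tilde\mu>0$ (at least one gain is positive, which is implicit in the setup). The unobserved modes are fixed for all $n$. Hence $\norm{v^n(0)-u_0}_{L^2_{\text{per}}}\to\norm{Q_M(v_0-u_0)}_{L^2_{\text{per}}}$. The hypothesis $M\le K$ only ensures that the unobserved band $M<\abs{k}\le K$ may be nonempty; the identity holds regardless. The main point to check carefully is the exact cancellation of the complex transport phase $e^{-iakT}$ between the forward and backward passes. This works because the backward equation uses the same operator $F$, with only the nudging sign reversed. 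The cancellation holds for both $\nu>0$ and $\nu=0$, so the inviscid case is covered by the same computation. There is no real obstacle beyond this bookkeeping.
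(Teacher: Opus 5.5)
Your proof is correct and follows the paper's route: you decouple the Fourier modes, solve the forward and backward scalar error ODEs explicitly, observe that the diffusive and transport factors cancel under composition to leave $e^{-(\mu+\tilde\mu)\chi_M(k)T}$, and iterate, exactly as in the proof of \cref{prop:heat}. You also write out the composition and iteration steps that the paper leaves implicit, and you rightly note that $\mu+\tilde\mu>0$ (and $T>0$) is needed for the conclusion, which sharpens the stated hypotheses.
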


\begin{proof}
Following the argument in the proof of \cref{prop:heat}, the Fourier coefficients of the error satisfy:
\begin{align}
\hat{w}(k,t) &= e^{\left(-\nu k^2 - \mu\chi_M(k) - i a k\right)t }\hat{w}(k,0),\\
\hat{\tilde{w}}(k,t) &= e^{\left(\nu k^2 - \tilde{\mu}\chi_M(k) + i a k\right)(T-t)}\hat{\tilde{w}}(k,T),
\end{align}
for $0\le \abs{k} \le K$. Iterating this result shows that the error in unobserved modes ($|k| > M$) remains unchanged across iterations.
\end{proof}

We conclude that the BFN algorithm, \cref{eq:transport:F,eq:transport:B}, fails to recover the initial condition for the linear transport equation if the initial data is not fully observed ($P_M u_0 \neq u_0$). Despite continuous-in-time noise-free observations, the algorithm neither recovers the full initial condition nor improves upon the initial observation of unobserved modes. 

This failure results from the lack of nonlinear interaction between Fourier modes, which prevents the algorithm from possibly correcting high-frequency components through observed low-frequency data. While the BFN algorithm, \cref{eq:transport:F,eq:transport:B}, recovers the initial condition when $P_M u_0 = u_0$, the BFN algorithm, \cref{eq:transport:F,eq:transport:B}, is redundant in this case as the initial state is already known. 

Notably, \cref{prop:transport} is consistent with the results in \cite{Auroux_Nodet_2012}. While that work demonstrates BFN effectiveness when characteristic curves are fully observed, our results show that the BFN algorithm, \cref{eq:transport:B,eq:transport:F} is unsuitable for recovering initial states from sparse-in-space observations (partial Fourier modes) of the linear transport equation.

\subsection{Viscous Burgers Equation}\label{sect:burgers}
We now consider the viscous Burgers equation, a nonlinear PDE of significant interest in data assimilation. The reference system and the corresponding BFN equations are:
\begin{align}
&\begin{cases}\label{eq:burgers}
u_t = \nu u_{xx} - uu_x,\\
u(0) = u_0,
\end{cases}\\
\text{(F)}\quad &
\begin{cases}
v_t^n = \nu v^n_{xx} - v^nv^n_x  + \mu P_M(u - v^n),\\
v^n(0) = \tilde{v}^{n-1}(0),
\end{cases}\label{eq:burgers:F}\\
\text{(B)}\quad &
\begin{cases}
\tilde{v}_t^n = \nu \tilde{v}^n_{xx} - \tilde{v}^n\tilde{v}^n_x - \mu P_M(u - \tilde{v}^n),\\
\tilde{v}^n(T) = v^n(T),
\end{cases}\label{eq:burgers:B}
\end{align}
for $n = 1,2, \dots$.

It was proven in \cite{Auroux_Nodet_2012} that the BFN, \cref{eq:burgers:B,eq:burgers:F}, algorithm successfully recovers the initial condition for the inviscid Burgers equation under full-state observations ($I_h(u) := u$). However, in practice, complete observations eliminate the need for data assimilation since the initial state is already known. Furthermore, the inclusion of viscosity renders the backward equation \cref{eq:burgers:B} ill-posed. While numerical implementations in \cite{Auroux_Nodet_2012} address this by modifying \cref{eq:burgers:B} through reversing the sign of viscosity in the backward step, such results still rely on the identity operator for observations, i.e., when $I_h(u) = u$. We focus instead on the application of BFN with partial observations restricted to low, or first $M$, Fourier modes.

In the previous sections, we showed that BFN algorithm fails for linear systems if the initial condition is not fully observed. Here, we demonstrate a family of initial conditions for which the BFN algorithm \cref{eq:burgers:F,eq:burgers:B}, fails for the Burgers equation. We utilize the following preliminary results, previously introduced in \cite{Titi_Victor_2025}:

\begin{Prop}\label{prop:1}
    Let $k\in \mathbb{Z}\setminus \{0\}$ and $\phi \in L^2_{\text{per}}$ with zero spatial mean.
    \begin{enumerate}[label=(\roman*)]
        \item $\phi(x + L/k) = \phi(x)$ for a.e. $x\in \mathbb{R}$ if and only if
        $\phi(x) = \sum_{n\in k\mathbb{Z}} \hat{\phi}_n e^{i \frac{2\pi}{L} n x}.$
        \item If $M > 0$ and $|k| \geq M$, then $\phi(x + L/k) = \phi(x)$ for a.e. $x\in \mathbb{R}$ implies $P_M \phi = 0$.
    \end{enumerate}
\end{Prop}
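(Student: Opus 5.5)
The plan is to compute Fourier coefficients of both sides of the translation identity and compare them. Throughout, write $\hat{\phi}_n := \frac{1}{L}\hat{\phi}(n)$, so that $\phi(x) = \sum_{n\in\mathbb{Z}} \hat{\phi}_n e^{i\frac{2\pi}{L}nx}$ in $L^2_{\text{per}}$. The zero-mean assumption says $\hat{\phi}_0 = 0$.

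The first step is a direct computation of the coefficients of the translate. For a.e. defined $\phi\in L^2_{\text{per}}$, the function $\phi(\cdot + L/k)$ is again in $L^2_{\text{per}}$, and its $n$-th coefficient is
\begin{align*}
e^{i\frac{2\pi}{L} n \frac{L}{k}}\hat{\phi}_n = e^{2\pi i n/k}\hat{\phi}_n .
\end{align*}
This follows from the change of variables $y = x + L/k$ in the defining integral, using $L$-periodicity to shift the interval of integration back to $[0,L]$.

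For (i), the key fact is uniqueness of Fourier coefficients in $L^2_{\text{per}}$ (Parseval/Plancherel). The identity $\phi(x+L/k) = \phi(x)$ a.e. holds if and only if $e^{2\pi i n/k}\hat{\phi}_n = \hat{\phi}_n$ for every $n\in\mathbb{Z}$. This in turn holds if and only if $\hat{\phi}_n = 0$ whenever $e^{2\pi i n/k}\neq 1$. Since $e^{2\pi i n/k} = 1$ exactly when $n/k\in\mathbb{Z}$, i.e. $n\in k\mathbb{Z}$, the condition is that $\hat\phi_n=0$ for all $n\notin k\mathbb{Z}$, which is precisely the expansion claimed in (i). The converse direction is the same chain of equivalences read backwards: if only modes in $k\mathbb{Z}$ are present, each is invariant under the shift, and the series converges in $L^2$, so the identity holds a.e.

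For (ii), apply (i): $\phi$ has nonzero coefficients only at $n\in k\mathbb{Z}$. The mode $n=0$ is excluded by the zero-mean assumption, so every nonzero active mode satisfies $|n|\ge |k|\ge M$.

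The only delicate point is the boundary mode $|n| = M$ itself. The projection $P_M$ is indexed by $\chi_M$, the indicator of $|k|\le M$, so if $|k| = M$ the modes $n=\pm k$ would lie in the range of $P_M$. Under that convention the strict inequality $|k|>M$ is what actually makes the argument work. I would therefore state the convention explicitly. If $P_M$ is taken to mean the modes $0\le |n|<M$ (the ``first $M$'' modes), then $|k|\ge M$ suffices as written. Otherwise the hypothesis should be read as $|k|>M$, and then every active mode has $|n|>M$, so $P_M\phi = \sum_{|n|\le M}\hat\phi_n e^{i\frac{2\pi}{L}nx}=0$.

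This index bookkeeping is the main (minor) obstacle. Everything else is routine Fourier analysis.
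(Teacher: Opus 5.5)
Your argument is correct. The paper does not prove this proposition itself (it is imported from \cite{Titi_Victor_2025}). Your route is the standard one: the $n$-th coefficient of $\phi(\cdot+L/k)$ is $e^{2\pi i n/k}\hat{\phi}_n$, so invariance under the shift is equivalent to $\hat{\phi}_n=0$ for $n\notin k\mathbb{Z}$, and the zero-mean hypothesis disposes of $n=0$ in (ii).

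Your caveat about the boundary case is a genuine correction, not just bookkeeping. The paper's convention is that $\chi_M$ is the indicator of $|k|\le M$, as in the proof of Proposition~\ref{prop:heat}. Under it, take $k=M$ and $\phi(x)=\cos(2\pi M x/L)$. This $\phi$ is mean-free and $L/M$-periodic, yet $P_M\phi=\phi\neq 0$, so (ii) needs $|k|>M$.

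Nothing downstream is affected, since Corollary~\ref{cor:periodic_nonuniqueness} and Proposition~\ref{prop:bfn_failure} only use the strict inequality $|k|>M$.
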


\begin{Prop}\label{prop:periodic invariance}
If $u_0 \in H^2_{\text{per}}$ is $L/k$-periodic and mean-free for an integer $k \neq 0$, then the corresponding solutions to \cref{eq:burgers} remain $L/k$-periodic for all $t \in [0,T]$.
\end{Prop}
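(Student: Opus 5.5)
The plan is to use translation invariance of the Burgers equation together with uniqueness of solutions. Let $u$ be the solution of \cref{eq:burgers} with initial data $u_0$. Define the shifted function
$$u^\sharp(x,t) := u(x+L/k,t).$$
Since $F(u)=\nu u_{xx}-uu_x$ has constant coefficients and no explicit $x$-dependence, $u^\sharp$ is again a solution of \cref{eq:burgers}. It lies in the same regularity class as $u$, because spatial translation is an isometry of $H^m_{\text{per}}$. Its initial datum is $u^\sharp(x,0)=u_0(x+L/k)=u_0(x)$, by the assumed $L/k$-periodicity. So $u$ and $u^\sharp$ are two solutions with the same initial data. Uniqueness of solutions of \cref{eq:burgers} on $[0,T]$ then gives $u^\sharp\equiv u$, that is, $u(x+L/k,t)=u(x,t)$ for all $t\in[0,T]$. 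For $\nu>0$ and $u_0\in H^2_{\text{per}}$ this uniqueness is classical; for $\nu=0$ it is the standing shock-free assumption on $[0,T]$.

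Mean-freeness should also be recorded, since \cref{prop:1} requires it later. Integrating \cref{eq:burgers} over a period gives
$$\frac{d}{dt}\int_0^L u\,dx=\int_0^L\Bigl(\nu u_{xx}-\tfrac12(u^2)_x\Bigr)dx=0$$
by periodicity. Hence $u(t)$ remains mean-free. Combined with \cref{prop:1}(i), this shows $u(t)$ has Fourier support in $k\mathbb{Z}$ for all $t$.

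The main obstacle is making the uniqueness step rigorous in the needed class. If a self-contained argument is wanted instead of citing well-posedness, I would set $w=u-u^\sharp$, which satisfies
$$w_t=\nu w_{xx}-uw_x-wu^\sharp_x .$$
Taking the $L^2$ inner product with $w$ and integrating by parts gives
$$\tfrac12\frac{d}{dt}\|w\|^2\le -\nu\|w_x\|^2+\Bigl(\tfrac12\|u_x\|_{L^\infty}+\|u^\sharp_x\|_{L^\infty}\Bigr)\|w\|^2 .$$
Since $H^2_{\text{per}}\hookrightarrow W^{1,\infty}$ in 1D and the solution stays in $H^2$ on $[0,T]$, the $L^\infty$ norms of $u_x$ and $u^\sharp_x$ are integrable in time. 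Gr\"onwall's inequality with $w(0)=0$ then yields $w\equiv0$.

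An alternative route is to work in Fourier space. One can show that the subspace of functions with modes in $k\mathbb{Z}$ is invariant under the nonlinearity, since products of such modes stay in $k\mathbb{Z}$. One would then construct the solution, for example by Galerkin approximation, within that subspace and appeal again to uniqueness. I regard the translation argument as cleaner.
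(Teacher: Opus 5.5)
Your proof is correct. Translation invariance of $\nu u_{xx}-uu_x$ makes $u^\sharp$ a solution with the same initial data, and your energy estimate for $w=u-u^\sharp$ supplies the uniqueness that forces $u^\sharp\equiv u$; that estimate only needs $u_x\in L^1(0,T;L^\infty)$, which holds for $\nu>0$ and, when $\nu=0$, follows from the shock-free assumption. The paper does not prove this proposition itself but imports it from earlier work of Titi and Victor, and translation invariance plus uniqueness is the standard route. Your conservation-of-mean remark is a useful addition: mean-freeness plays no role in the periodicity claim, since $0\in k\mathbb{Z}$, and is needed only to obtain $P_M u(t)=0$ in the subsequent corollary.
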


\begin{Cor}\label{cor:periodic_nonuniqueness}
Two distinct solutions $u_1, u_2$ initialized with $L/k_1$ and $L/k_2$ periodic data, respectively, where $\min(|k_1|, |k_2|) > M$, satisfy $P_M u_1(t) = P_M u_2(t) = 0$ for all $t$, despite the fact that $u_1 \neq u_2$.
\end{Cor}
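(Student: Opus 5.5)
The corollary follows by chaining \cref{prop:periodic invariance} with part (ii) of \cref{prop:1}, applied at each time $t$ and to each solution separately. I take as implicit the hypotheses of \cref{prop:periodic invariance}: the initial data $u_1(0)$ and $u_2(0)$ lie in $H^2_{\text{per}}$, are mean-free, and are respectively $L/k_1$- and $L/k_2$-periodic. I also take "distinct" to mean $u_1(0)\neq u_2(0)$.

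\emph{Step 1 (periodicity persists).} By \cref{prop:periodic invariance}, $u_j(t)$ is $L/k_j$-periodic for every $t\in[0,T]$, $j=1,2$.

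\emph{Step 2 (the mean stays zero).} \cref{prop:1} needs zero spatial mean at each time $t$, so I check that the mean is conserved. Integrating \cref{eq:burgers} over one period gives
\begin{align*}
\frac{d}{dt}\int_0^L u\,dx = \int_0^L \left(\nu u_{xx} - \tfrac12 (u^2)_x\right) dx = 0,
\end{align*}
since both integrands are exact derivatives of $L$-periodic functions. This is justified by the $H^2_{\text{per}}$ regularity of the solution, which the well-posedness assumption provides. Hence $u_j(t)$ is mean-free for all $t$.

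\emph{Step 3 (low modes vanish).} By hypothesis $|k_j|\geq \min(|k_1|,|k_2|)>M$, so in particular $|k_j|\geq M$. Part (ii) of \cref{prop:1} therefore gives $P_M u_j(t)=0$ for all $t\in[0,T]$. Equivalently, by part (i), the only nonzero Fourier coefficients of $u_j(t)$ are at indices in $k_j\mathbb{Z}\setminus\{0\}$, all of which have modulus greater than $M$.

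\emph{Step 4 (the solutions differ).} If $u_1(0)\neq u_2(0)$, then $u_1\neq u_2$ as functions on $[0,T]$; this already holds at $t=0$, and no backward uniqueness is needed. Nonetheless $P_M u_1(t) = P_M u_2(t) = 0$ for every $t$, which is the claim.

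None of these steps is a real obstacle. The only point that needs care is that the mean-free hypothesis of \cref{prop:1} must hold at every time, not just initially, and that is what Step 2 supplies.
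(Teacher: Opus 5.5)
Your proposal is correct and follows the route the paper intends: the corollary is stated without separate proof as the immediate combination of \cref{prop:periodic invariance} (persistence of $L/k_j$-periodicity) with \cref{prop:1}(ii), which is exactly your Steps 1 and 3. Your Step 2 and your reading of the Fourier coefficients through part (i) with the strict inequality $|k_j|>M$ make explicit what the paper leaves implicit. Step 2 conserves the spatial mean, which is needed because $P_M$ contains the zero mode and a nonzero constant is $L/k$-periodic. The part (i) reading also avoids the borderline case $|k|=M$, where part (ii) as literally stated fails if $P_M$ retains all modes $|n|\le M$.
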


This result identifies a fundamental limitation of the BFN algorithm \cref{eq:burgers:F,eq:burgers:B}:

\begin{Prop}\label{prop:bfn_failure}
For any $M \in \mathbb{Z}_{\geq 0}$, there exist solutions to \cref{eq:burgers} that cannot be recovered by \cref{eq:burgers:F,eq:burgers:B} from arbitrary initial conditions.
\end{Prop}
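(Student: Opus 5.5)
The argument rests on \cref{cor:periodic_nonuniqueness}. Fix $M$ and pick two integers $k_1 \neq k_2$ with $\min(|k_1|,|k_2|) > M$, for instance $k_1 = M+1$ and $k_2 = M+2$. Take nonzero mean-free initial data $u_{0,1}, u_{0,2} \in H^2_{\text{per}}$ that are $L/k_1$- and $L/k_2$-periodic, respectively, such as $u_{0,j}(x) = A_j \sin(2\pi k_j x/L)$, or more generally $\lambda u_{0,j}$ for any $\lambda \neq 0$, which already gives infinitely many such data.

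By \cref{prop:periodic invariance}, the corresponding solutions $u_1, u_2$ of \cref{eq:burgers} stay $L/k_j$-periodic on $[0,T]$. By \cref{prop:1}(ii), their observations then vanish: $P_M u_1(t) = P_M u_2(t) = 0$ for all $t\in[0,T]$. Uniqueness of solutions shows $u_1 \neq u_2$, since the initial data differ. In the inviscid case we additionally assume, as in the paper's standing hypothesis, that no shock forms on $[0,T]$, for example by taking the amplitudes small relative to $T$.

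The main step is the observation that in \cref{eq:burgers:F,eq:burgers:B} the reference solution $u$ enters only through $P_M u$. Under the standing assumption that \cref{eq:burgers:F,eq:burgers:B} have unique solutions, fix any initialization $v_0$ and run BFN once with $u = u_1$ and once with $u = u_2$. The forcing terms $\mu P_M u$ are identical (both zero), and the initial and terminal data are propagated identically. Induction on $n$ then shows that the iterates $v^n, \tilde v^n$ coincide for both reference solutions, for every $n$.

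Hence for every $n$ the triangle inequality gives
\begin{align*}
\norm{v^n(0) - u_{0,1}}_{L^2_{\text{per}}} + \norm{v^n(0) - u_{0,2}}_{L^2_{\text{per}}} \geq \norm{u_{0,1} - u_{0,2}}_{L^2_{\text{per}}} > 0,
\end{align*}
so the iterates cannot converge to both initial conditions. Therefore at least one of $u_1, u_2$ is not recovered from the given initialization. Because the iteration does not depend on which reference solution produced the data, the sequence $v^n(0)$ is the same as the one generated from zero observations. Its limit, if it exists, is a single state, so at most one member of the infinite family $\{\lambda u_{0,j}\}$ can be recovered, and this holds for arbitrary $v_0$.

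The only delicate point is the well-posedness of the backward viscous step. The paper assumes it here, so the proof only needs the determinism of the data-to-iterate map, not any estimate.
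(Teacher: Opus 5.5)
Your proof is correct, but it takes a different route from the paper's.

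\textbf{The paper's route.} The paper fixes the reference $u_1$ and uses the specific initialization $v_0=u_2(0)$, where $u_2$ is a second $L/k$-periodic solution with $|k|>M$. Since $P_M(u_1-u_2)\equiv 0$, the function $u_2$ itself solves both the forward and the backward nudging equations. By the assumed uniqueness, every iterate equals $u_2$, so the error is frozen at exactly $\|u_2(0)-u_1(0)\|_{L^2_{\text{per}}}$ for all $n$.

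\textbf{Your route.} You observe that the iteration sees the reference only through $P_M u$. Hence $u_1$ and $u_2$ generate identical iterates from any $v_0$, and the triangle inequality rules out convergence to both.

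\textbf{What each buys.} The paper's fixed-point argument is constructive: it names a failing reference/initialization pair and gives the exact, non-decaying error, but only for a specially chosen start. Your indistinguishability argument works for every initialization and for the whole family at once (at most one member can be recovered). It also applies verbatim to any deterministic scheme fed only with $P_M u$, which is the paper's remark after the proof that no data assimilation method can determine the initial condition from finitely many Fourier modes. What your argument does not tell you is which of the two solutions fails from a given $v_0$. Combining it with the paper's fixed-point observation settles this for $v_0=0$: with zero observations, $0$ is a fixed point of both passes, so neither solution is recovered.

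\textbf{Two small points.}
\begin{enumerate}
\item For $M=0$, your choice $k_1=1$ is not covered by \cref{prop:1}(ii) as stated, since it assumes $M>0$. However, $P_0 u_j=0$ follows directly from conservation of the spatial mean.
\item In the inviscid case, the scaled family $\lambda u_{0,j}$ must be restricted to small $|\lambda|$ to remain shock-free on $[0,T]$. It is still infinite.
\end{enumerate}
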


\begin{proof}
Consider two distinct $L/k$-periodic solutions $u_1$ and $u_2$ of \cref{eq:burgers} with $|k| > M$. If $u_1$ is the reference solution and the algorithm, \cref{eq:burgers:F,eq:burgers:B}, is initialized such that $v^n = \tilde{v}^n = u_2$, the nudging terms $P_M(u_1 - u_2)$ vanish identically by \cref{prop:1}. Thus:
\begin{align}
\limsup_{n\to \infty} \norm{\tilde{v}^n(0) - u_1(0)}_{L^2_{\text{per}}} = \norm{u_2(0) - u_1(0)}_{L^2_{\text{per}}} > 0.
\end{align}
\end{proof}

The BFN algorithm, \cref{eq:burgers:F,eq:burgers:B}, fails to recover the initial condition for any $L/k$-periodic state with $|k| > M$. Such initial conditions are easily constructed using trigonometric polynomials involving only modes $|n| > M$. As demonstrated in \cref{sect:numerical}, this failure extends to more general cases where the initial condition contains unobserved Fourier modes. These results do not contradict \cite{Auroux_Nodet_2012}, as the spectral interpolant used here differs from the interpolant used there. Ultimately, these findings indicate that the initial condition cannot be uniquely determined from observing only a finite number of Fourier modes by any data assimilation method.

\subsection{Numerical Validation Tests - The Burgers equation case}\label{sect:numerical}

In this section, we detail the numerical methods employed to simulate the Burgers and linear transport equations, alongside their back-and-forth nudging (BFN) counterparts. For brevity, results for the heat equation are omitted, as they qualitatively mirror the findings of the linear transport simulations. All simulations were performed in MATLAB (R2024b) using custom-developed code.

The systems are defined on a periodic spatial domain $\Omega = [0, 2]$. We utilize pseudo-spectral methods, specifically an integrating factor technique, to compute the linear viscous and transport terms exactly. Time-stepping is handled via an explicit fourth-order Runge-Kutta (RK4) scheme, while the nonlinear and nudging terms are treated explicitly. For a comprehensive review of integrating factor schemes, we refer the reader to \cite{Kassam_Trefethen_2005, Trefethen_2000_spML}. The specific algorithmic implementation is summarized in \cref{alg:IFRK4} below.

A primary challenge in this regime is the ill-posedness of the backward-in-time equations for the Burgers and transport models in the presence of non-zero viscosity. While this does not affect the reference solution---which is recorded during a well-posed forward simulation---it necessitates regularization for the backward BFN pass. To mitigate the exponential growth of high wavenumbers induced by the anti-diffusive term, we regularize the equation by truncating the diffusion operator, retaining dissipative effects only on the lowest 50 Fourier modes.

To accurately resolve the nonlinear terms and prevent aliasing, we apply the $2/3$ dealiasing rule, truncating the upper third of the Fourier spectrum. In the spectral plots provided, a red vertical line indicates this dealiasing cutoff, while a blue vertical line denotes the observational window cutoff, representing the Fourier modes effectively observed and utilized in the nudging term.

\begin{algorithm}
    \caption{Fourth-Order Runge-Kutta with Integrating Factor for Nudging Systems}
    \label{alg:IFRK4}
\begin{algorithmic}[1] 
\Require
Observational operator $H := P_M$ (projection onto the lowest $M$ Fourier modes); \\
Nudging parameter $\mu$;\\
Nonlinear function $\mathcal{N}(\hat{u},t) := \frac{ik}{2} \mathcal{F}\left(\left(\mathcal{F}^{-1}(P_{\frac{2N}{3}}\hat{u})\right)^2\right)$, where $P_{\frac{2N}{3}}$ is the dealiasing projection;\\
Linear operator $L := -\nu k^2 + aik$ for the integrating factor ($\nu \geq 0, a \geq 0$);\\
$\hat{u}_n$ and $\hat{v}_n$, the Fourier-transformed solutions at the current time step.
\bigskip

\State \textbf{Reference state evolution:} $\hat{u}_{n+1} := M(\hat{u}_n)$
\begin{align*}
k_1 &= \mathcal{N}(\hat{u}_n,t_n)\\
k_2 &= \mathcal{N}(e^{-L\Delta t}\left(\hat{u}_n + \frac{\Delta t}{2}k_1\right),t_n + \frac{\Delta t}{2})\\
k_3 &= \mathcal{N}(e^{-L\Delta t}\left(\hat{u}_n + \frac{\Delta t}{2}k_2\right),t_n + \frac{\Delta t}{2})\\
k_4 &= \mathcal{N}(e^{-L\Delta t} \hat{u}_n + e^{-L\frac{\Delta t}{2}}\Delta t k_3, t_n + \Delta t)\\
\hat{u}_{n+1} &= e^{-L\Delta t}\hat{u}_{n} + \frac{\Delta t}{6}\left( e^{-L\Delta t}k_1 + 2e^{-L\frac{\Delta t}{2}}\left(k_2 + k_3\right) + k_4\right)
\end{align*}

\State \textbf{Nudged state evolution (Forward):}
\begin{align*}\hat{v}_{n+1} = M(\hat{v}_n) + e^{-L\Delta t}\mu \Delta t P_M\left( \hat{u}_n - \hat{v}_n\right)\end{align*}
\end{algorithmic}
\end{algorithm}

The nudging term is implemented numerically in \cref{alg:IFRK4} as an explicit term. This formulation of the feedback-control term induces a CFL-like stability constraint, $\mu \lesssim \frac{2}{\Delta t}$, which is necessary for the numerical scheme to remain stable. While this constraint could be circumvented by utilizing an implicit formulation of the nudging term, such an approach is not considered in the present work.

Notably, the nudging term cannot be directly implemented using multi-stage methods, such as RK4, because doing so would necessitate temporal interpolation of the observational data. For the simulations featured in this study, a nudging parameter of $\mu = 100$ was used, as it was found to be sufficient for our analysis. For an in-depth examination of how the nudging parameter influences the recovery of the reference solution, we refer the reader to \cite{Carlson_Farhat_Martinez_Victor_2026_ISYNC}.

\subsection{Computational Results}
In this section, we present numerical results for the viscous linear transport ($a=1$) and Burgers equations, alongside their inviscid counterparts. For these simulations, observational data consist of the first 16 Fourier modes ($M=16$). We report results for the first five iterations of the BFN algorithm, noting that extended simulations yielded qualitatively identical behavior. Error is plotted against ``iteration time'', illustrating the evolution across successive forward and backward passes. Specifically, the intervals $[n, n+0.5)$ and $[n+0.5, n+1)$ correspond to the $n$-th forward and backward BFN passes, respectively.

\subsubsection{BFN with Zero Observations} 
We first evaluate the algorithm's performance when observations are identically zero. We initialize \cref{eq:generic system} with 
\begin{align}
    u_0(x) = 0.05\cos(30\pi x), \quad \text{and} \quad \tilde{v}_0(x) \equiv 0.
\end{align}
These profiles are selected to numerically demonstrate the recovery failure predicted by \cref{prop:bfn_failure}. Since the observed modes of $u$ remain zero for all $t$, no information is available to reconstruct the initial condition. Consequently, the nudged solution $v$ remains identically zero throughout the simulation.

The results for this trial are shown in \cref{fig:error zero ob_LT,fig:spec zero ob_LT,fig:soln zero ob_LT,fig:error zero ob_B,fig:spec zero ob_B,fig:soln zero ob_B}. In the viscous case (\cref{fig:error zero ob:VLT}), the $L^2_{\text{per}}$ error appears to converge toward machine precision; however, this is strictly a consequence of physical dissipation. The failure to recover the initial state is evidenced by the fact that the error returns to its initial magnitude at the beginning of each backward BFN iteration.

\subsubsection{BFN with Full-State Observations}
We verify the efficacy of the BFN algorithm in the regime where the initial profile is fully resolved within the observational window. We set 
\begin{align}
    u_0(x) = \cos(\pi x), \quad \text{and} \quad \tilde{v}_0(x) \equiv 0.
\end{align}
The simulation time ($T = 0.0001$) is sufficiently short to ensure that the dynamics of the Burgers equation \cref{eq:burgers} do not excite wavenumbers beyond $M = 16$. Thus, the reference solution $u(x,t)$ is fully observed at every time step.

The results are presented in \cref{fig:error full ob_LT,fig:spec full ob_LT,fig:soln full ob_LT,fig:error full ob_B,fig:spec full ob_B,fig:soln full ob_B}. As expected, the BFN algorithm successfully recovers the initial condition, consistent with the analytical proof in previous literature. We observe an increase in error during the backward iterations of the Burgers equation (\cref{fig:error full ob:IB,fig:error full ob:VB}) due to the inherent numerical instability of reversing nonlinear flows. We consider the solutions converged once single precision ($10^{-8}$) is achieved. While precision could be further improved by refining the time step or employing higher-order discretizations, single precision is sufficient for the scope of this study.

\subsubsection{BFN with Low-Mode Observations}
Finally, we demonstrate that the BFN algorithm, \cref{eq:burgers:F,eq:burgers:B}, fails to recover high-frequency components for more general initial data. We set 
\begin{align}
    u_0(x) = \cos(\pi x) + 0.05\cos(30\pi x), \quad \text{and} \quad \tilde{v}_0(x) \equiv 0.
\end{align}
As shown in \cref{fig:spec partial ob_LT,fig:spec partial ob_B}, the reference solution exhibits significant activity in both the observed low modes and unobserved high modes.

The results in \cref{fig:error partial ob_LT,fig:spec partial ob_LT,fig:soln partial ob_LT,fig:error partial ob_B,fig:spec partial ob_B,fig:soln partial ob_B} reveal that the error converges to a non-zero floor. In the inviscid cases (\cref{fig:error partial ob:IB,fig:error partial ob:ILT}), this error remains constant, while in the viscous cases (\cref{fig:error partial ob:VB,fig:error partial ob:VLT}), it oscillates in alignment with the BFN iterations. During the forward pass, the error decays exponentially, which is consistent with established forward-nudging results. However, the spectral plots (\cref{fig:spec partial ob_LT,fig:spec partial ob_B}) explicitly show that while low modes are recovered, no frequencies beyond $M=16$ develop in the nudged solution. This confirms that the algorithm's failure is an intrinsic property under low-mode observations rather than a result of the specific cases used in \cref{prop:bfn_failure}.

\begin{figure}
    \centering
    \begin{subfigure}[b]{.49\textwidth}
        \centering
        \includegraphics[width=\textwidth]{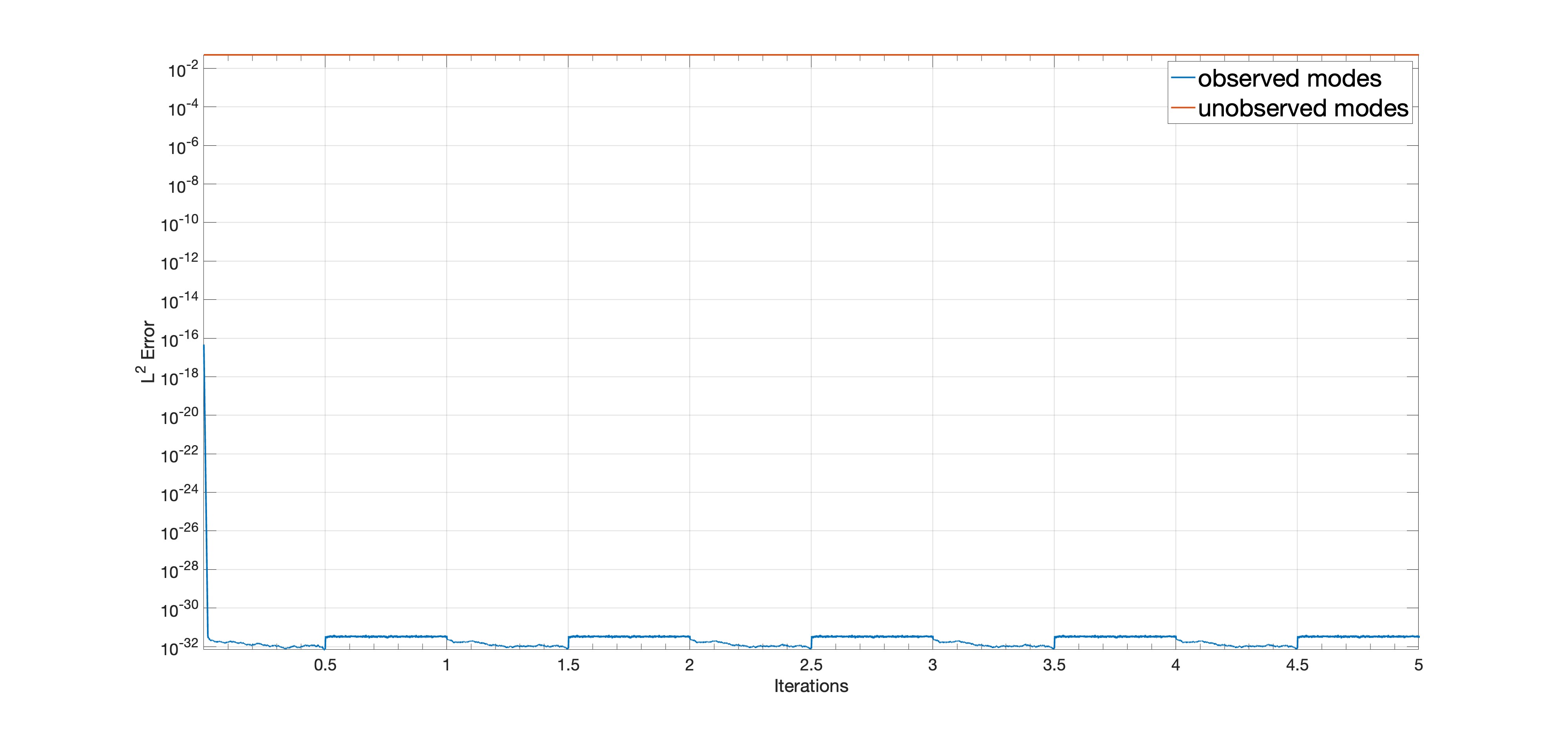}
        \caption{\small Inviscid Linear Transport equation}
        \label{fig:error zero ob:ILT}
    \end{subfigure}
    \hfill
    \begin{subfigure}[b]{.49\textwidth}
        \centering
        \includegraphics[width=\textwidth]{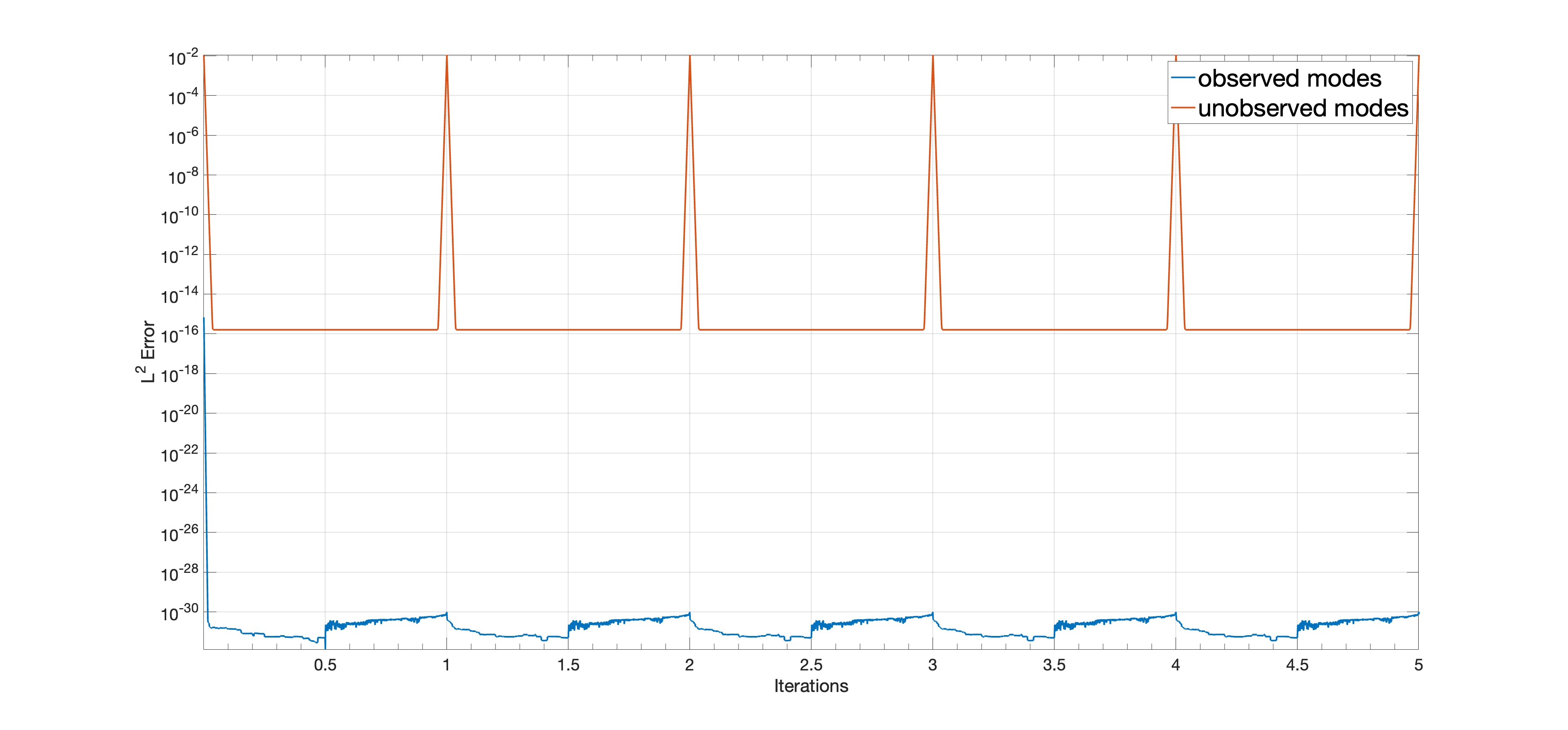}
        \caption{\small Viscous Linear Transport equation}
        \label{fig:error zero ob:VLT}
    \end{subfigure}
    \caption{\small $L^2_{\text{per}}$ error with zero observations.}
    \label{fig:error zero ob_LT}
\end{figure}

\begin{figure}
    \centering
    \begin{subfigure}[b]{.49\textwidth}
        \centering
        \includegraphics[width=\textwidth]{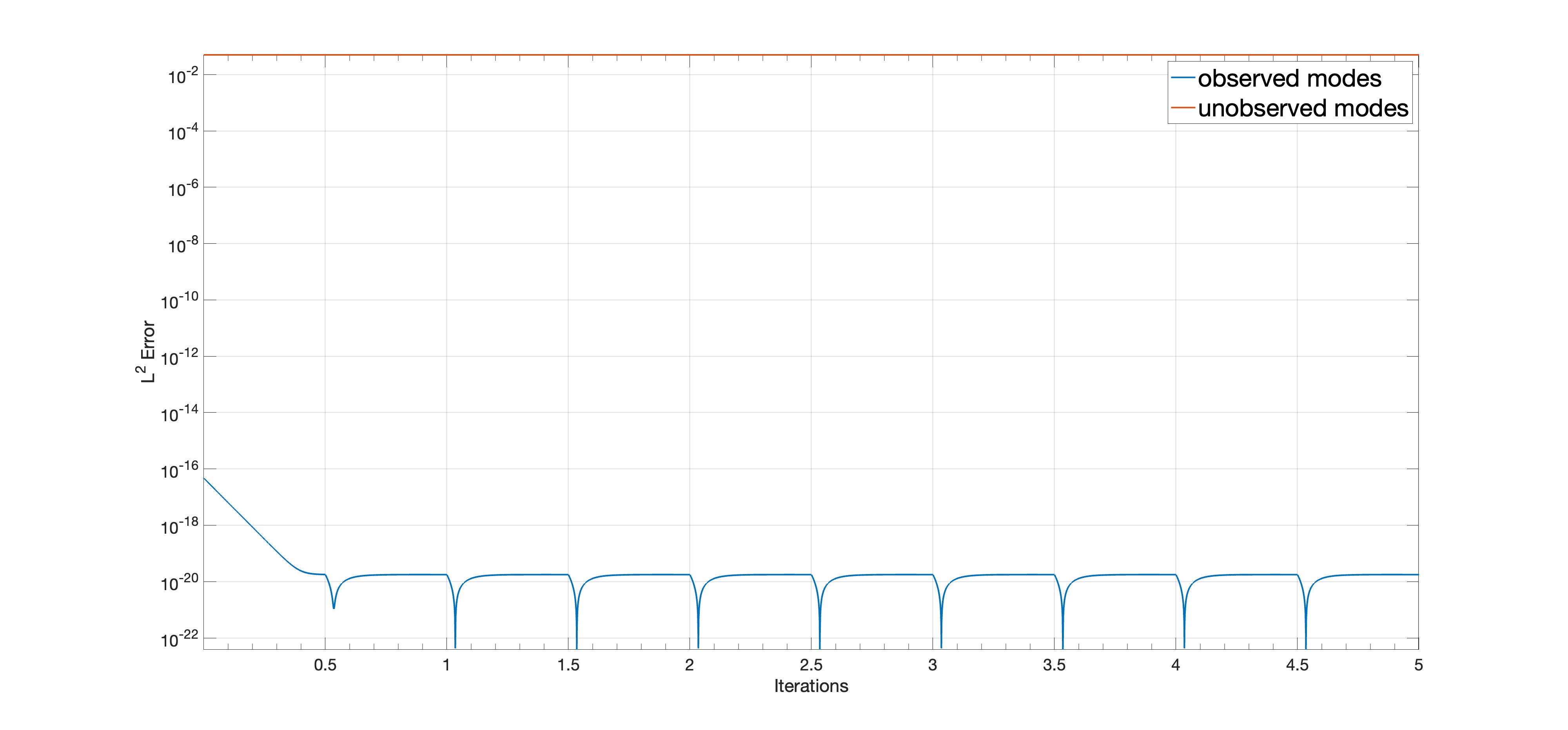}
        \caption{\small Inviscid Burgers equation}
        \label{fig:error zero ob:IB}
    \end{subfigure}
    \hfill
    \begin{subfigure}[b]{.49\textwidth}
        \centering
        \includegraphics[width=\textwidth]{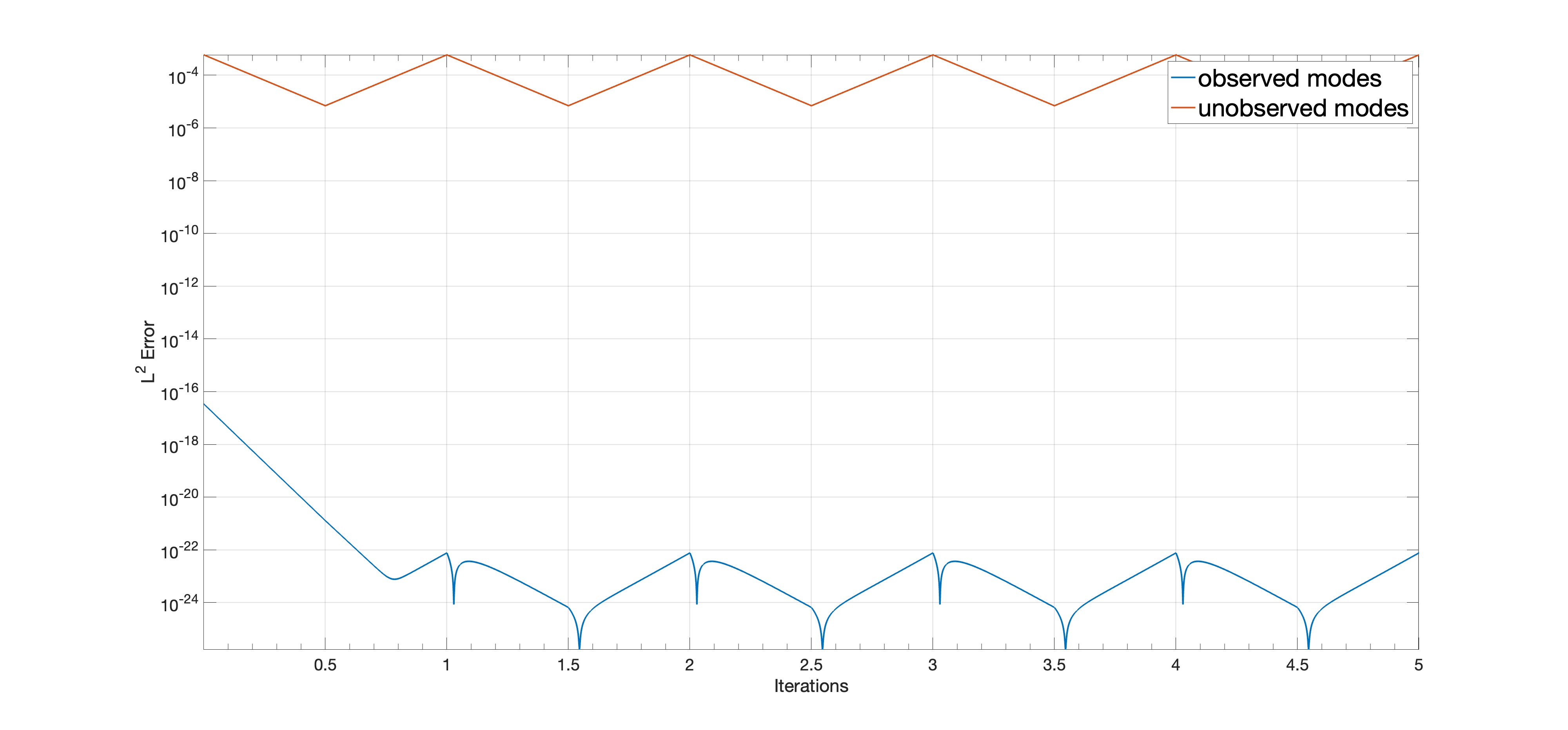}
        \caption{\small Viscous Burgers equation}
        \label{fig:error zero ob:VB}
    \end{subfigure}
    \caption{\small $L^2_{\text{per}}$ error with zero observations.}
    \label{fig:error zero ob_B}
\end{figure}

\begin{figure}
    \centering
    \begin{subfigure}[b]{.49\textwidth}
        \centering
        \includegraphics[width=\textwidth]{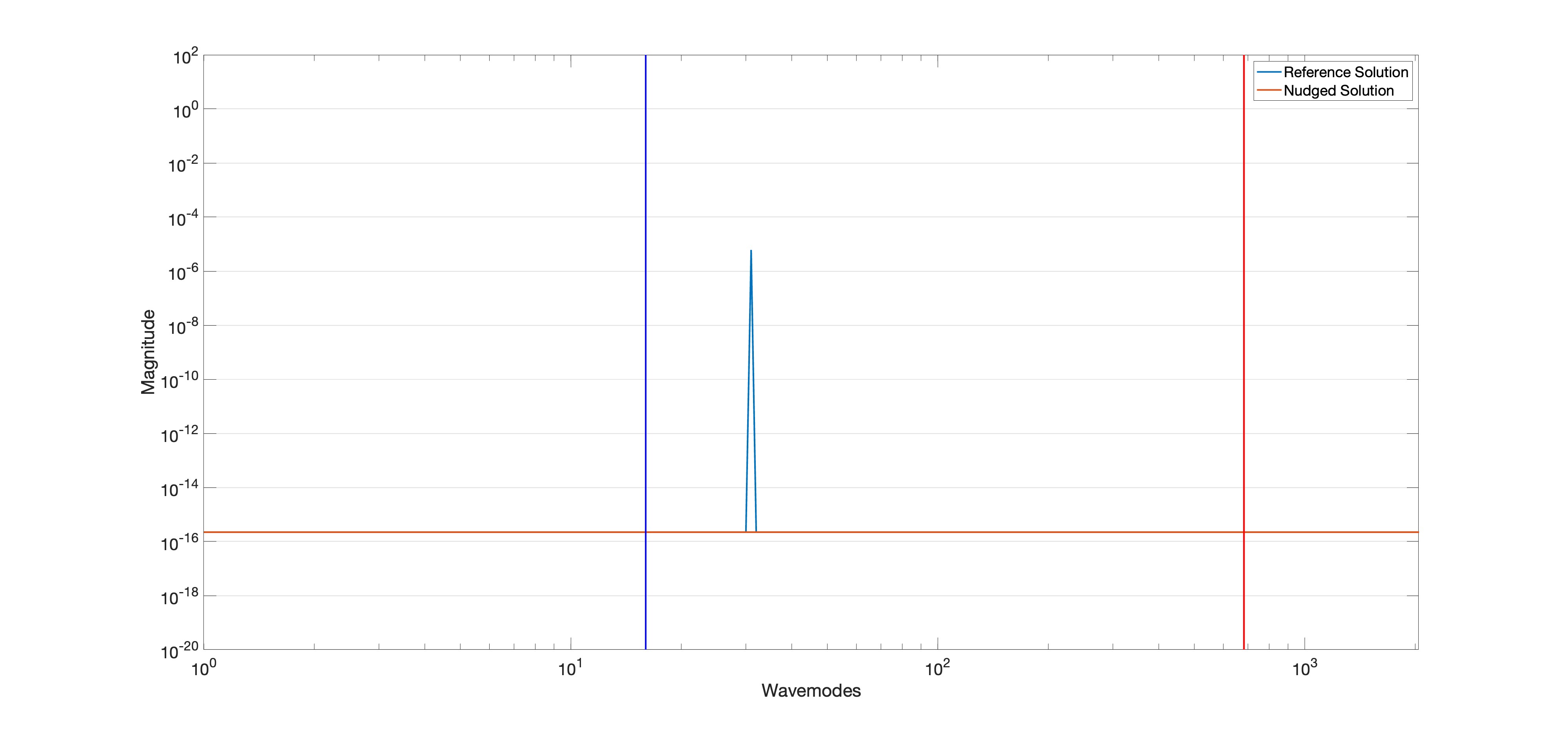}
        \caption{\small Inviscid Linear Transport equation}
        \label{fig:spec zero ob:ILT}
    \end{subfigure}
    \hfill
    \begin{subfigure}[b]{.49\textwidth}
        \centering
        \includegraphics[width=\textwidth]{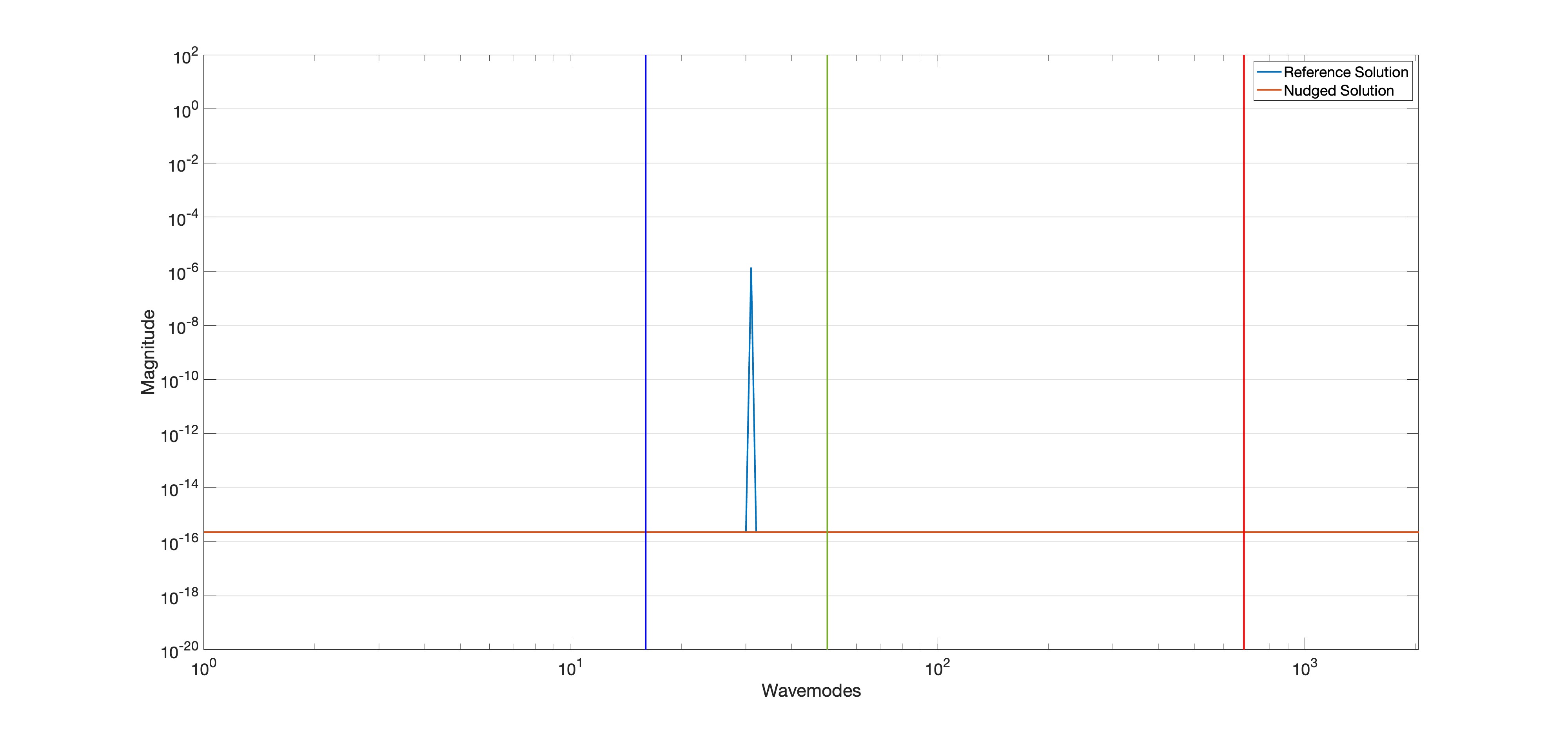}
        \caption{\small Viscous Linear Transport equation}
        \label{fig:spec zero ob:VLT}
    \end{subfigure}
    \caption{\small $L^2_{\text{per}}$ energy spectrum of the recovered initial condition using zero observations.}
    \label{fig:spec zero ob_LT}
\end{figure}

\begin{figure}
    \centering
    \begin{subfigure}[b]{.49\textwidth}
        \centering
        \includegraphics[width=\textwidth]{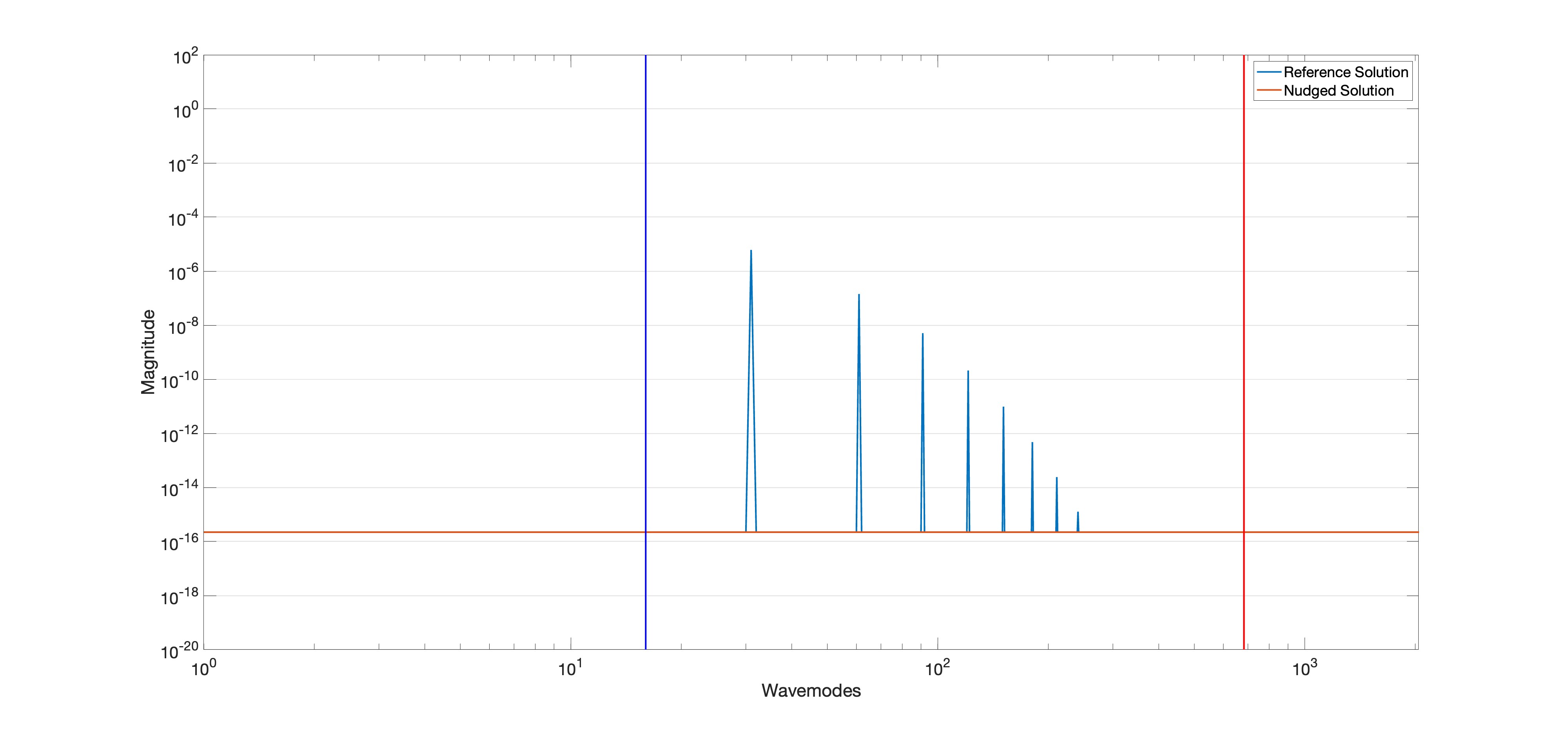}
        \caption{\small Inviscid Burgers equation}
        \label{fig:spec zero ob:IB}
    \end{subfigure}
    \hfill
    \begin{subfigure}[b]{.49\textwidth}
        \centering
        \includegraphics[width=\textwidth]{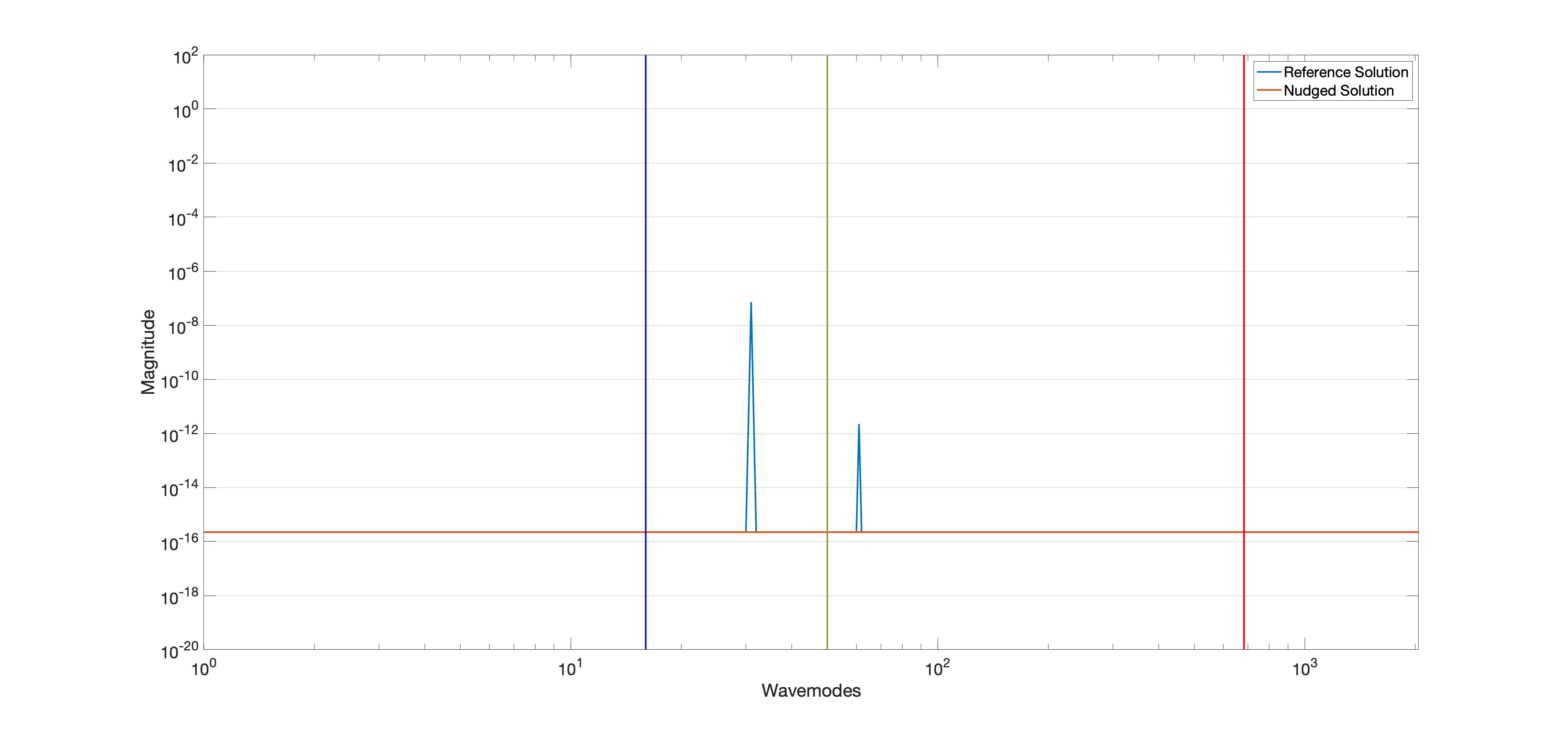}
        \caption{\small Viscous Burgers equation}
        \label{fig:spec zero ob:VB}
    \end{subfigure}
    \caption{\small $L^2_{\text{per}}$ energy spectrum of the recovered initial condition using zero observations.}
    \label{fig:spec zero ob_B}
\end{figure}

\begin{figure}
    \centering
    \begin{subfigure}[b]{.49\textwidth}
        \centering
        \includegraphics[width=\textwidth]{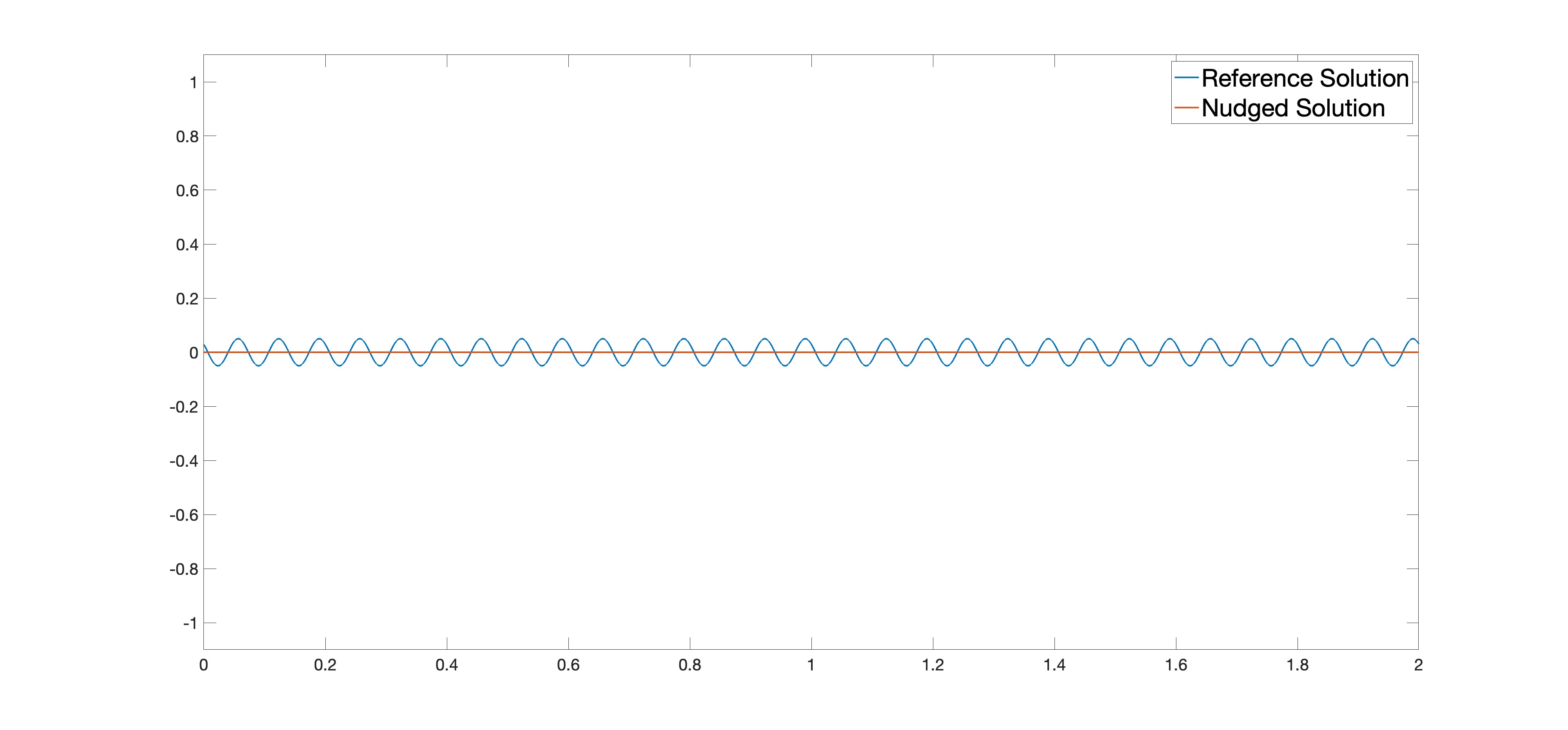}
        \caption{\small Inviscid Linear Transport equation}
        \label{fig:soln zero ob:ILT}
    \end{subfigure}
    \hfill
    \begin{subfigure}[b]{.49\textwidth}
        \centering
        \includegraphics[width=\textwidth]{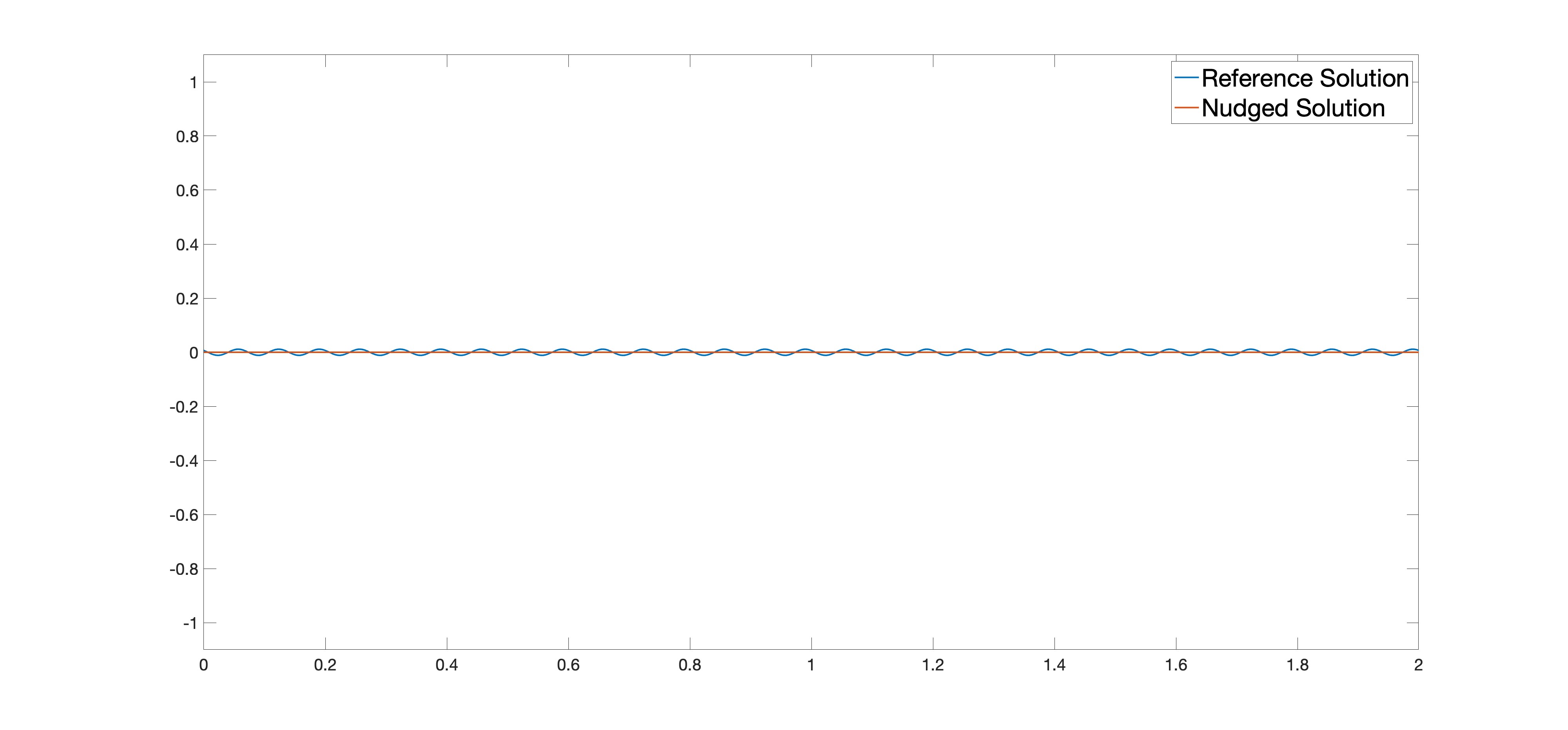}
        \caption{\small Viscous Linear Transport equation}
        \label{fig:soln zero ob:VLT}
    \end{subfigure}
    \caption{\small Recovered initial condition with zero observations.}
    \label{fig:soln zero ob_LT}
\end{figure}

\begin{figure}
    \centering
    \begin{subfigure}[b]{.49\textwidth}
        \centering
        \includegraphics[width=\textwidth]{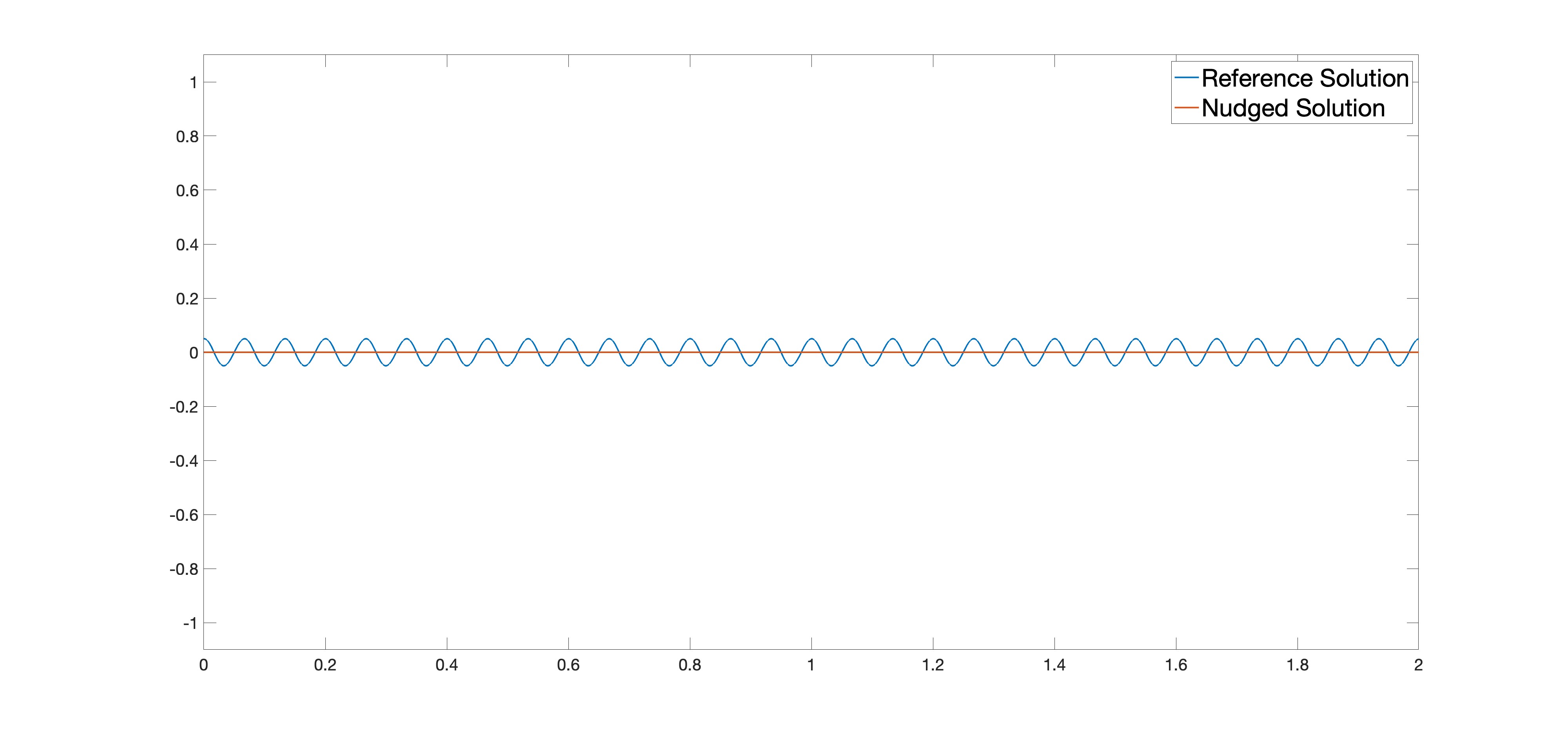}
        \caption{\small Inviscid Burgers equation}
        \label{fig:soln zero ob:IB}
    \end{subfigure}
    \hfill
    \begin{subfigure}[b]{.49\textwidth}
        \centering
        \includegraphics[width=\textwidth]{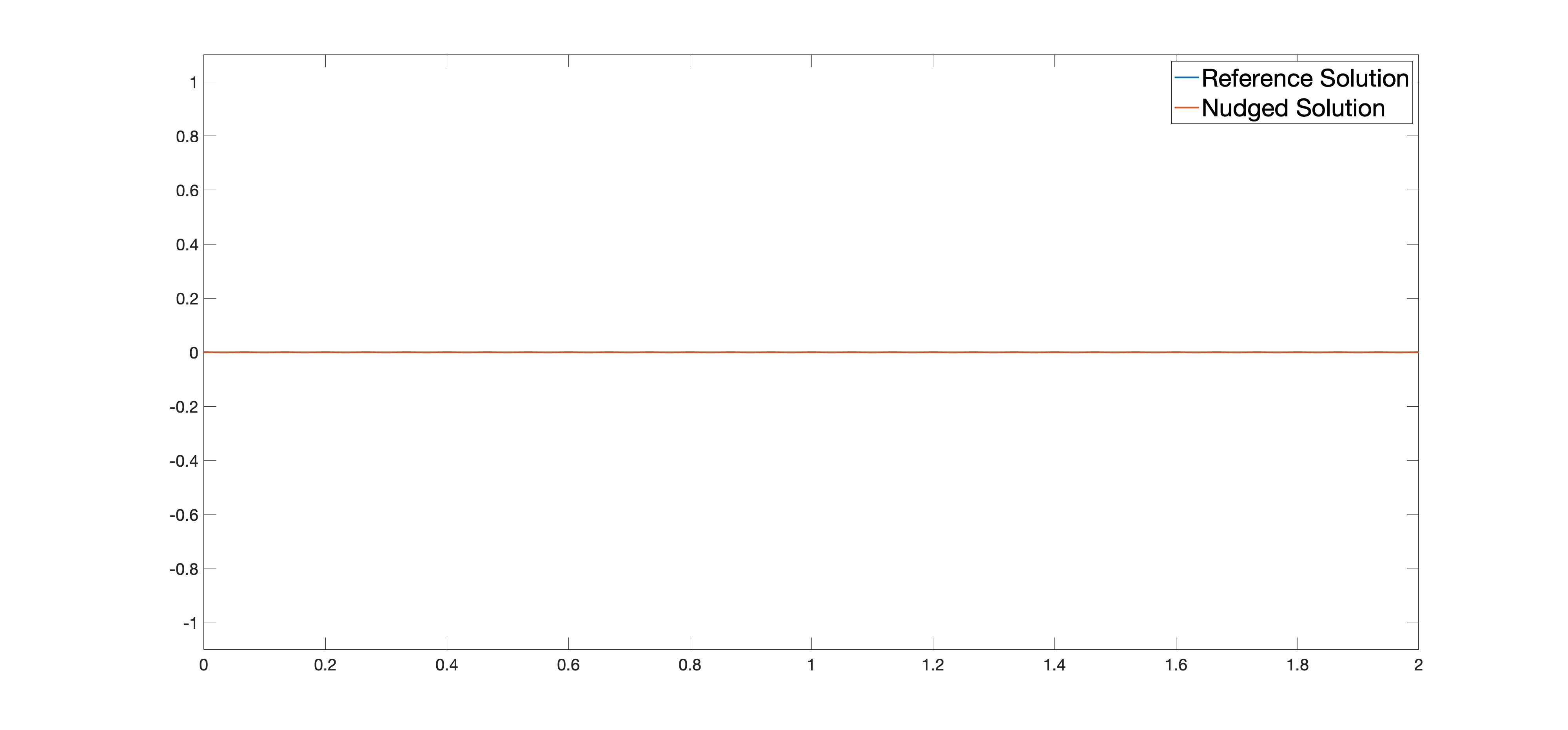}
        \caption{\small Viscous Burgers equation}
        \label{fig:soln zero ob:VB}
    \end{subfigure}
    \caption{\small Recovered initial condition with zero observations.}
    \label{fig:soln zero ob_B}
\end{figure}

\begin{figure}
    \centering
    \begin{subfigure}[b]{.49\textwidth}
        \centering
        \includegraphics[width=\textwidth]{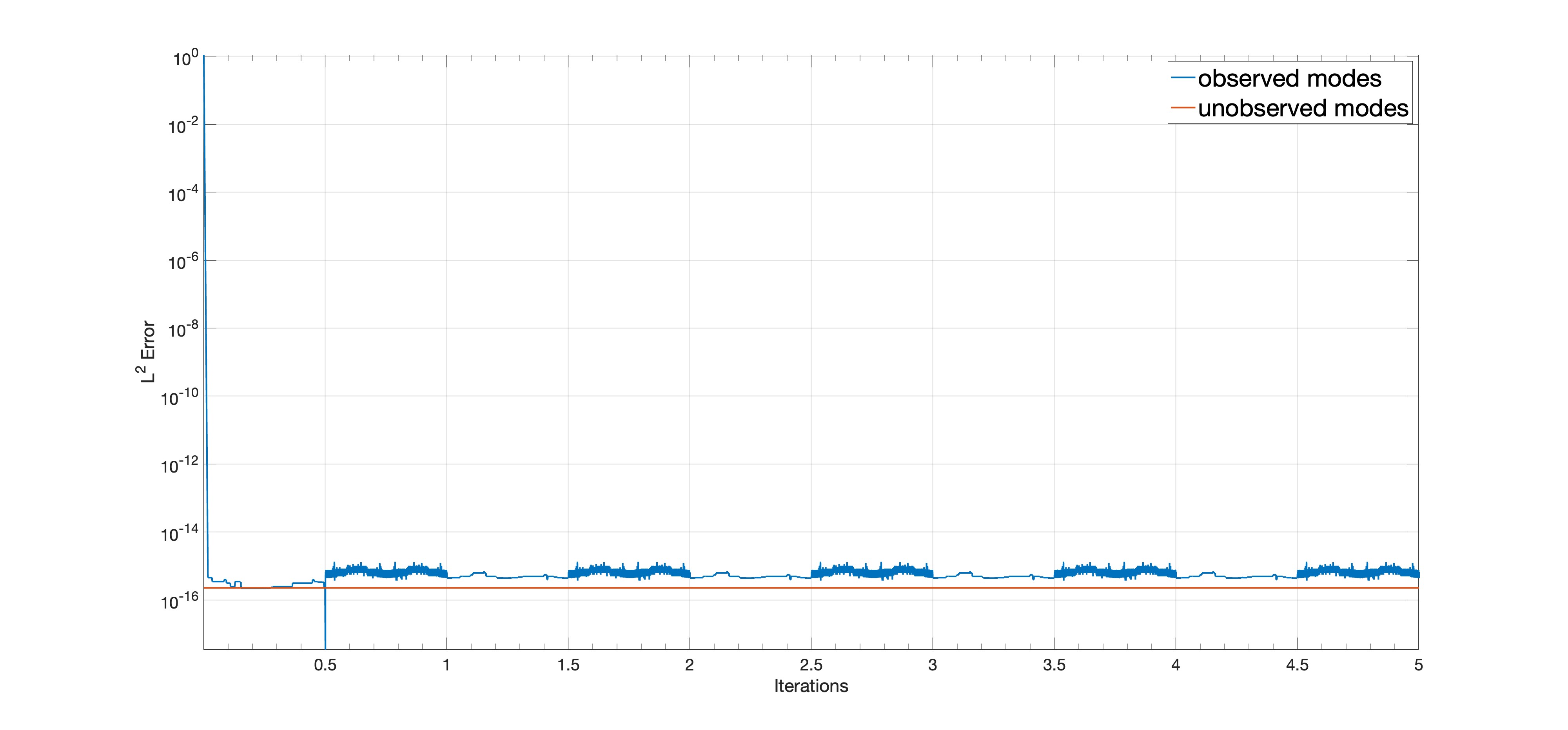}
        \caption{\small Inviscid Linear Transport equation}
        \label{fig:error full ob:ILT}
    \end{subfigure}
    \hfill
    \begin{subfigure}[b]{.49\textwidth}
        \centering
        \includegraphics[width=\textwidth]{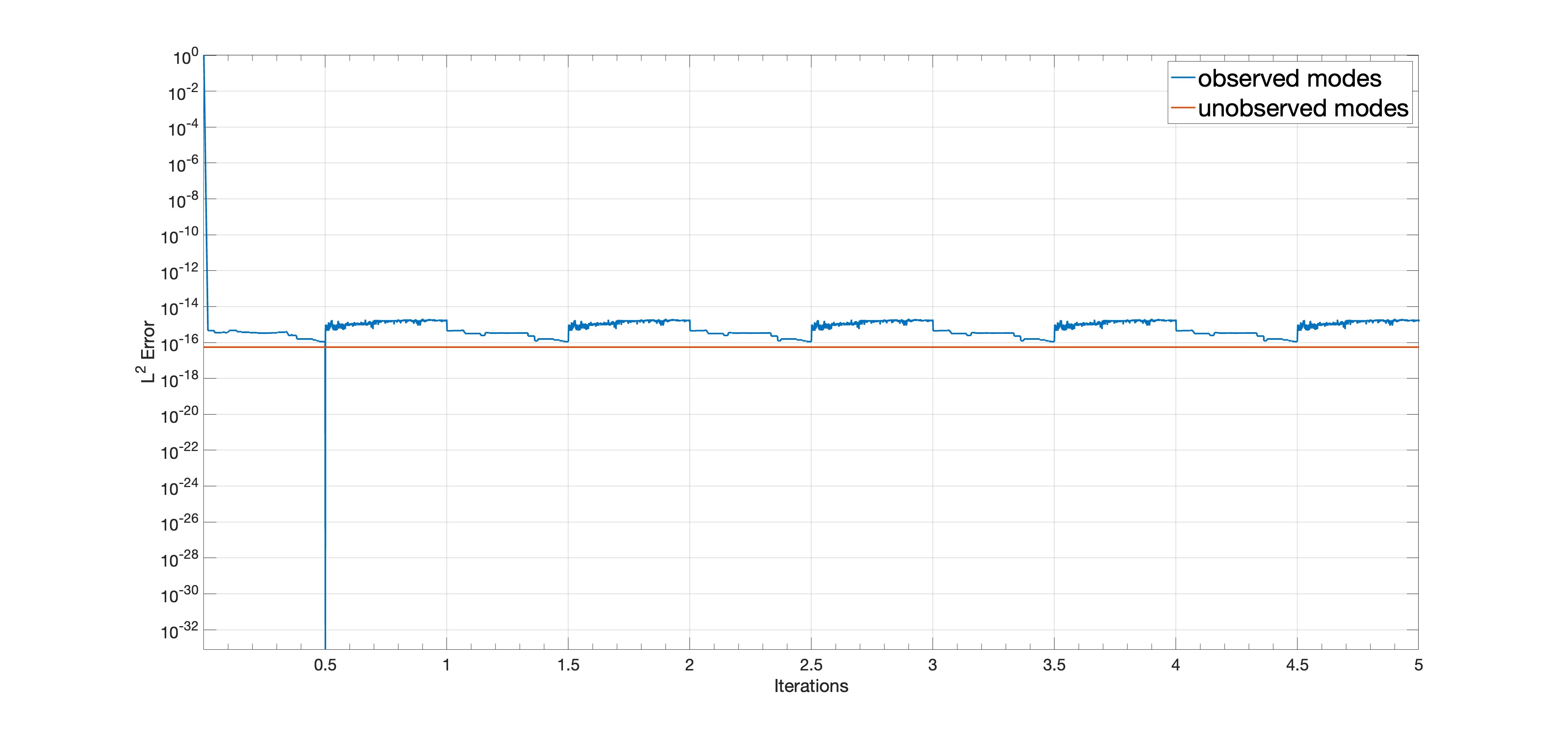}
        \caption{\small Viscous Linear Transport equation}
        \label{fig:error full ob:VLT}
    \end{subfigure}
    \caption{\small $L^2_{\text{per}}$ error using full observations.}
    \label{fig:error full ob_LT}
\end{figure}

\begin{figure}
    \centering
    \begin{subfigure}[b]{.49\textwidth}
        \centering
        \includegraphics[width=\textwidth]{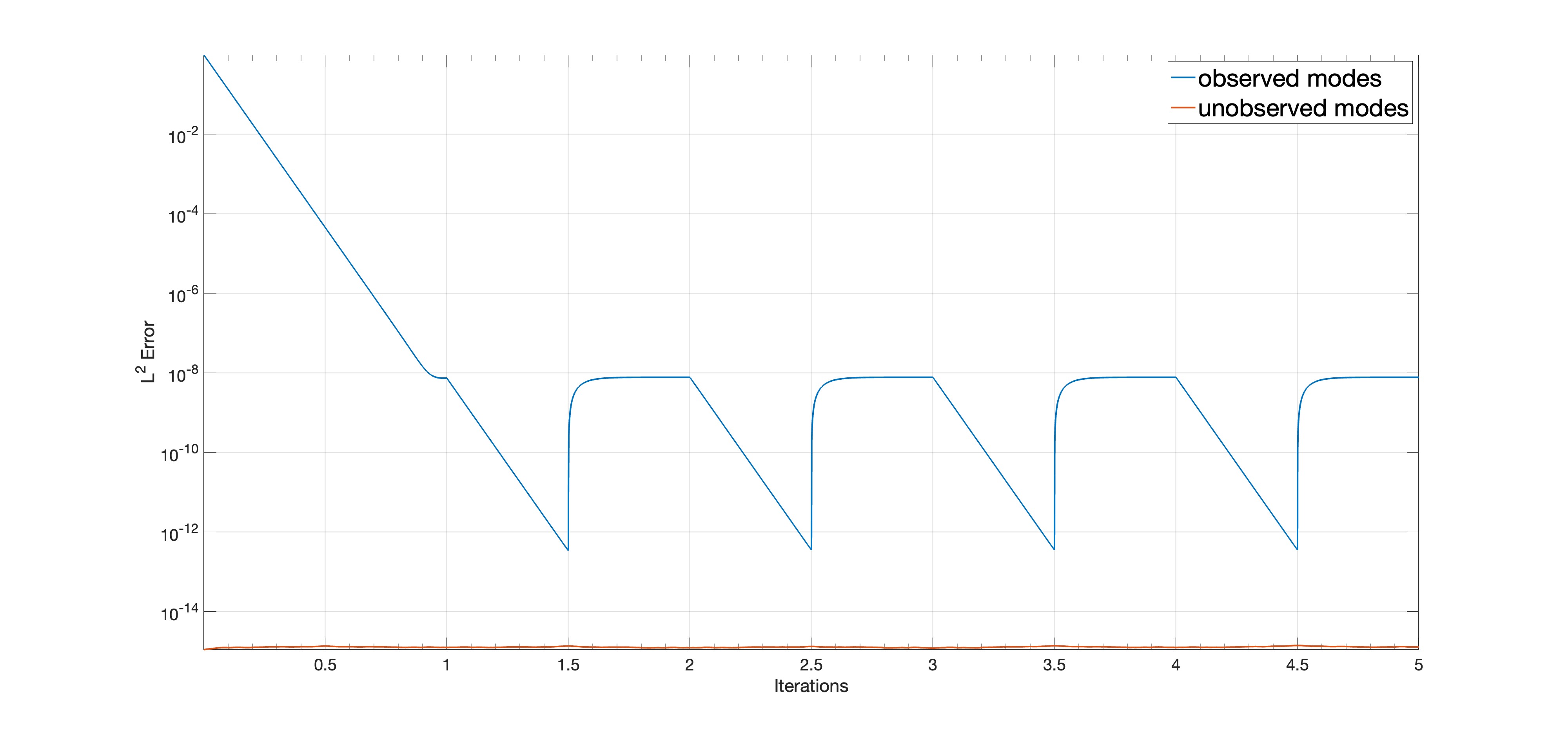}
        \caption{\small Inviscid Burgers equation}
        \label{fig:error full ob:IB}
    \end{subfigure}
    \hfill
    \begin{subfigure}[b]{.49\textwidth}
        \centering
        \includegraphics[width=\textwidth]{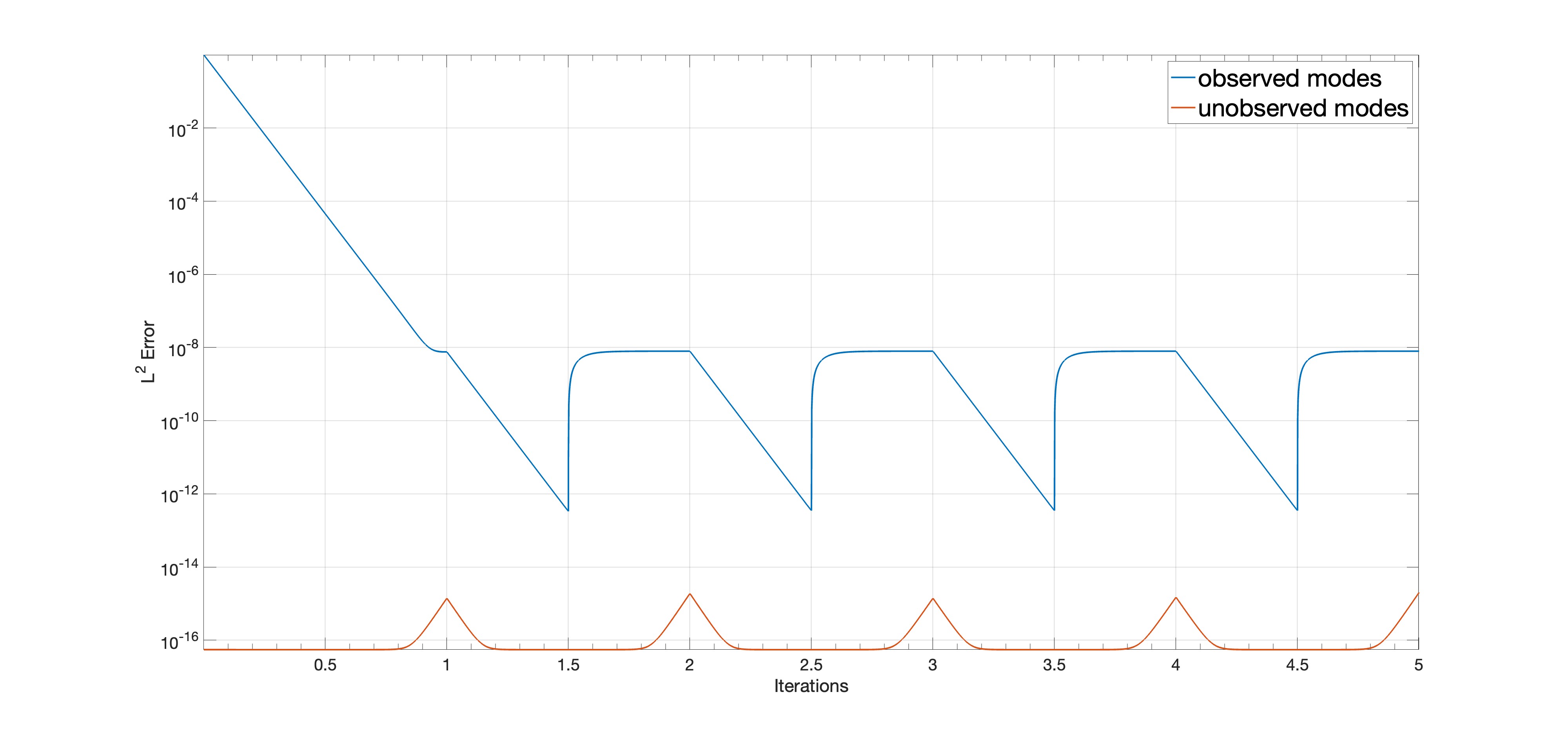}
        \caption{\small Viscous Burgers equation}
        \label{fig:error full ob:VB}
    \end{subfigure}
    \caption{\small $L^2_{\text{per}}$ error using full observations.}
    \label{fig:error full ob_B}
\end{figure}

\begin{figure}
    \centering
    \begin{subfigure}[b]{.49\textwidth}
        \centering
        \includegraphics[width=\textwidth]{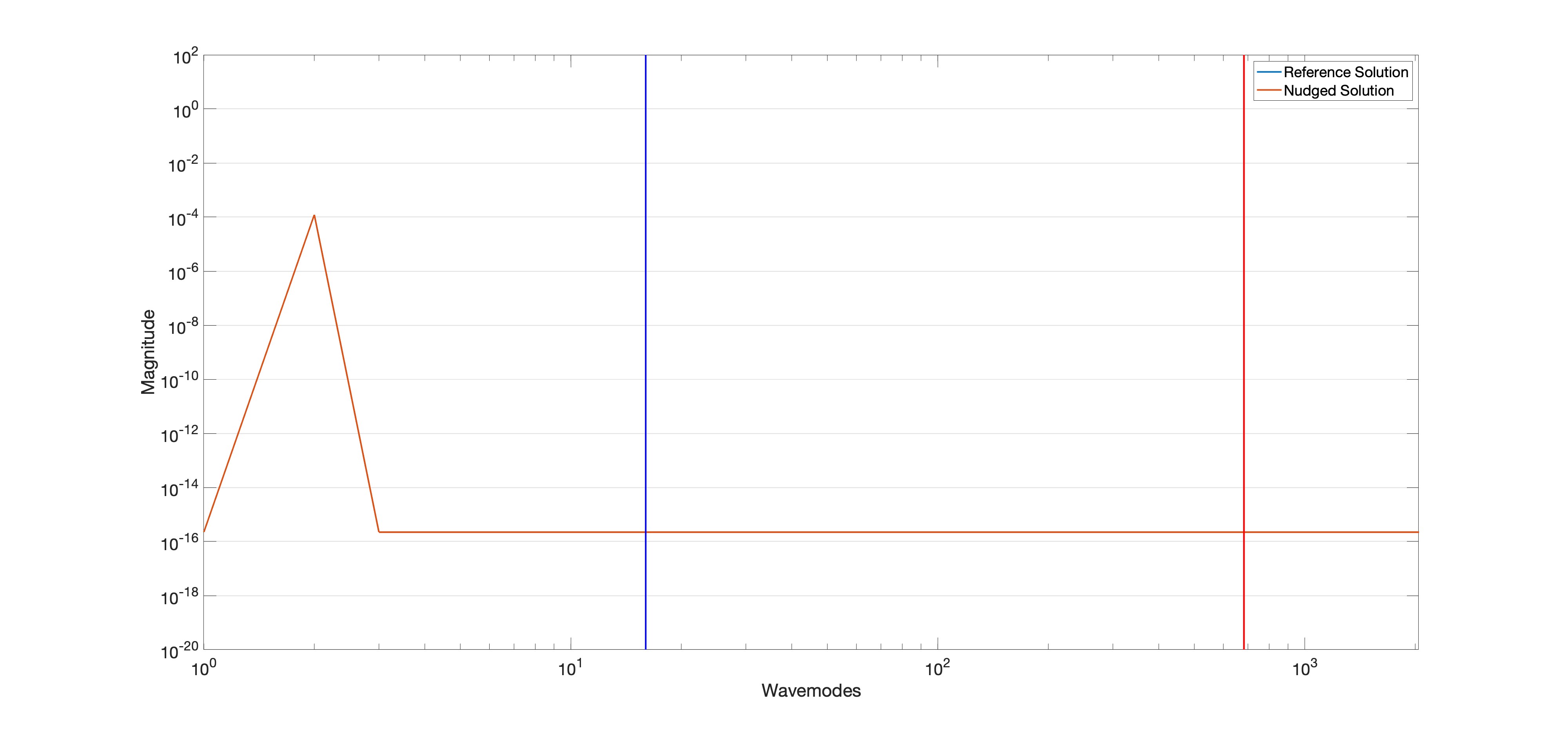}
        \caption{\small Inviscid Linear Transport equation}
        \label{fig:spec full ob:ILT}
    \end{subfigure}
    \hfill
    \begin{subfigure}[b]{.49\textwidth}
        \centering
        \includegraphics[width=\textwidth]{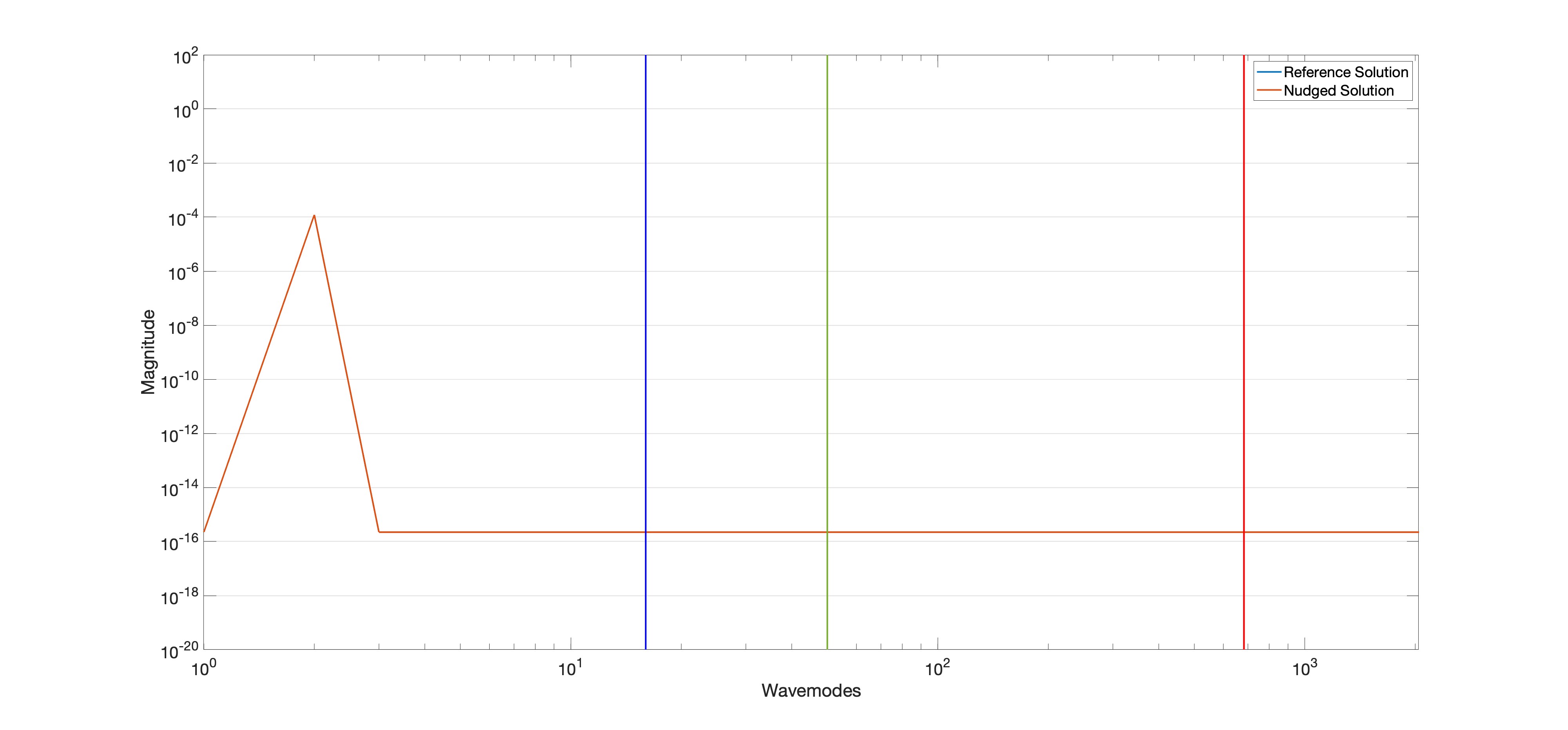}
        \caption{\small Viscous Linear Transport equation}
        \label{fig:spec full ob:VLT}
    \end{subfigure}
    \caption{\small $L^2_{\text{per}}$ energy spectrum of the recovered initial condition with full observations.}
    \label{fig:spec full ob_LT}
\end{figure}

\begin{figure}
    \centering
    \begin{subfigure}[b]{.49\textwidth}
        \centering
        \includegraphics[width=\textwidth]{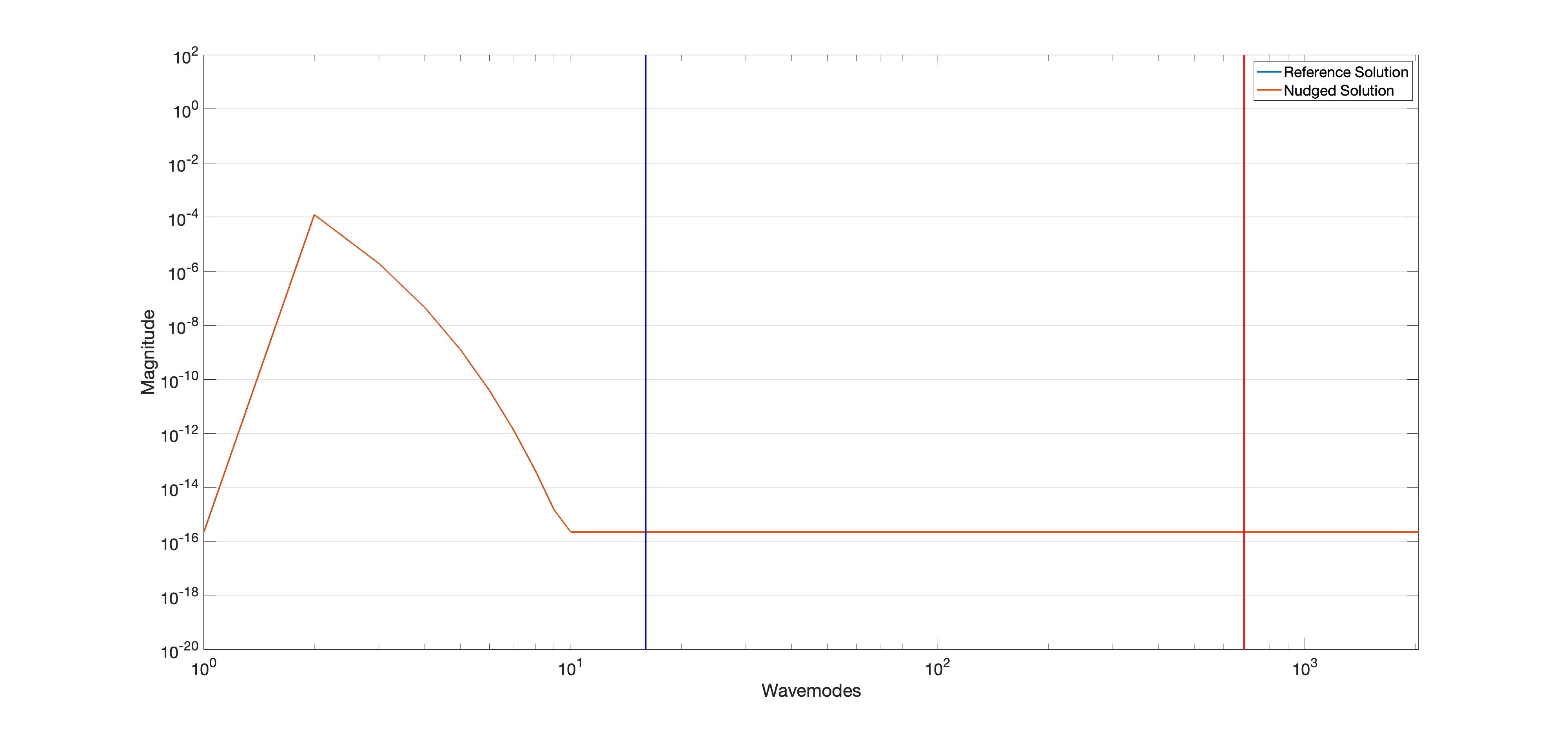}
        \caption{\small Inviscid Burgers equation}
        \label{fig:spec full ob:IB}
    \end{subfigure}
    \hfill
    \begin{subfigure}[b]{.49\textwidth}
        \centering
        \includegraphics[width=\textwidth]{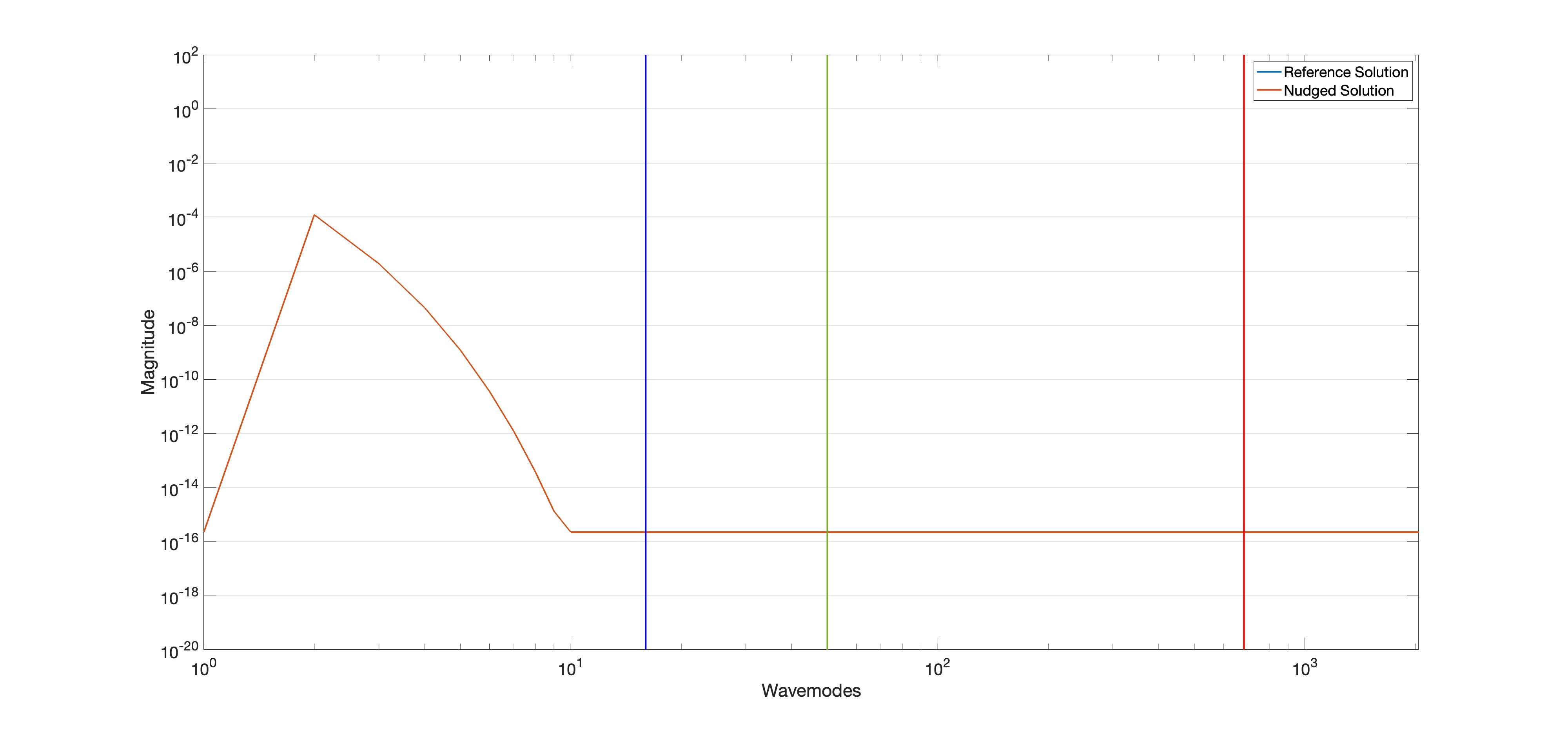}
        \caption{\small Viscous Burgers equation}
        \label{fig:spec full ob:VB}
    \end{subfigure}
    \caption{\small $L^2_{\text{per}}$ energy spectrum of the recovered initial condition using full observations.}
    \label{fig:spec full ob_B}
\end{figure}

\begin{figure}
    \centering
    \begin{subfigure}[b]{.49\textwidth}
        \centering
        \includegraphics[width=\textwidth]{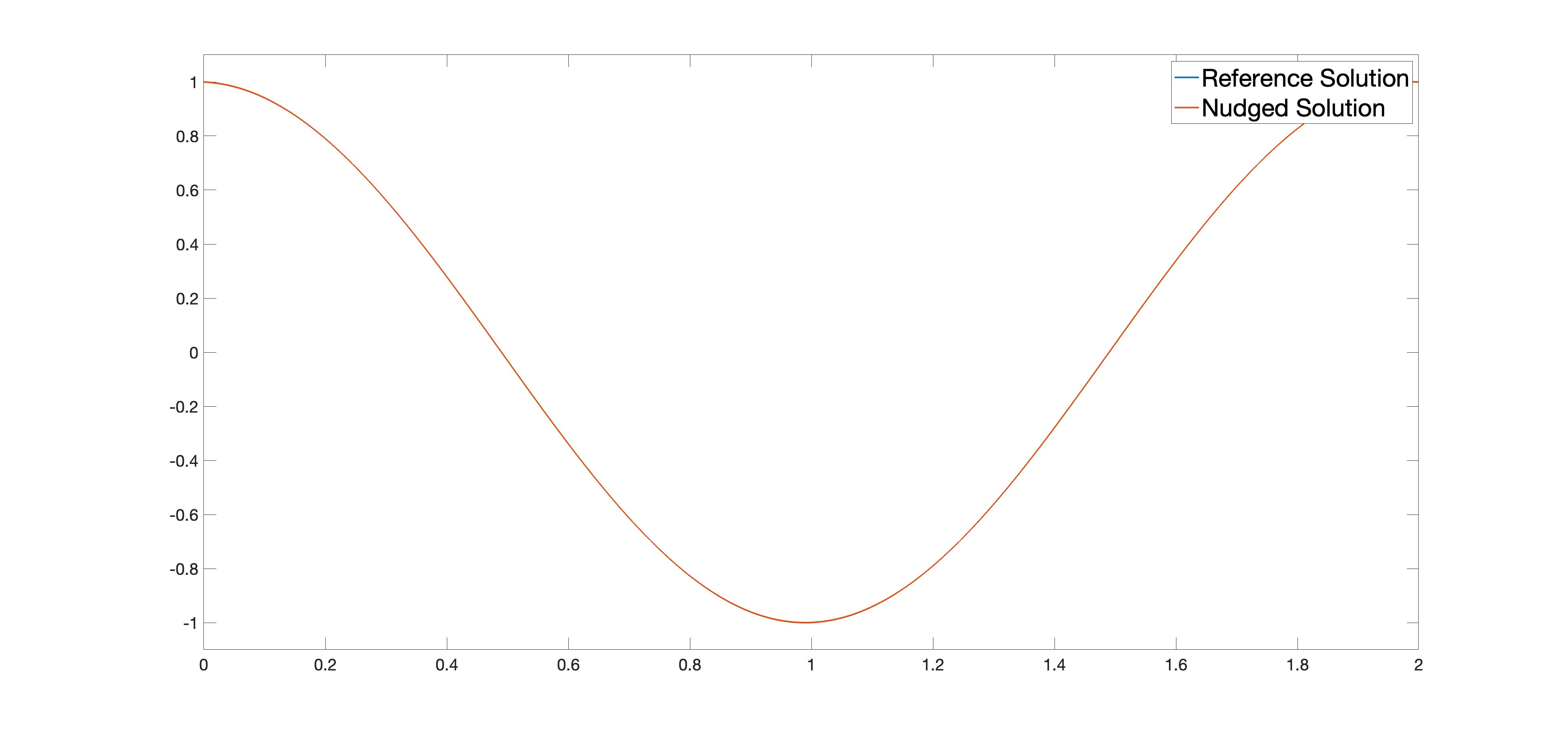}
        \caption{\small Inviscid Linear Transport equation}
        \label{fig:soln full ob:ILT}
    \end{subfigure}
    \hfill
    \begin{subfigure}[b]{.49\textwidth}
        \centering
        \includegraphics[width=\textwidth]{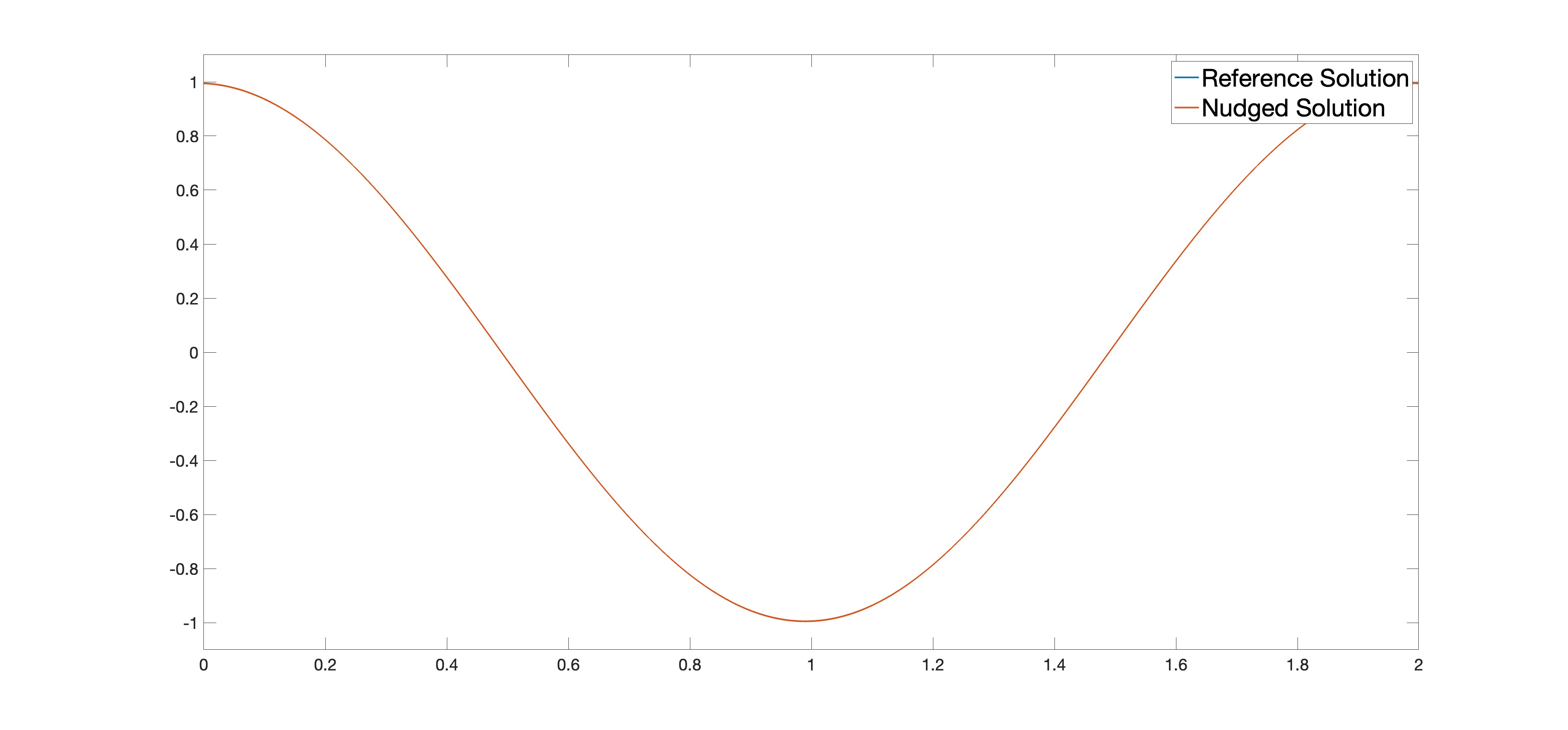}
        \caption{\small Viscous Linear Transport equation}
        \label{fig:soln full ob:VLT}
    \end{subfigure}
    \caption{\small Recovered initial condition using full observations.}
    \label{fig:soln full ob_LT}
\end{figure}

\begin{figure}
    \centering
    \begin{subfigure}[b]{.49\textwidth}
        \centering
        \includegraphics[width=\textwidth]{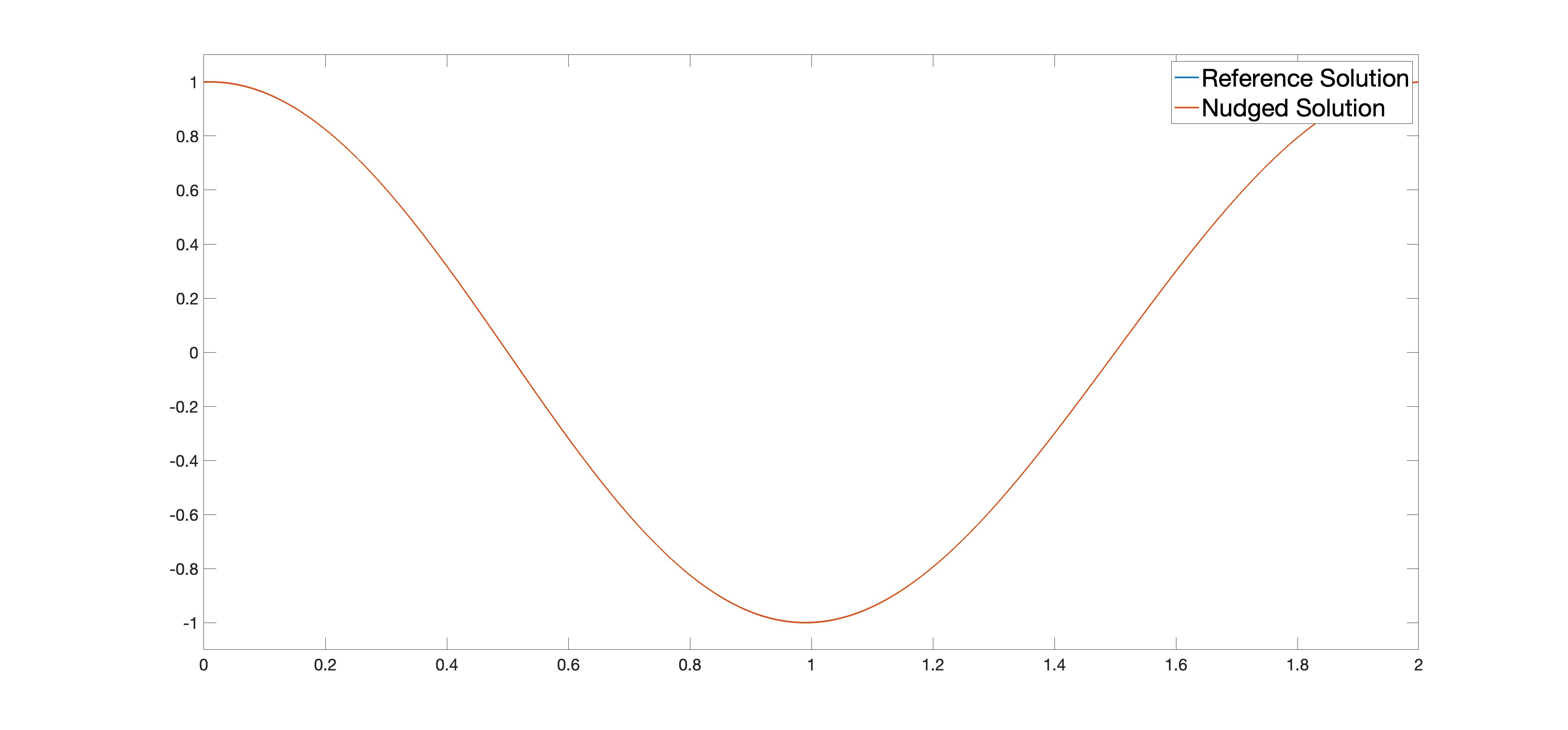}
        \caption{\small Inviscid Burgers equation}
        \label{fig:soln full ob:IB}
    \end{subfigure}
    \hfill
    \begin{subfigure}[b]{.49\textwidth}
        \centering
        \includegraphics[width=\textwidth]{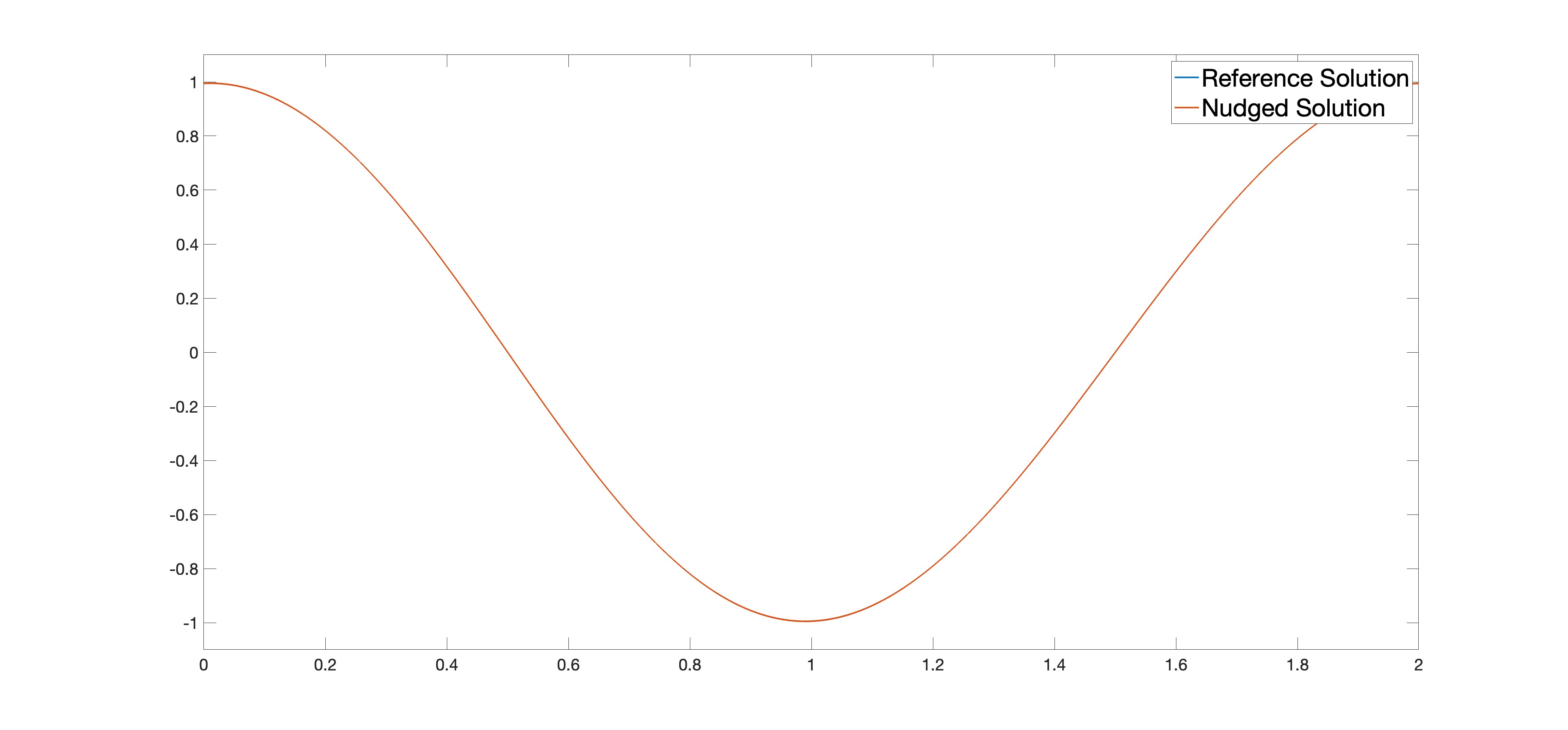}
        \caption{\small Viscous Burgers equation}
        \label{fig:soln full ob:VB}
    \end{subfigure}
    \caption{\small Recovered initial condition using full observations.}
    \label{fig:soln full ob_B}
\end{figure}

\begin{figure}
    \centering
    \begin{subfigure}[b]{.49\textwidth}
        \centering
        \includegraphics[width=\textwidth]{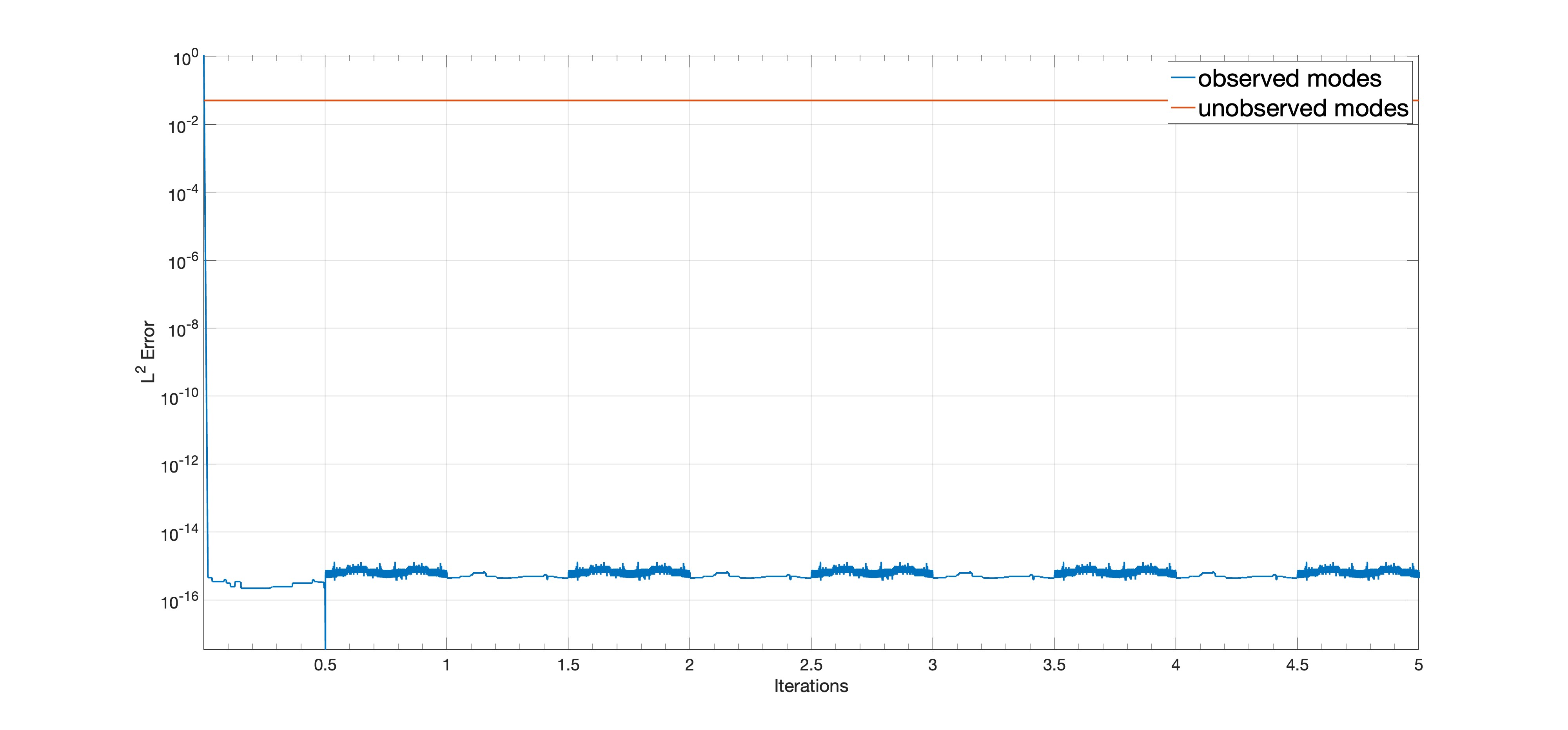}
        \caption{\small Inviscid Linear Transport equation}
        \label{fig:error partial ob:ILT}
    \end{subfigure}
    \hfill
    \begin{subfigure}[b]{.49\textwidth}
        \centering
        \includegraphics[width=\textwidth]{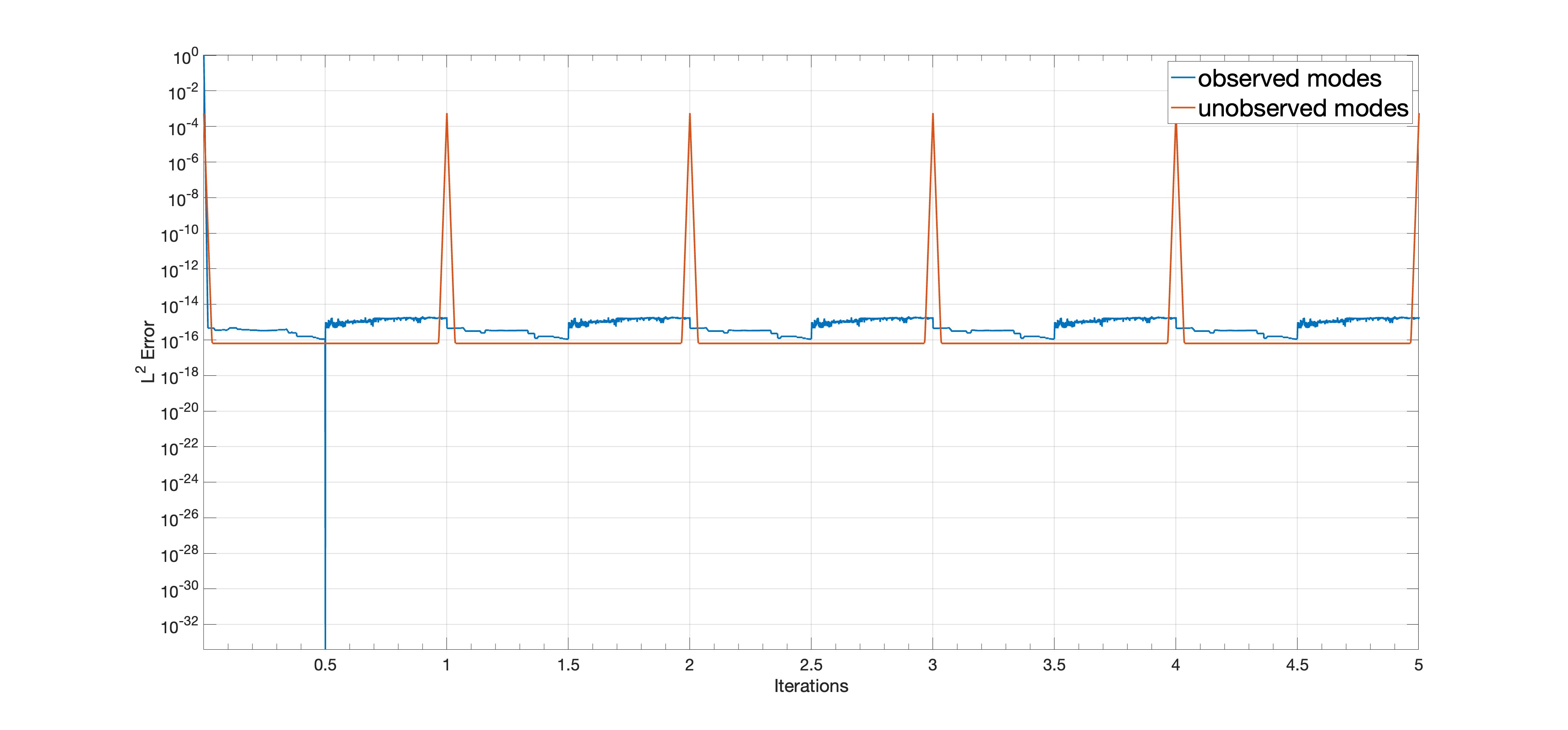}
        \caption{\small Viscous Linear Transport equation}
        \label{fig:error partial ob:VLT}
    \end{subfigure}
    \caption{\small $L^2_{\text{per}}$ error with low-mode observations ($|k| \leq M$).}
    \label{fig:error partial ob_LT}
\end{figure}

\begin{figure}
    \centering
    \begin{subfigure}[b]{.49\textwidth}
        \centering
        \includegraphics[width=\textwidth]{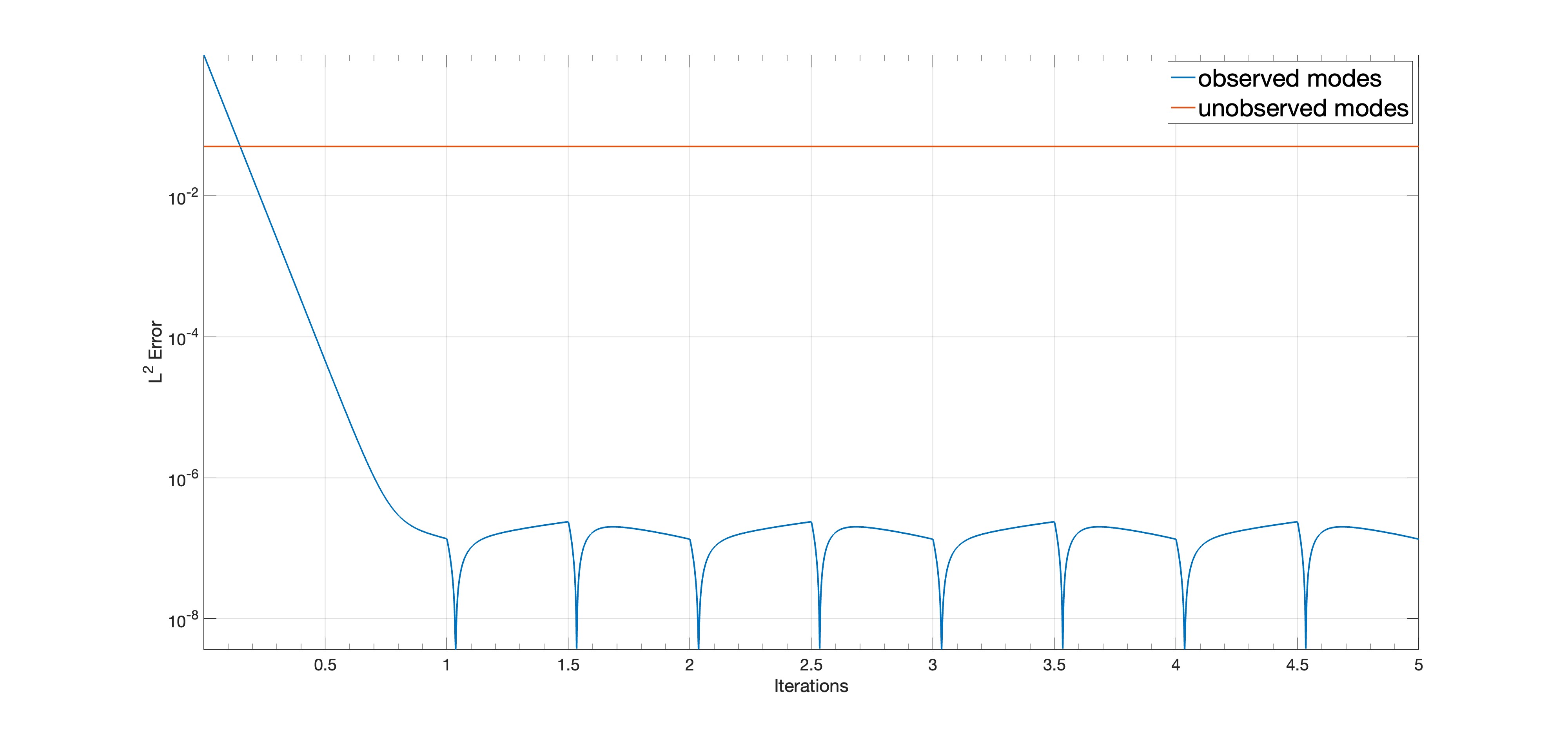}
        \caption{\small Inviscid Burgers equation}
        \label{fig:error partial ob:IB}
    \end{subfigure}
    \hfill
    \begin{subfigure}[b]{.49\textwidth}
        \centering
        \includegraphics[width=\textwidth]{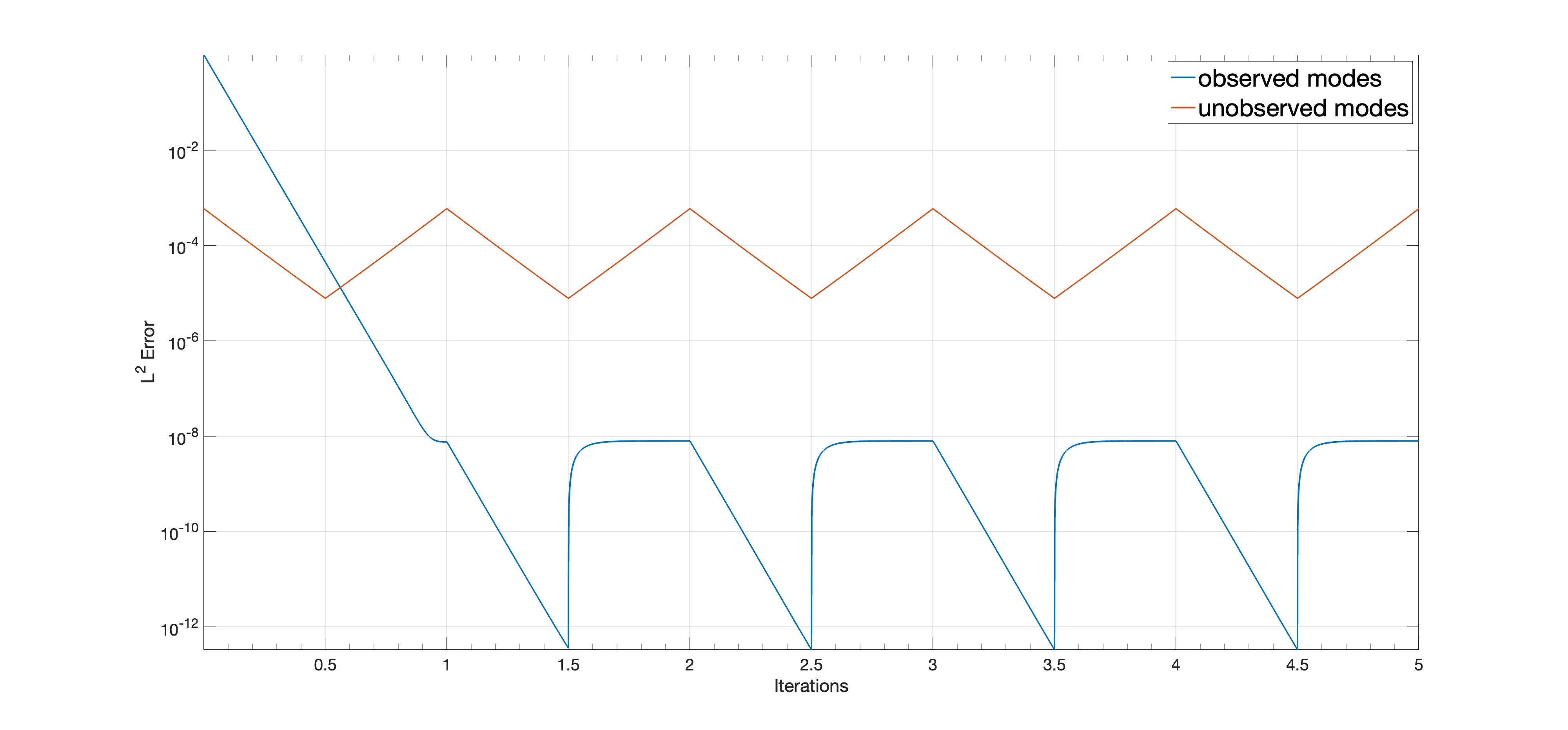}
        \caption{\small Viscous Burgers equation}
        \label{fig:error partial ob:VB}
    \end{subfigure}
    \caption{\small $L^2_{\text{per}}$ error with low-mode observations ($|k| \leq M$).}
    \label{fig:error partial ob_B}
\end{figure}

\begin{figure}
    \centering
    \begin{subfigure}[b]{.49\textwidth}
        \centering
        \includegraphics[width=\textwidth]{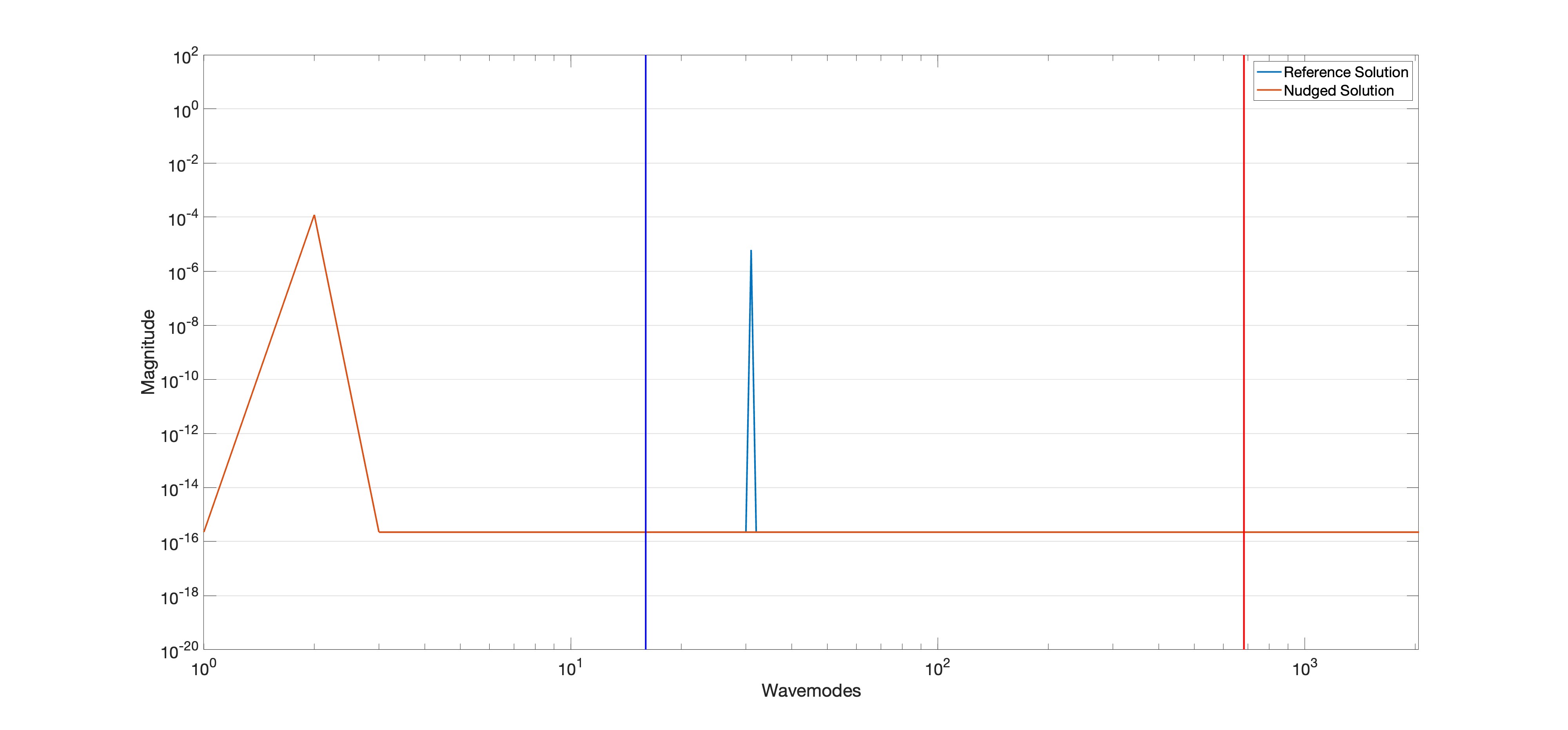}
        \caption{\small Inviscid Linear Transport equation}
        \label{fig:spec partial ob:ILT}
    \end{subfigure}
    \hfill
    \begin{subfigure}[b]{.49\textwidth}
        \centering
        \includegraphics[width=\textwidth]{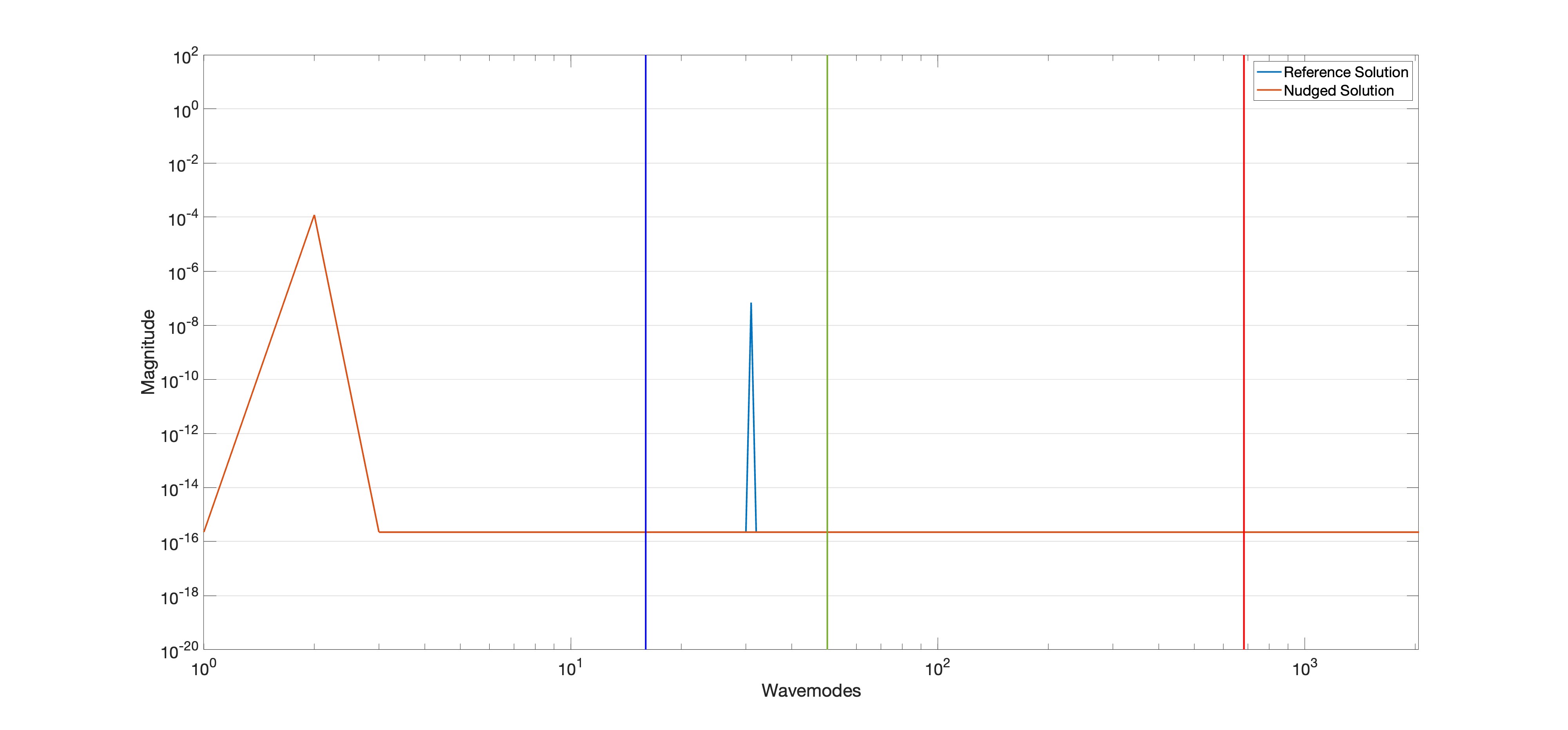}
        \caption{\small Viscous Linear Transport equation}
        \label{fig:spec partial ob:VLT}
    \end{subfigure}
    \caption{\small $L^2_{\text{per}}$ energy spectrum of the recovered initial condition using low-mode observations.}
    \label{fig:spec partial ob_LT}
\end{figure}

\begin{figure}
    \centering
    \begin{subfigure}[b]{.49\textwidth}
        \centering
        \includegraphics[width=\textwidth]{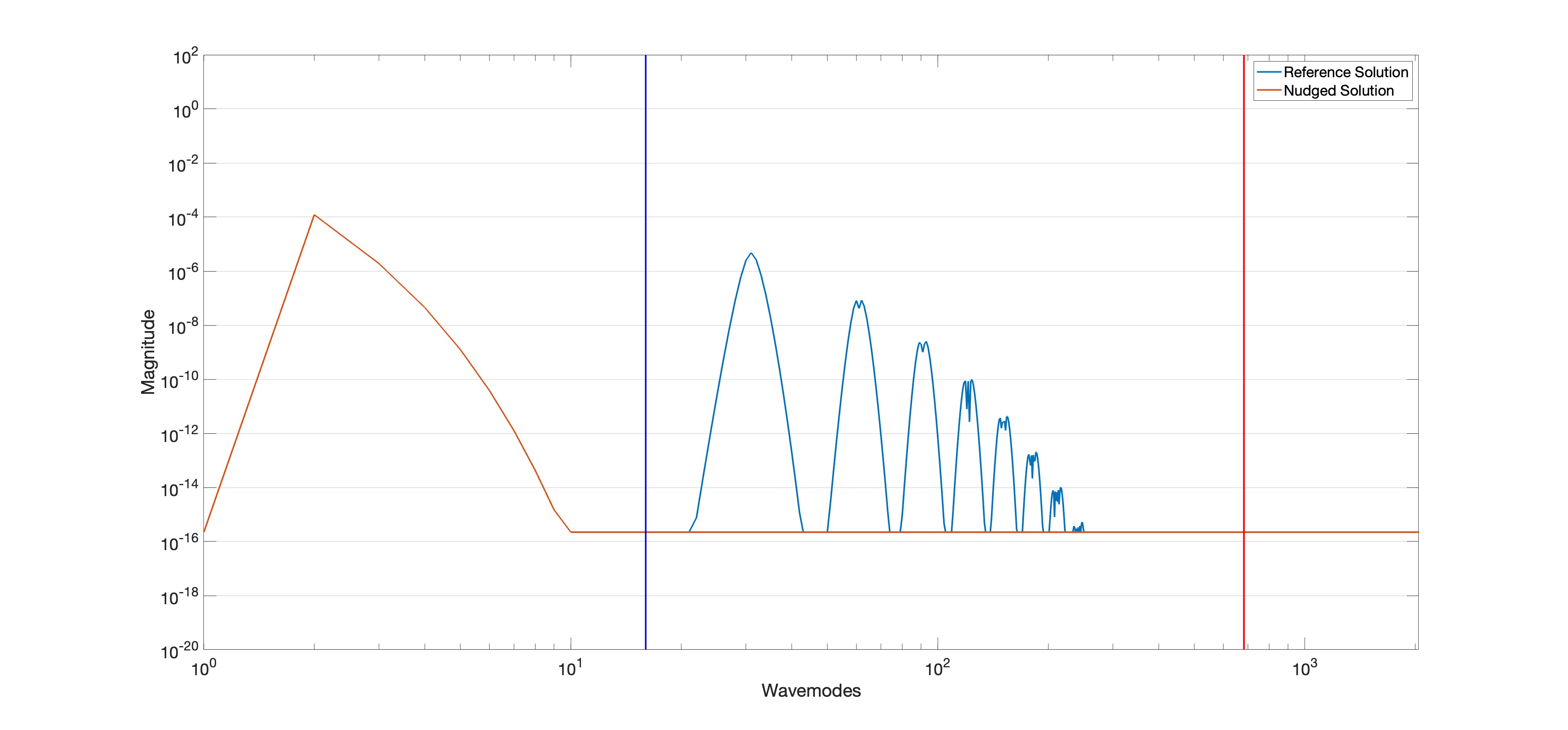}
        \caption{\small Inviscid Burgers equation}
        \label{fig:spec partial ob:IB}
    \end{subfigure}
    \hfill
    \begin{subfigure}[b]{.49\textwidth}
        \centering
        \includegraphics[width=\textwidth]{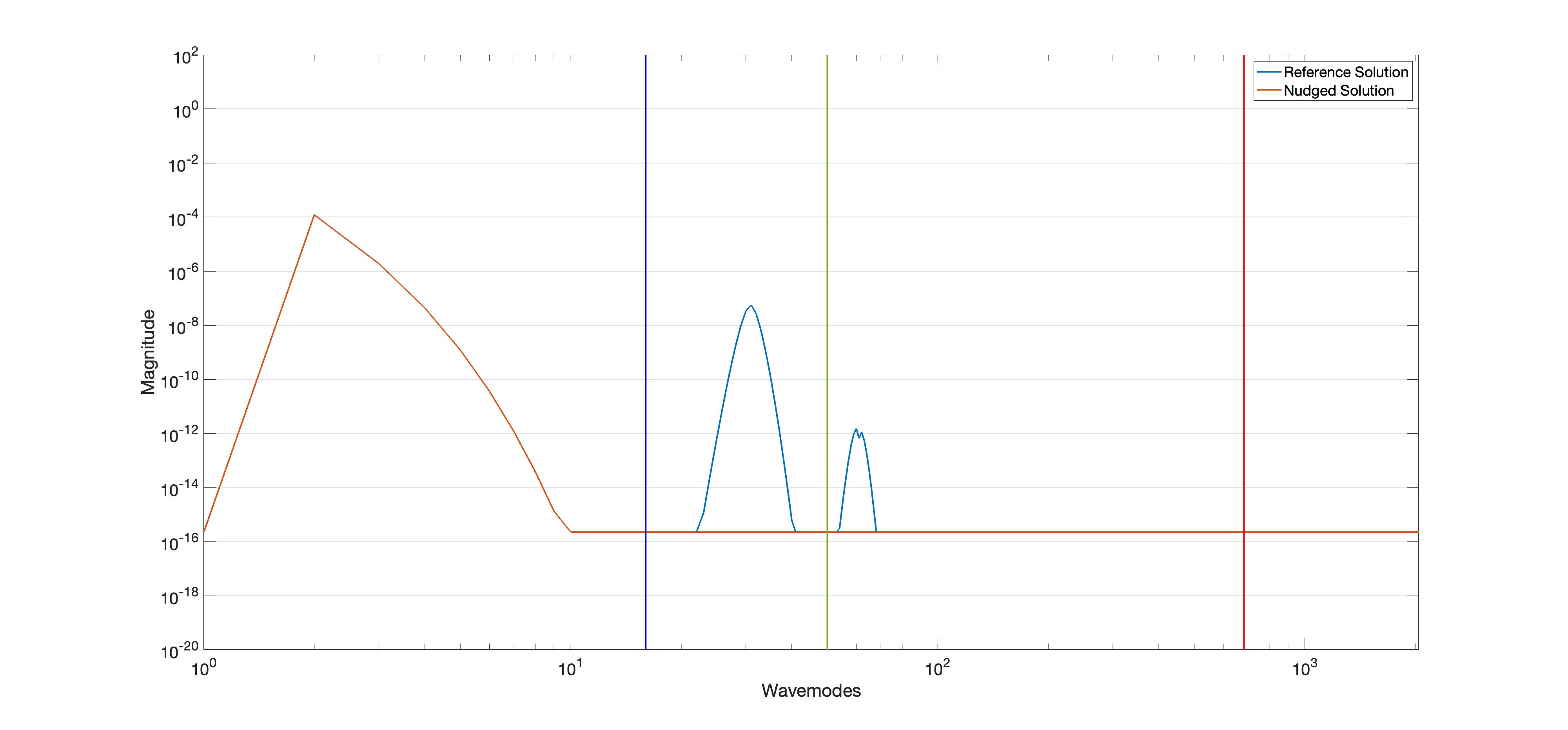}
        \caption{\small Viscous Burgers equation}
        \label{fig:spec partial ob:VB}
    \end{subfigure}
    \caption{\small $L^2_{\text{per}}$ energy spectrum of the recovered initial condition using low-mode observations.}
    \label{fig:spec partial ob_B}
\end{figure}

\begin{figure}
    \centering
    \begin{subfigure}[b]{.49\textwidth}
        \centering
        \includegraphics[width=\textwidth]{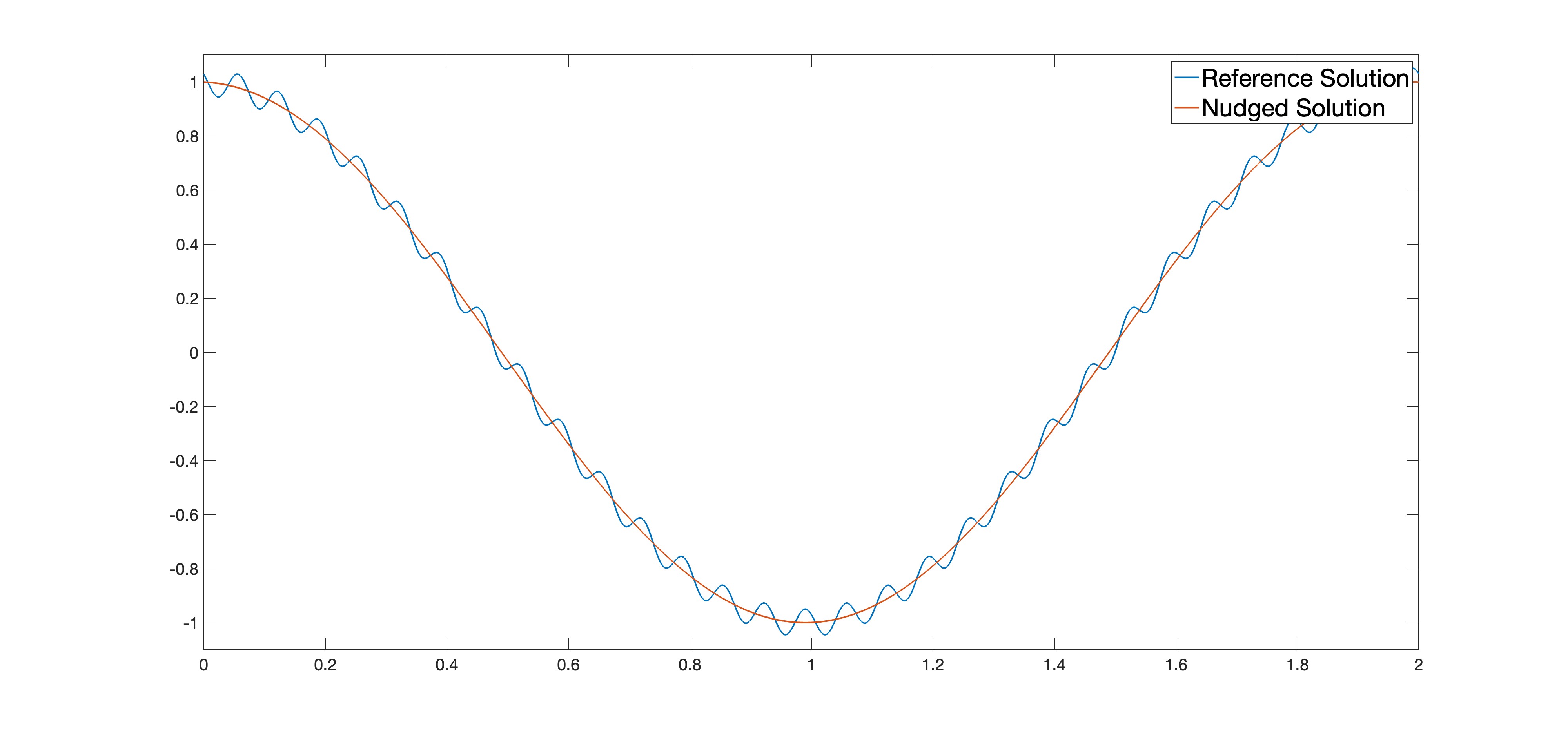}
        \caption{\small Inviscid Linear Transport equation}
        \label{fig:soln partial ob:ILT}
    \end{subfigure}
    \hfill
    \begin{subfigure}[b]{.49\textwidth}
        \centering
        \includegraphics[width=\textwidth]{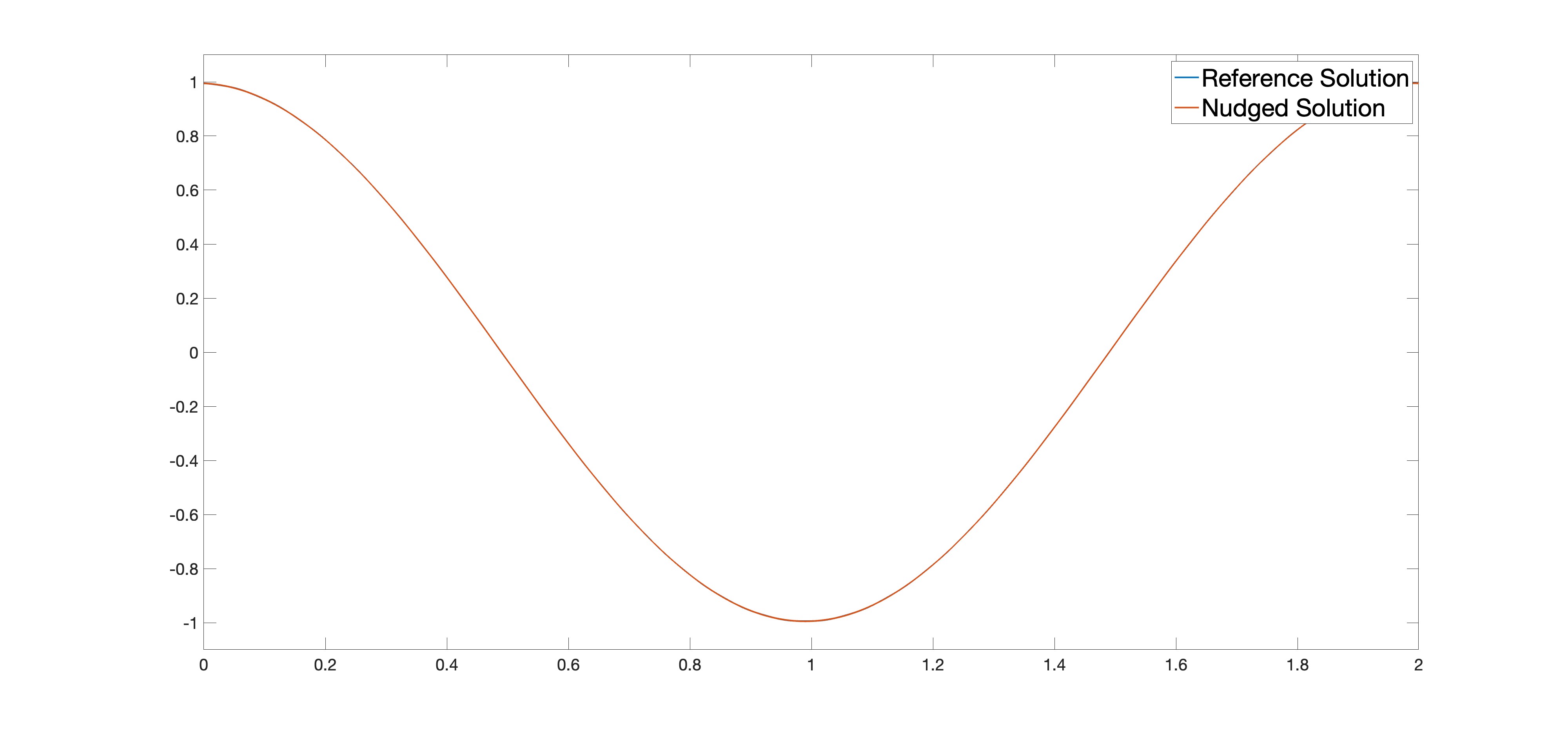}
        \caption{\small Viscous Linear Transport equation}
        \label{fig:soln partial ob:VLT}
    \end{subfigure}
    \caption{\small Recovered initial condition using low-mode observations.}
    \label{fig:soln partial ob_LT}
\end{figure}

\begin{figure}
    \centering
    \begin{subfigure}[b]{.49\textwidth}
        \centering
        \includegraphics[width=\textwidth]{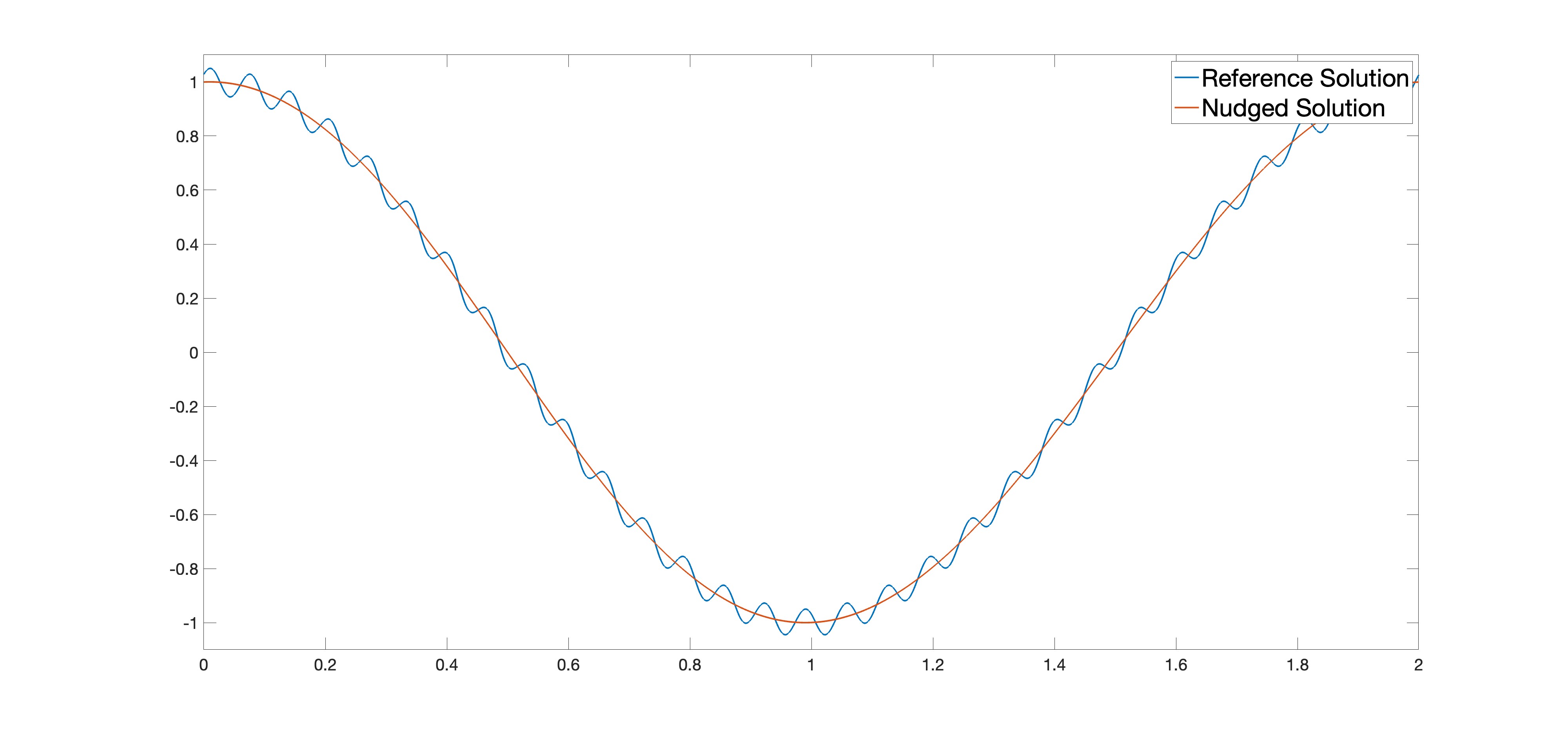}
        \caption{\small Inviscid Burgers equation}
        \label{fig:soln partial ob:IB}
    \end{subfigure}
    \hfill
    \begin{subfigure}[b]{.49\textwidth}
        \centering
        \includegraphics[width=\textwidth]{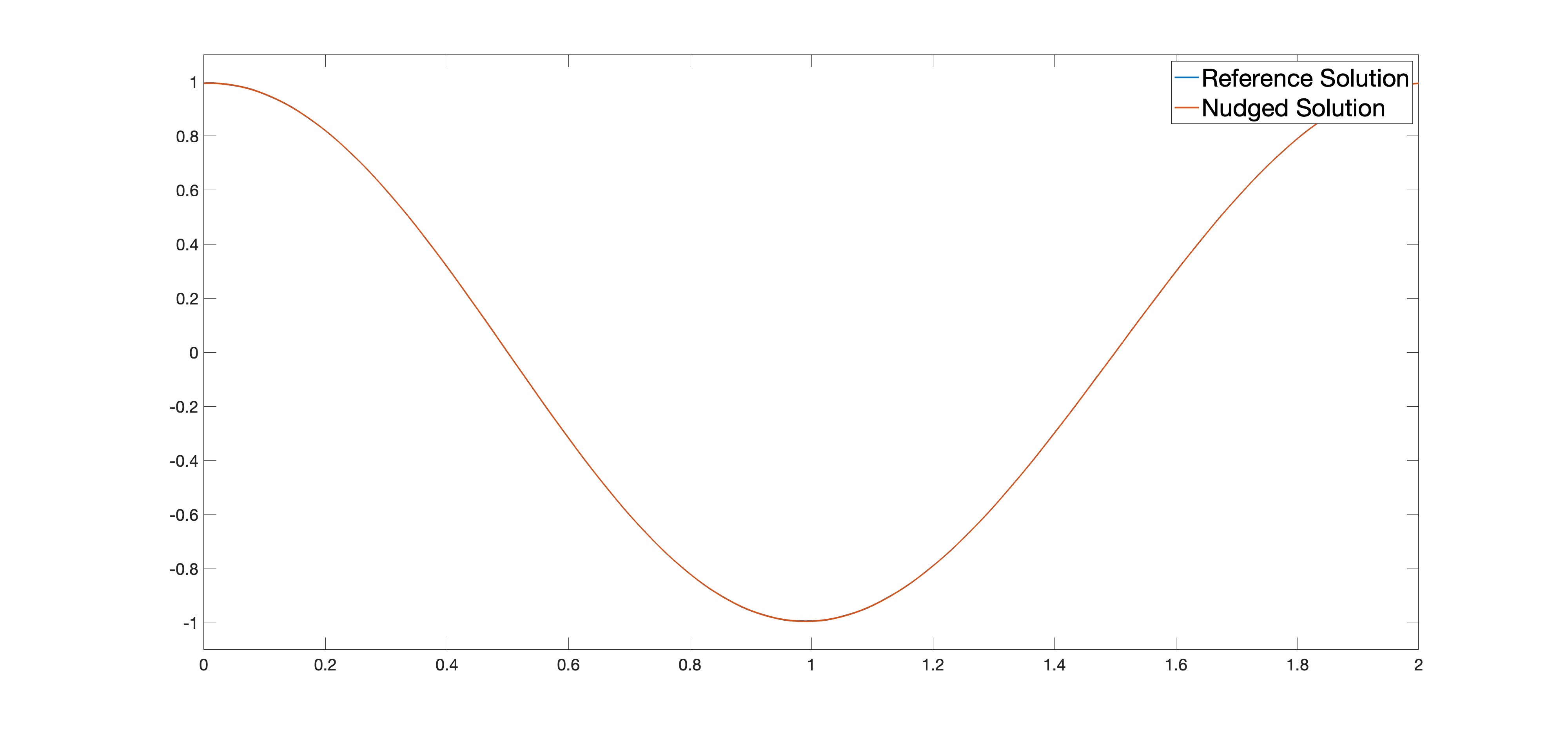}
        \caption{\small Viscous Burgers equation}
        \label{fig:soln partial ob:VB}
    \end{subfigure}
    \caption{\small Recovered initial condition using low-mode observations.}
    \label{fig:soln partial ob_B}
\end{figure}

\section{Diffusive and Voigt-Regularized BFN Algorithms}

Having established the fundamental limitations of the standard BFN algorithm using simplified 1D models, we now turn to practical strategies for modifying the algorithm to stabilize and regularize the backward evolution. In this section, we investigate two regularization variants: the established Diffusive BFN (dBFN) \cite{Auroux_Blum_Ruggiero_2016} and the newly proposed Voigt-regularized BFN (VBFN). We apply these methods to genuinely multiscale dynamics—specifically the 2D Navier-Stokes equations (NSE) and 1D viscous Korteweg-de Vries (KdV) equations—to determine the extent to which regularization can mitigate the ill-posedness inherent in the backward step.

\subsection{dBFN and VBFN for the 2D NSE}\label{sect:NSE}
We consider the 2D NSE given by \cref{eq:NSE} on the periodic domain $\Omega = [-\pi,\pi]^2$, subject to periodic boundary conditions. We will consider the case where the prescribed body force, $f$, is presumed to be constant in time.

Applying the BFN algorithm to the 2D NSE presents significant challenges due to the ill-posed nature of the backward-in-time equations. Specifically, reversing the sign of time renders the viscous term anti-diffusive, causing errors in high-frequency modes to grow exponentially and often resulting in numerical instability. To address this, we compare three variations of the backward nudging step: the Standard BFN, the Diffusive BFN (dBFN), and our newly proposed Voigt-regularized BFN (VBFN). The VBFN is the standard BFN algorithm applied to the Voigt-regularized NSE (\cref{eq:NSE:VOIGT}) to stabilize the backward step.

Representing the 2D NSE generically as in \cref{eq:generic system}, with the operator $F$:
\begin{equation}
    F(u) = \nu \Delta u - (u \cdot \nabla) u - \nabla p + f,
\end{equation}
the corresponding forward (F) and backward variations are given by:
\begin{subequations}
\begin{align}
\text{(F)}\quad &
\begin{cases}
    v_t^n = F(v^n) + \mu P_M(u - v^n),\\
    v^n(0) = v_0^n := \tilde{v}^{n-1}(0),
\end{cases}\label{eq:NSE:F}\\
\text{(Standard)}\quad &
\begin{cases}
    \tilde{v}_t^n = F(\tilde{v}^n) - \mu P_M(u - \tilde{v}^n),\\
    \tilde{v}^n(T) = \tilde{v}^n_T := v^n(T),
\end{cases}\label{eq:NSE:B}\\
\text{(Diffusive)}\quad &
\begin{cases}
    \tilde{v}_t^n = F(\tilde{v}^n) - 2\nu \Delta \tilde{v}^n - \mu P_M(u - \tilde{v}^n),\\
    \tilde{v}^n(T) = \tilde{v}^n_T := v^n(T),
\end{cases}\label{eq:NSE:diff-B}\\
\text{(Voigt)}\quad &
\begin{cases}
    (I - \alpha^2 \Delta) \tilde{v}_t^n = F(\tilde{v}^n) - \mu P_M(u - \tilde{v}^n),\\
    \tilde{v}^n(T) = \tilde{v}^n_T := v^n(T).
\end{cases}\label{eq:NSE:Voigt-B}
\end{align}
\end{subequations}
Here, $P_M$ denotes the projection onto Fourier modes with wavevectors $k \in \mathbb{Z}^2$ satisfying $|k| \leq M$.

Equations \cref{eq:NSE:B,eq:NSE:diff-B,eq:NSE:Voigt-B} represent distinct formulations of the backward pass. \Cref{eq:NSE:B} corresponds to the Standard BFN, which preserves the original dynamics but is susceptible to exponential instabilities in the high modes. \Cref{eq:NSE:diff-B} employs the Diffusive BFN (dBFN) strategy \cite{Auroux_Blum_2008, Auroux_2009}, in which the sign of the viscous term is reversed to enforce well-posedness, effectively smoothing the solution during the backward pass.

The VBFN algorithm in \cref{eq:NSE:Voigt-B} leverages the inviscid regularization framework \cite{oskolkov1973, Cao_Lunasin_Titi_2006, kalantarov2009}. The primary motivation for this approach is the global well-posedness of Voigt-regularized equations, \cref{eq:NSE:VOIGT}, in both forward and backward time \cite{Cao_Lunasin_Titi_2006, kalantarov2009}. In contrast to dBFN—which achieves stability via artificial viscosity—VBFN maintains the physical dissipative structure while using the regularization term $(I - \alpha^2 \Delta)$ to mollify the nonlinear term and inhibit the cascade of energy to higher modes \cite{Levantetal2010}. By tempering the growth rate of small scales, VBFN ensures numerical stability and prevents blow-up while preserving the turbulent energy spectrum up to a wavenumber proportional to $1/\alpha$.

Finally, we introduce two alternative algorithms that modify the dBFN and VBFN schemes by restricting the regularization to unobserved spectral modes. These \textit{Filtered} variants are defined as follows:
\begin{align}
\text{(Filtered Diffusive)}\quad &
\begin{cases}
    \tilde{v}_t^n = F(\tilde{v}^n) - 2\nu \Delta Q_M \tilde{v}^n - \mu P_M(u - \tilde{v}^n),\\
    \tilde{v}^n(T) = \tilde{v}^n_T := v^n(T),
\end{cases}\label{eq:NSE:fdiff-B}\\
\text{(Filtered Voigt)}\quad &
\begin{cases}
    (I - \alpha^2 \Delta Q_M) \tilde{v}_t^n = F(\tilde{v}^n) - \mu P_M(u - \tilde{v}^n),\\
    \tilde{v}^n(T) = \tilde{v}^n_T := v^n(T),
\end{cases}\label{eq:NSE:fVoigt-B}
\end{align}
where $Q_M = I - P_M$ is the orthogonal projection onto unobserved modes. While numerical stability strictly requires regularization only on the highest wavenumbers, it is natural to align the spectral filter with the observational cutoff $M$. This modification aims to improve the reconstruction of the initial condition by eliminating model error on the observed Fourier modes while maintaining the stabilizing effects of the dBFN and VBFN algorithms on unobserved modes.

\subsubsection{Numerical methods and experimental setup}\label{sect:NSE:numerics} 
Numerical simulations for this study were conducted using a fully dealiased pseudo-spectral code based on the vorticity-stream function formulation. Specifically, the 2D NSE \cref{eq:NSE} is is equivalent to its vorticity formulation given by:
\begin{subequations}
\begin{align}
\psi_t + \Delta^{-1}(u \cdot \nabla \omega) &= \nu \Delta \psi + \Delta^{-1} \nabla^{\perp} \cdot f, \\
u := \nabla^{\perp}\psi = \binom{\partial_y\psi}{-\partial_x\psi} \quad &\text{and} \quad \omega := \partial_{x}u_2 - \partial_yu_1 = -\Delta \psi.
\end{align}
\end{subequations}
The inverse Laplacian $\Delta^{-1}$ is defined with respect to periodic boundary conditions under the mean-free constraint. For an extensive overview of pseudo-spectral methods, we refer the reader to, e.g., \cite{Canuto_Hussaini_Quarteroni_Zang_2006, Peyret_2013_spectral_book, Shen_Tang_Wang_2011}.

Spatial derivatives were computed in the spectral domain, while time integration was performed using a first-order integrating factor Euler scheme. The linear dissipative term was integrated exactly using an exponential integrating factor, while the nonlinear advection and nudging terms were treated explicitly. A constant time step of $\Delta t = 0.001$ was employed. Initial data for the reference solution $u$ was generated by evolving the system from a zero state until a statistically steady state was reached at $t = 25,000$. As illustrated in \cref{fig:Spectrum:NSE}, the energy spectrum of the initial data is well-resolved and remained so throughout the duration of the simulation.

The simulations utilized a spatial resolution of $N^2 = 1,024^2$, which was sufficient to resolve the energy spectrum of the forward simulations to machine precision before the 2/3 dealiasing cutoff. Following the parameters in \cite{Biswas_Bradshaw_Jolly2023}, we set the kinematic viscosity to $\nu = 10^{-4}$ and use the same forcing $f$ prescribed in \cite{Gesho_Olson_Titi_2015}. This forcing was scaled to achieve a Grashof number of $G = 10^6$, where $G = \frac{\|f\|_{L^2}}{\nu^2}$ is the dimensionless ratio of external forcing to viscous forces.

To evaluate the data assimilation methods, we continuously observed the solution $u$ of \cref{eq:NSE} over the time interval $t \in [0, 0.1]$. Observations were restricted to low-mode Fourier components satisfying $|\mathbf{k}| \leq M$, with a truncation level of $M=50$. For the VBFN simulations, we selected a regularization parameter of $\alpha = 10^{-3}$, determined to be sufficient through preliminary testing. The results are presented in \cref{fig:L2 error:50 modes: 3 methods,fig:Ens error:50 modes: 3 methods,fig:error:all modes: 3 methods}. In these trials, even iterations correspond to the error evolution of the forward algorithm, while odd iterations correspond to the backward algorithm.

To quantify reconstruction accuracy, we monitor the error in both the $L^2$ norm and the $H^1$ semi-norm, representing errors at the energy and enstrophy levels, respectively. We decompose these metrics into contributions from the observed modes ($|\mathbf{k}| \leq M$) and the unobserved modes ($|\mathbf{k}| > M$) to isolate the algorithm's capability to reconstruct the missing portion of the spectrum.

\subsubsection{Computational Results: 2D NSE}

In our short-interval simulations, the Standard BFN algorithm performs comparably to the VBFN. While the inherent instability of the backward evolution equations is expected to induce significant errors over longer time intervals of simulation, this effect can be mitigated in practice by restricting the length of the assimilation window.

However, distinct differences emerge when analyzing the error distribution across the observed scales. As shown in \cref{fig:L2 error:50 modes: 3 methods}, the dBFN algorithm introduces significant errors into the observed Fourier modes. This degradation is anticipated, as dBFN fundamentally alters the model physics; initializing the algorithm with the exact solution of \cref{eq:NSE} would still yield an inaccurate estimate after a single iteration due to the model error introduced by the reversed viscosity. While this discrepancy could potentially be managed by tuning the viscosity parameter in \cref{eq:NSE:diff-B}, we adhere to the standard formulation in this study.

We observe that VBFN achieves lower error levels in both the $L^2$ norm and enstrophy compared to dBFN. Nevertheless, none of the methods achieve accuracy in the observed modes below $10^{-6}$ in these simulations. It is worth noting that the error in the observed modes could likely be further reduced by implementing the nudging term implicitly and increasing the nudging parameter $\mu$, though we do not pursue that optimization here. 

The model discrepancy is further highlighted in the limiting case where the entire solution is observed. As seen in \cref{fig:error:all modes: 3 methods}, VBFN outperforms dBFN even when full observations are available. While full-state observation renders data assimilation unnecessary in practice, this comparison serves to illustrate that the dBFN algorithm introduces significantly larger model errors than the VBFN.

To address the bias inherent in the dBFN approach, we examine the filtered variants. As shown in \cref{fig:L2 error:50 modes: filtered,fig:Ens error:50 modes: filtered}, the Filtered dBFN algorithm yields a significant reduction in low-mode error for both energy and enstrophy compared to the standard dBFN, effectively mitigating the model error on the resolved scales.

Crucially, these numerical results for the 2D Navier-Stokes equations reinforce the fundamental limitations identified in our simpler models. Even when the BFN algorithm is stabilized by VBFN or dBFN—and further refined with spectral filtering—it remains unable to reconstruct the unobserved high-frequency wavemodes. As evidenced by the high-mode error evolution in \cref{fig:L2 error:50 modes: filtered,fig:Ens error:50 modes: filtered}, none of the methods provide significant improvement in recovering the unobserved portion of the initial condition spectrum. Instead, the error in the high modes oscillates with each iteration without converging toward the true initial state.

\begin{figure}
  \centering
	\includegraphics[width=.9\textwidth]{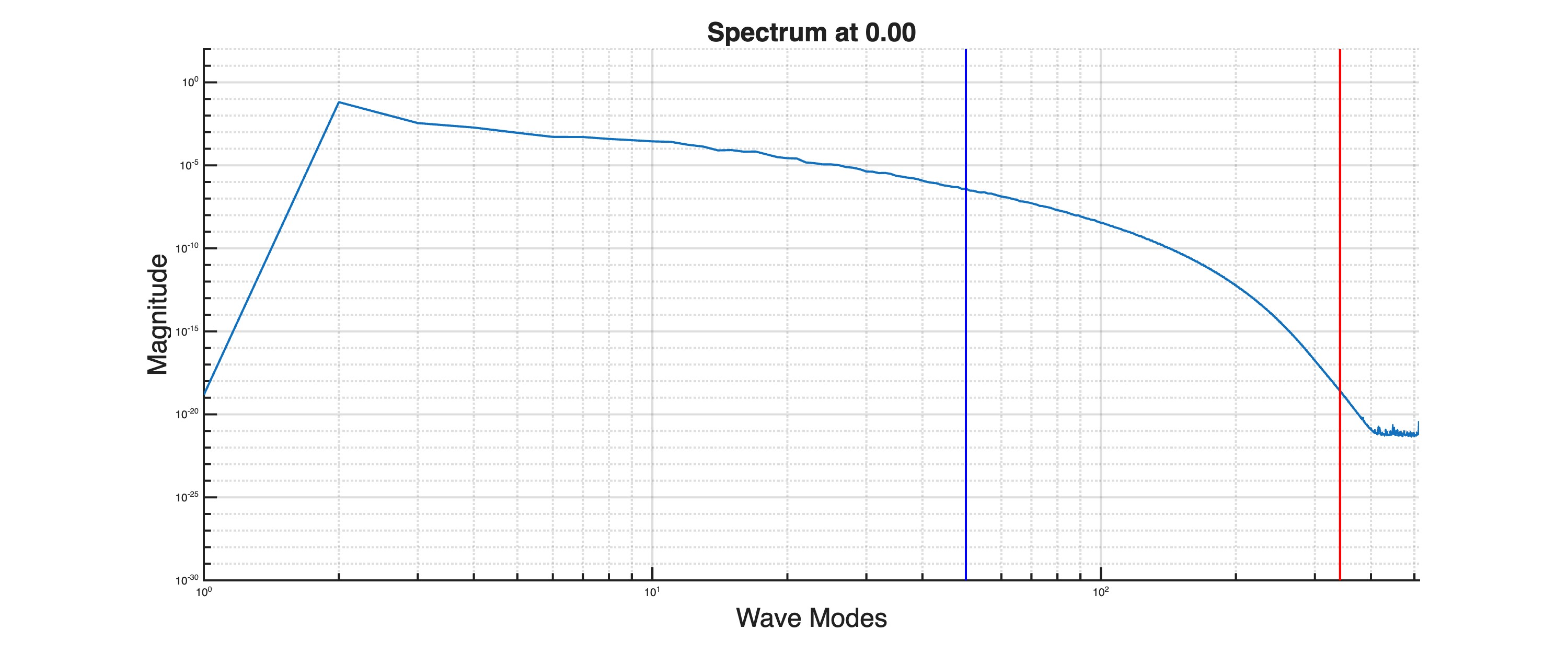}
\caption{\small Energy spectrum of the initial data. The vertical red line is the 2/3's dealiasing cutoff at $\frac23\frac{N}{2}=341.\overline{3}$.}
 \label{fig:Spectrum:NSE}
\end{figure}

 \begin{figure}
 \begin{subfigure}[b]{.49\textwidth}
  \centering
	\includegraphics[width=\textwidth]{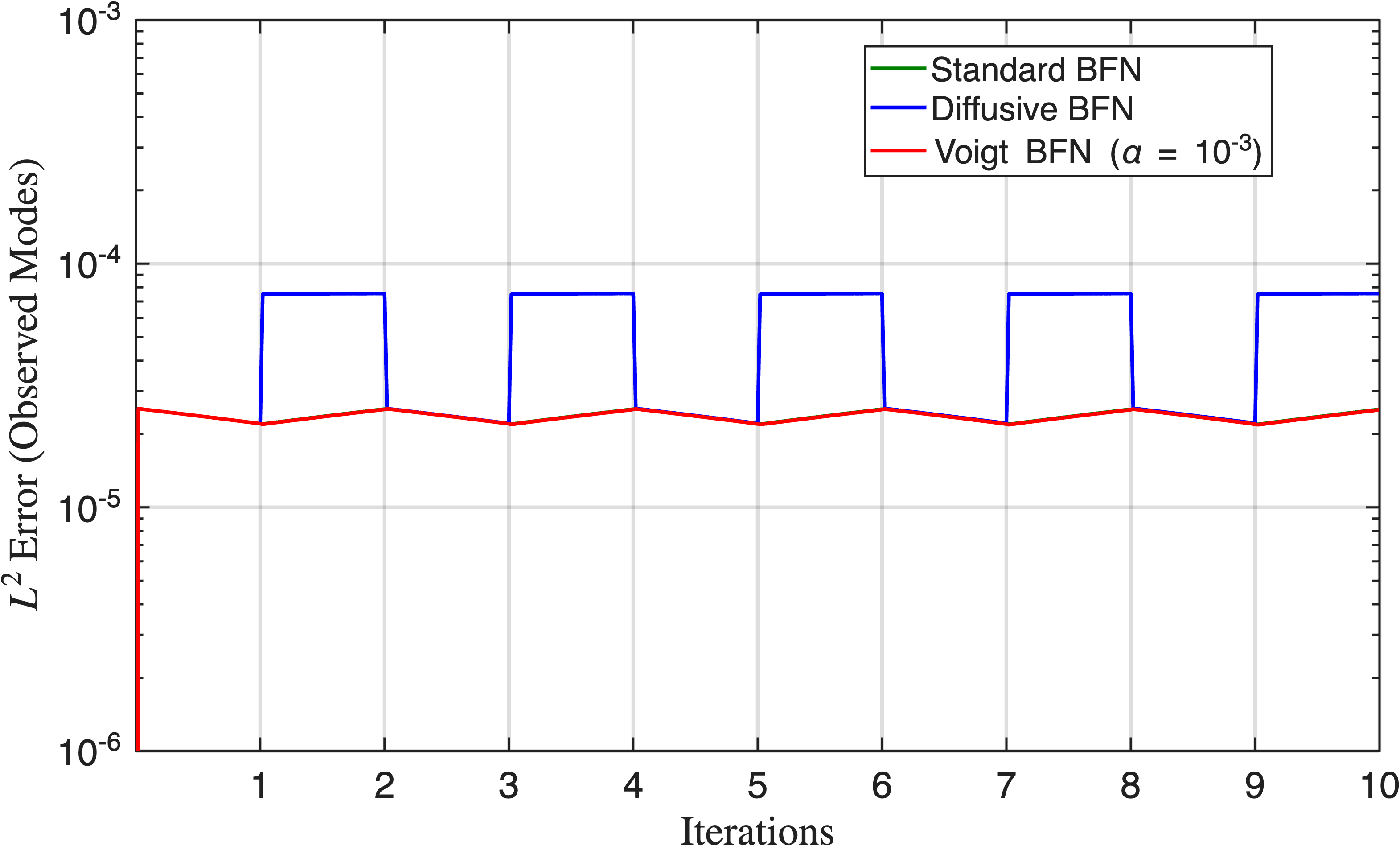}
 \end{subfigure}
 \begin{subfigure}[b]{.49\textwidth}
  \centering
	\includegraphics[width=\textwidth]{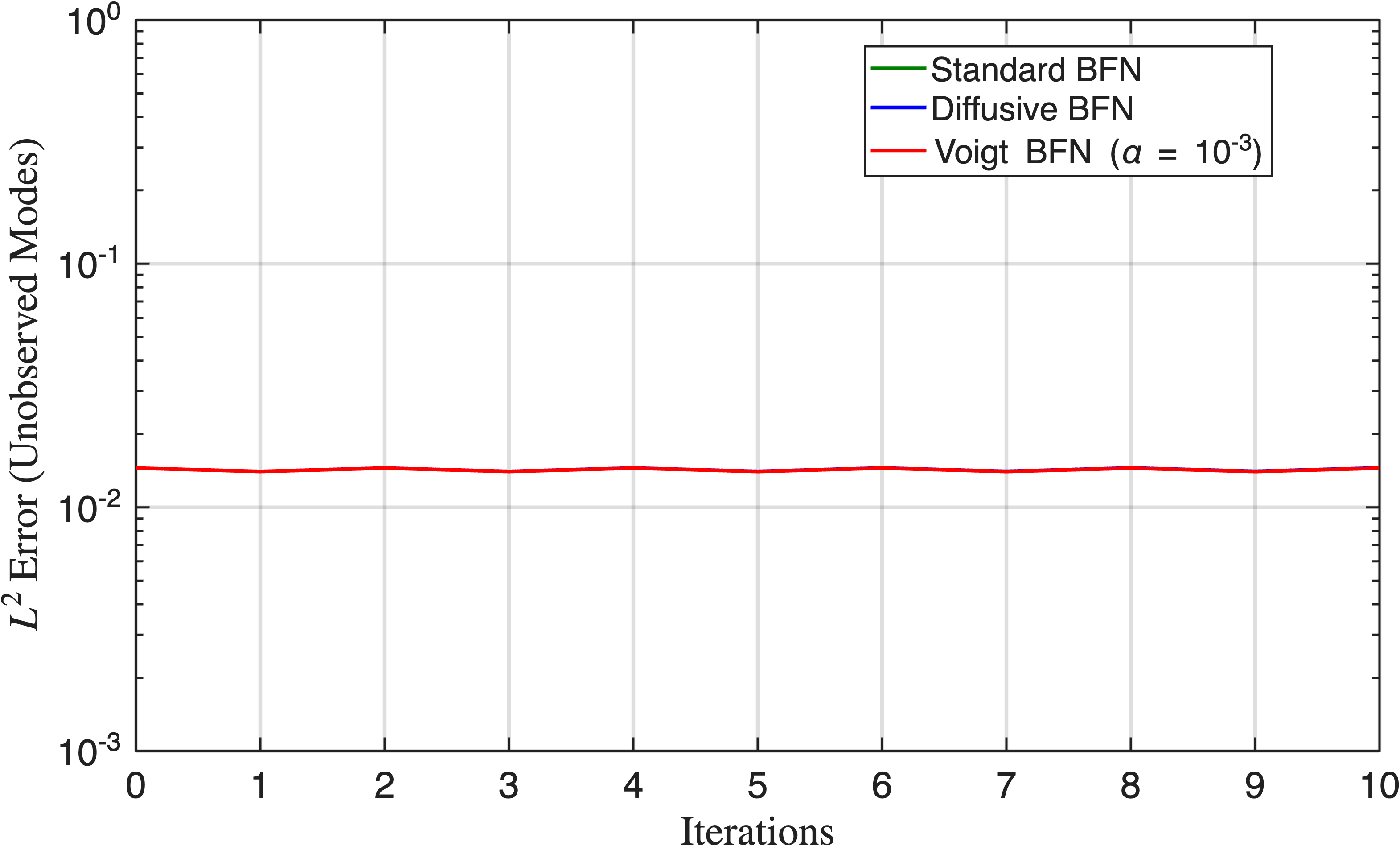}
 \end{subfigure}
 \caption{ \small $L^2$ error evolution for 2D NSE.}
 \label{fig:L2 error:50 modes: 3 methods}
\end{figure}

 \begin{figure}
 \begin{subfigure}[b]{.49\textwidth}
  \centering
	\includegraphics[width=\textwidth]{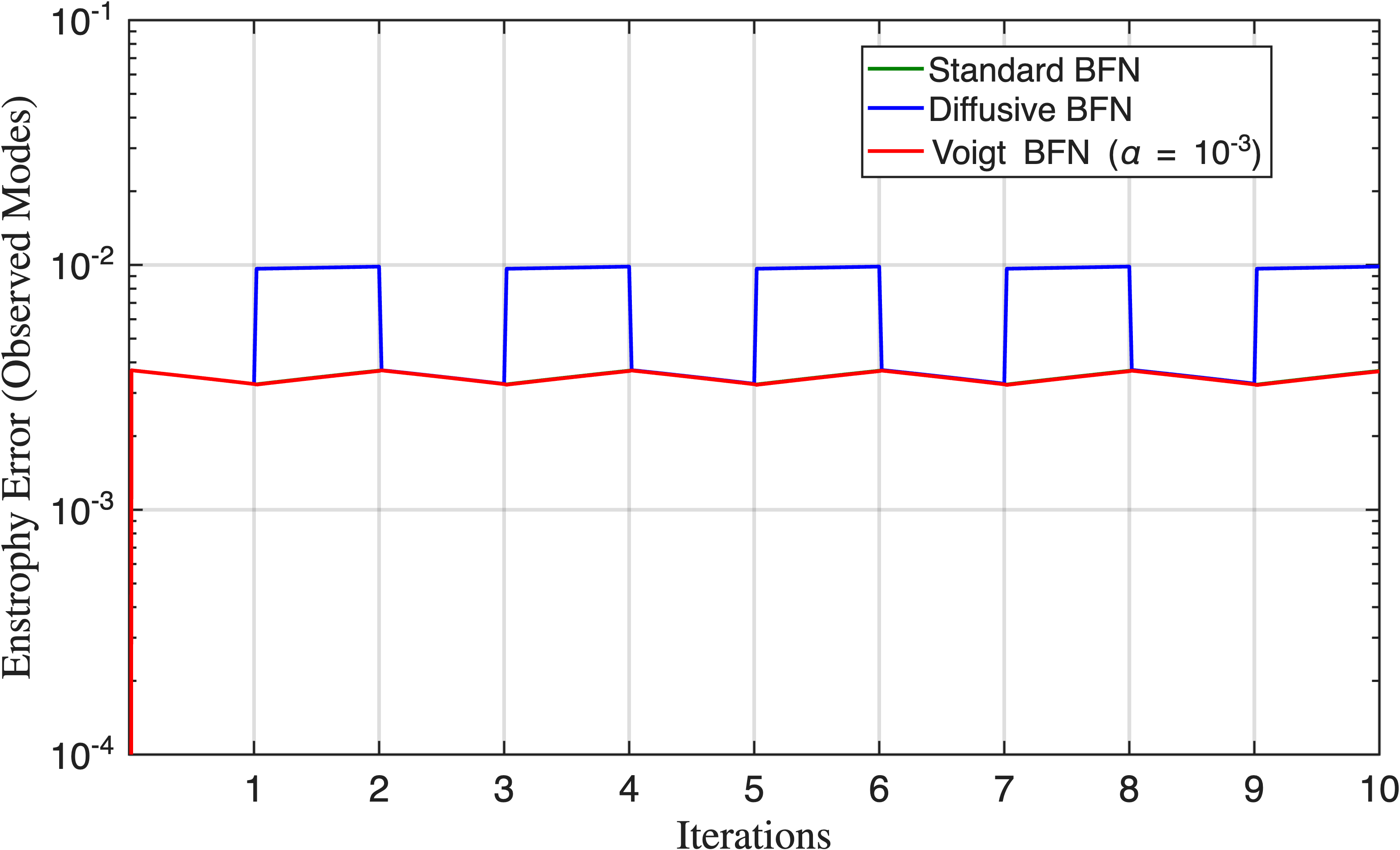}
 \end{subfigure}
 \begin{subfigure}[b]{.49\textwidth}
  \centering
	\includegraphics[width=\textwidth]{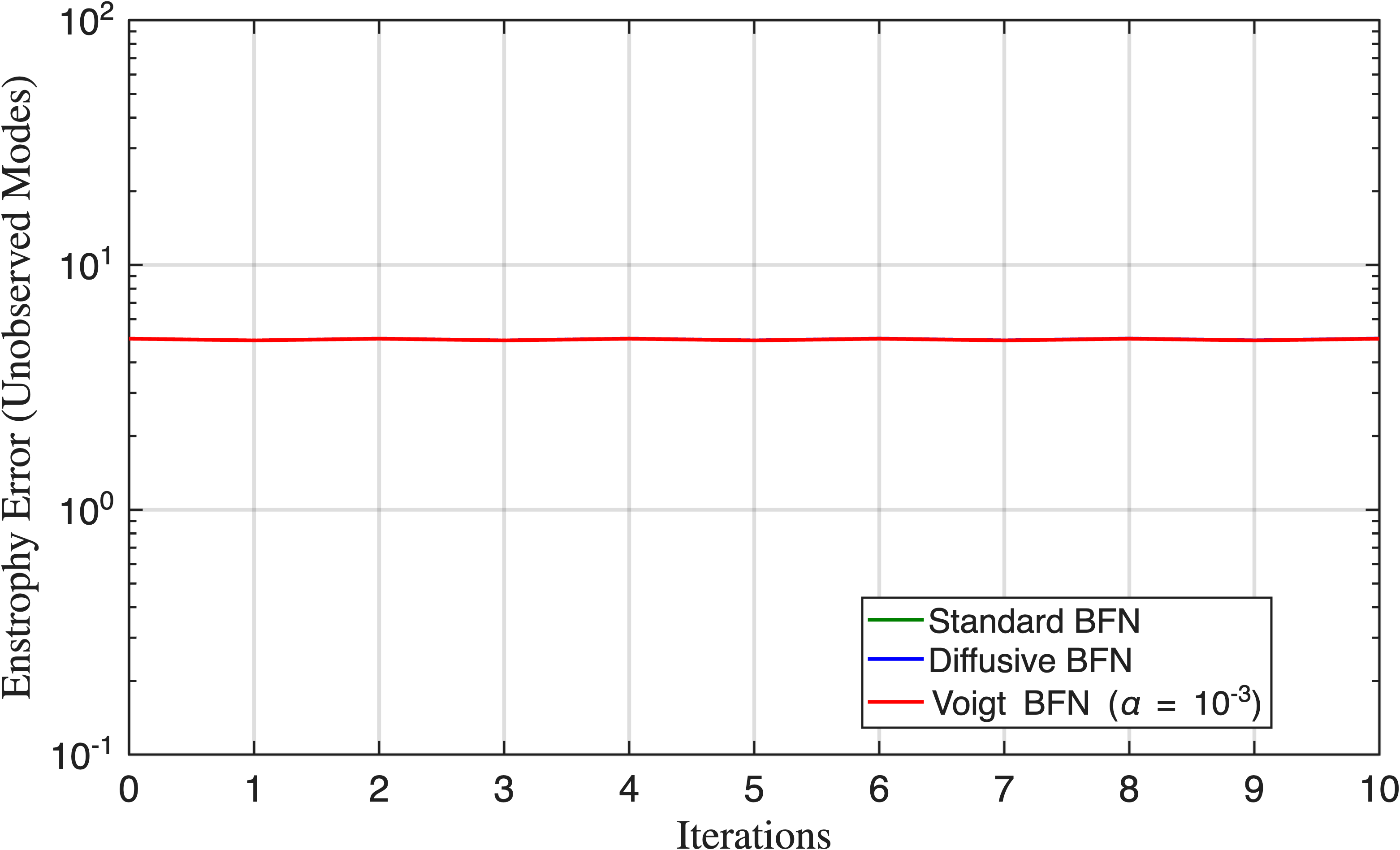}
 \end{subfigure}
 \caption{ \small Enstrophy error evolution for 2D NSE. For observations.}
 \label{fig:Ens error:50 modes: 3 methods}
\end{figure}

 \begin{figure}
 \begin{subfigure}[b]{.49\textwidth}
  \centering
	\includegraphics[width=\textwidth]{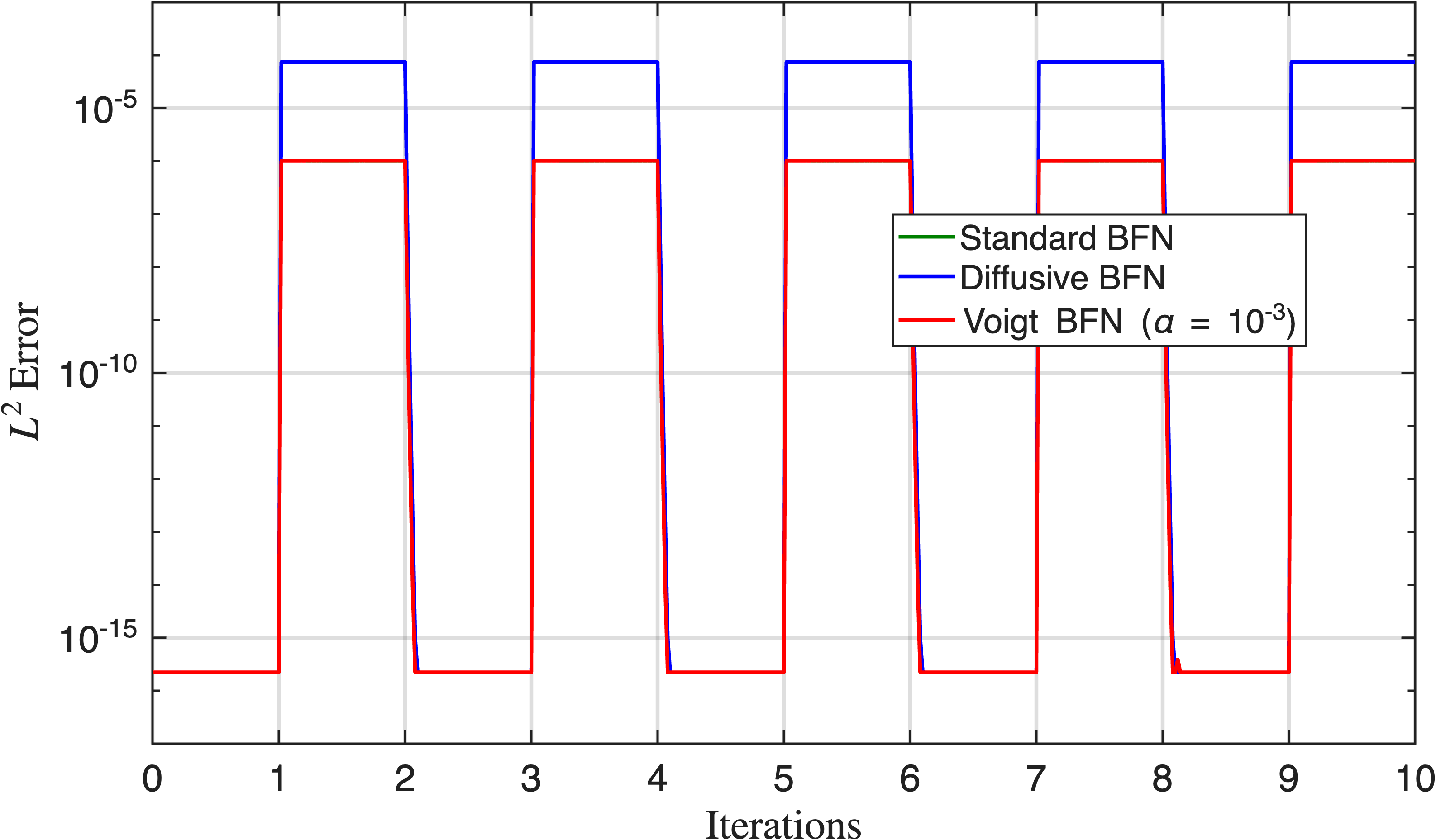}
 \end{subfigure}
 \begin{subfigure}[b]{.49\textwidth}
  \centering
	\includegraphics[width=\textwidth]{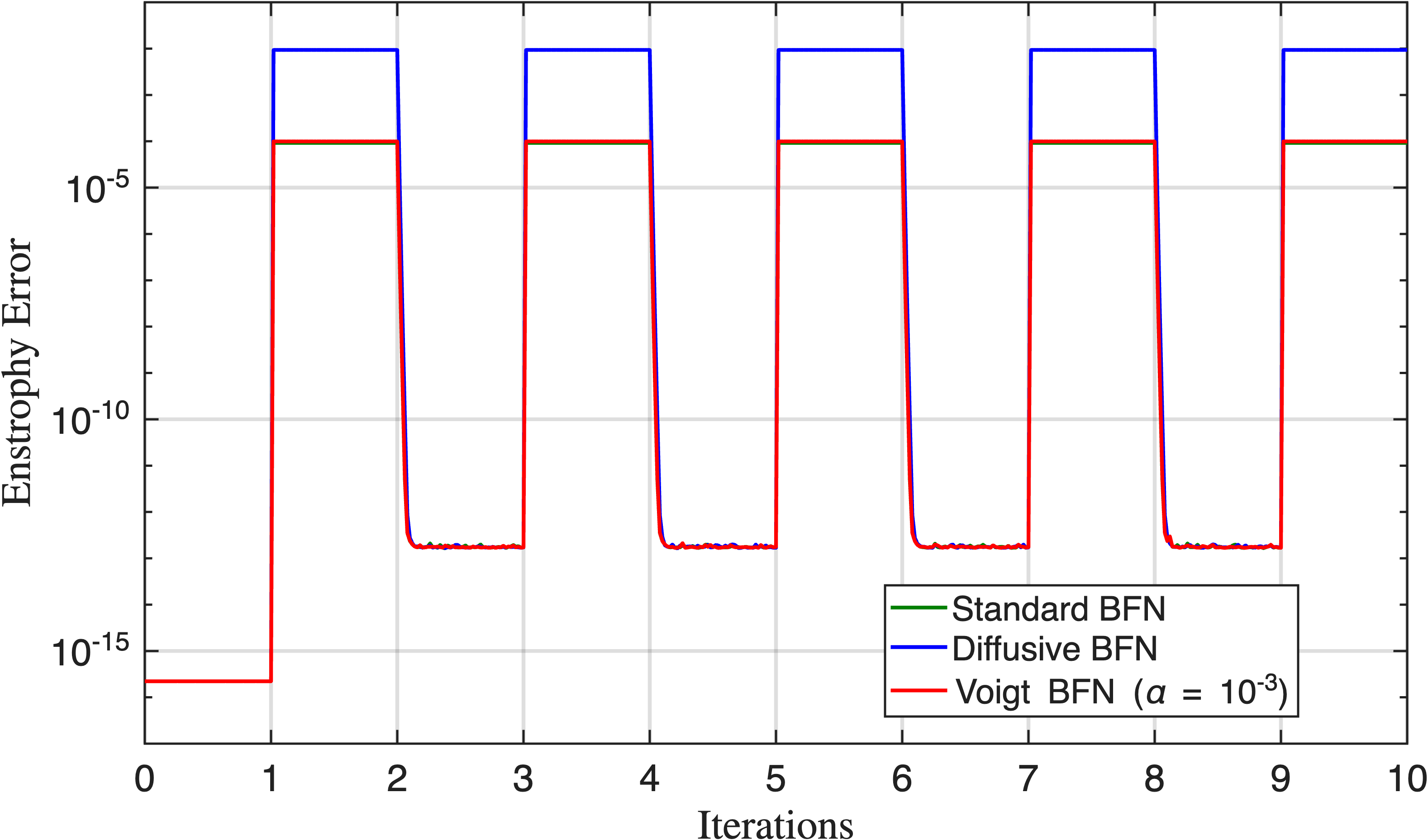}
 \end{subfigure}
 \caption{ \small $L^2$ and Enstrophy error evolution for 2D NSE. The entire solution was observed.}
 \label{fig:error:all modes: 3 methods}
\end{figure}

 \begin{figure}
 \begin{subfigure}[b]{.49\textwidth}
  \centering
	\includegraphics[width=\textwidth]{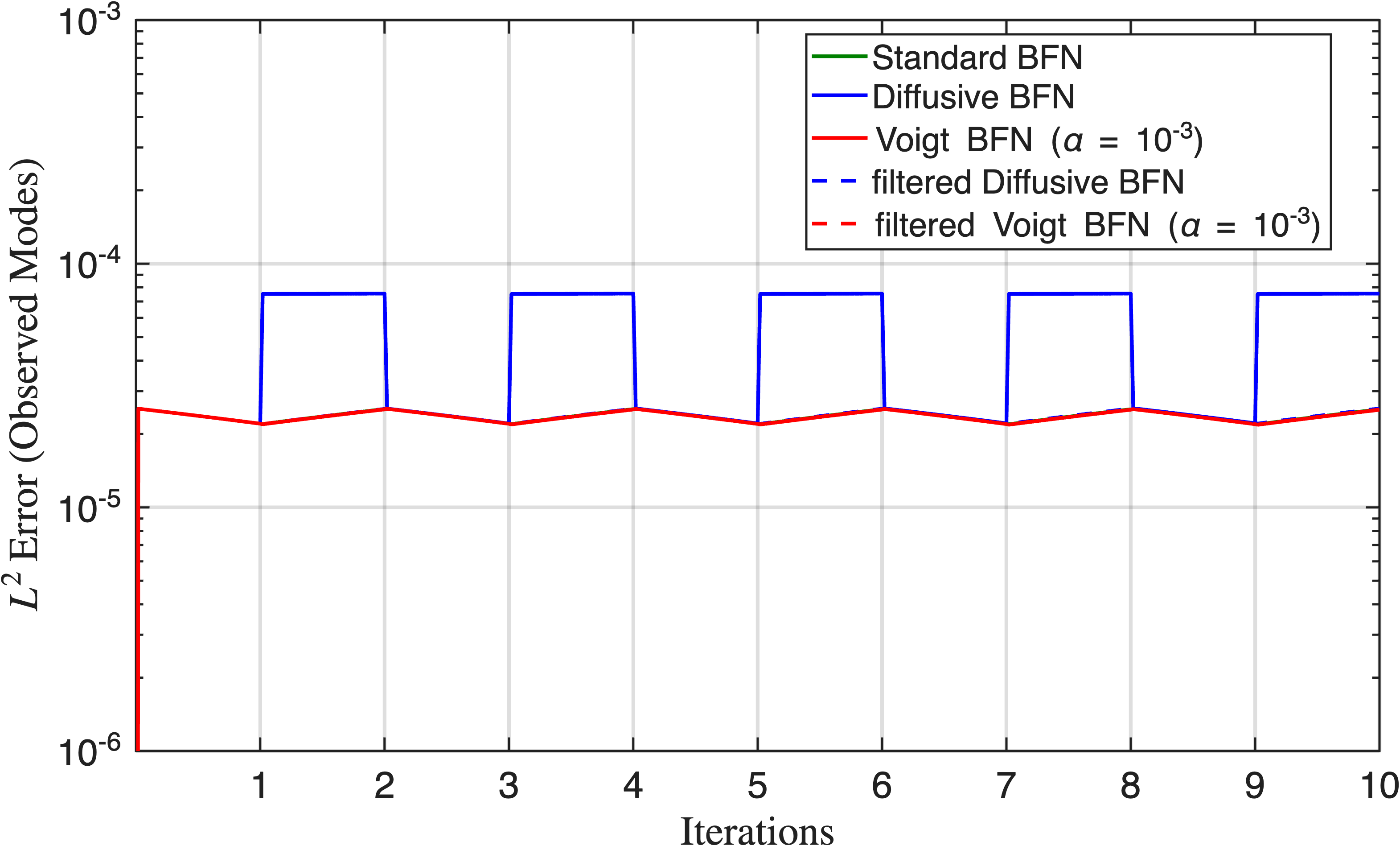}
 \end{subfigure}
 \begin{subfigure}[b]{.49\textwidth}
  \centering
	\includegraphics[width=\textwidth]{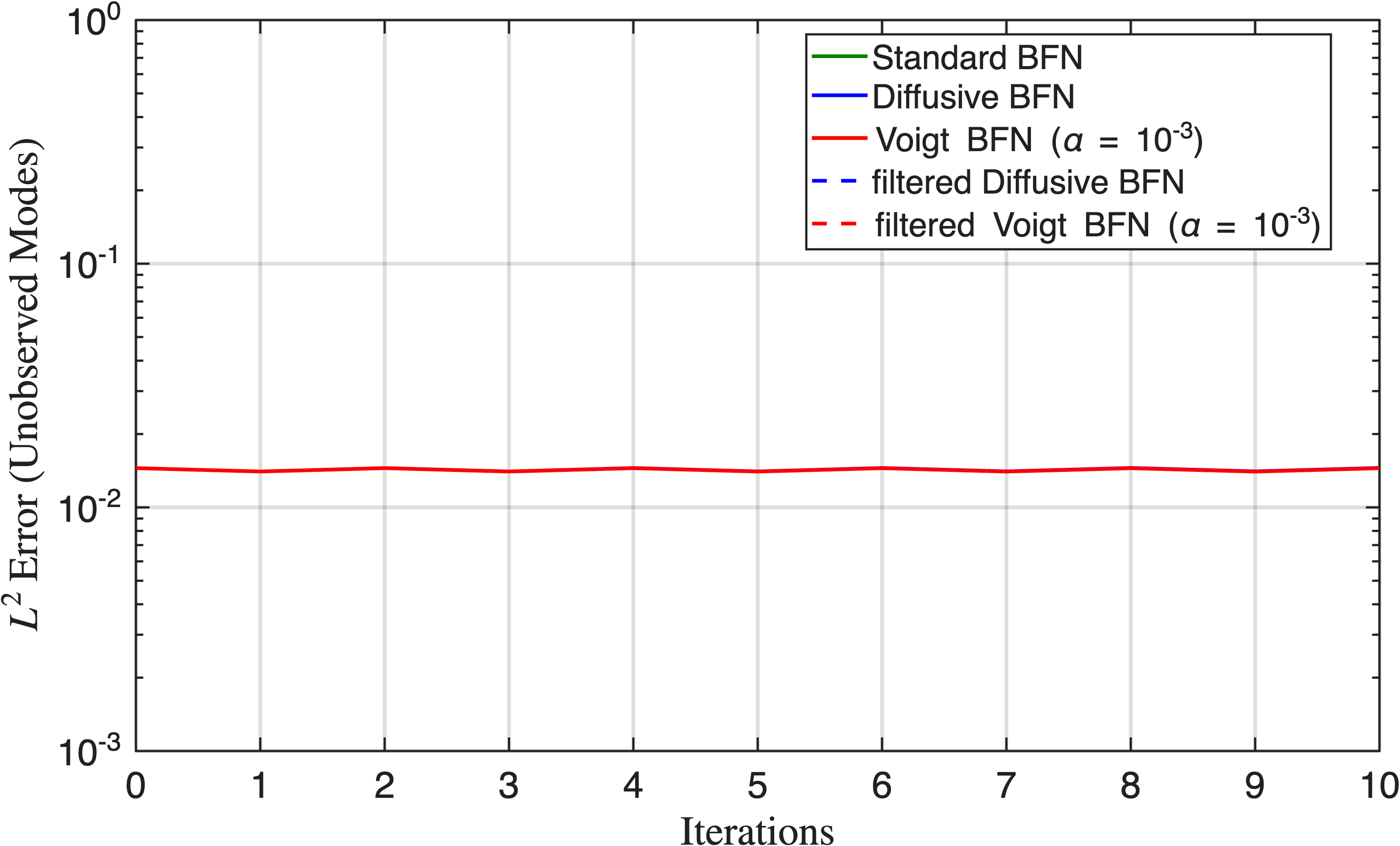}
 \end{subfigure}
 \caption{ \small $L^2$ error evolution for 2D NSE. }
 \label{fig:L2 error:50 modes: filtered}
\end{figure}

 \begin{figure}
 \begin{subfigure}[b]{.49\textwidth}
  \centering
	\includegraphics[width=\textwidth]{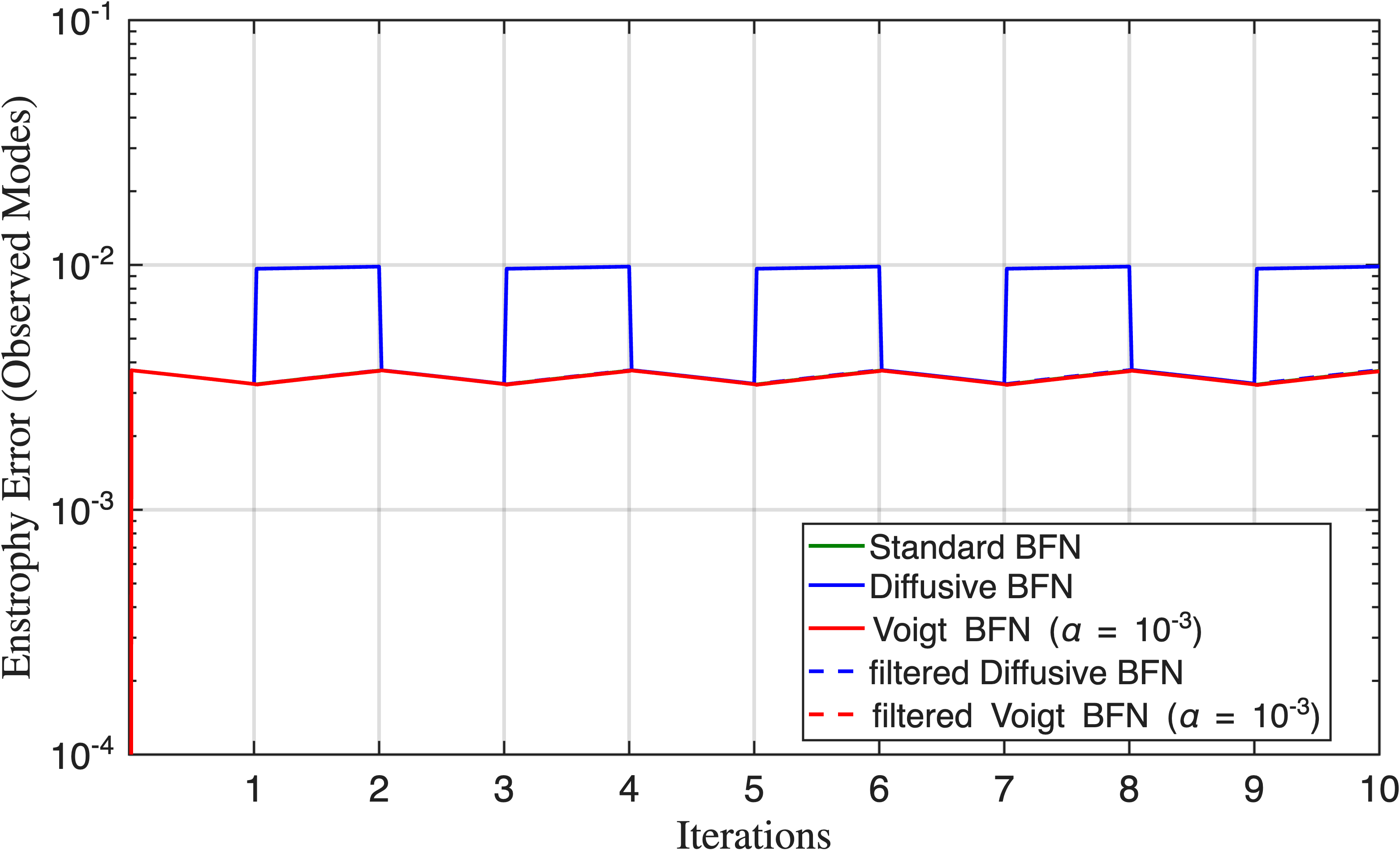}
 \end{subfigure}
 \begin{subfigure}[b]{.49\textwidth}
  \centering
	\includegraphics[width=\textwidth]{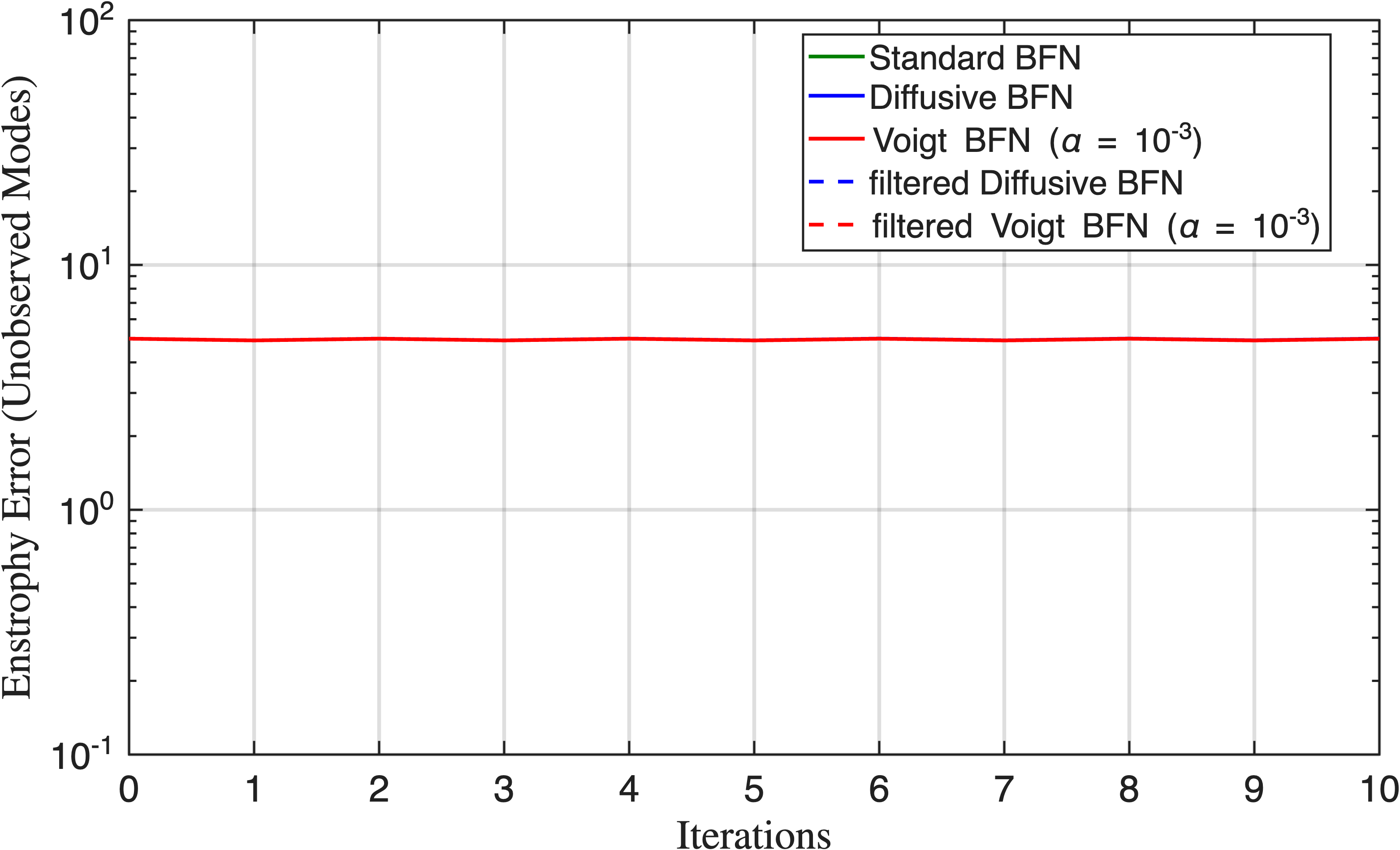}
 \end{subfigure}
 \caption{ \small Enstrophy error evolution for 2D NSE.}
 \label{fig:Ens error:50 modes: filtered}
\end{figure}


\subsection{dBFN and VBFN for KdV-type equations}\label{sect:KdV}
We now extend our investigation of stabilized backward steps to one-dimensional dispersive models. Specifically, we consider Korteweg-de Vries (KdV) dynamics on the periodic domain $\Omega = [0, 2\pi]$, restricted to mean-free solutions. Following the notation in \cref{sect:1D}, the observational operator is defined as the Fourier projection $P_M$, with $Q_M = I - P_M$ representing the projection onto the unobserved modes.

We examine two variations of the forced KdV equation, both expressed in the form $u_t = F(u)$ and driven by a time-independent forcing $f$. First, we consider the \textit{Damped-Driven KdV} equation with damping parameter $\gamma > 0$:
\begin{equation}\label{eq:KdV:dd}
    u_t + u u_x + u_{xxx} + \gamma u = f.
\end{equation}
In this case, the operator is defined as $F_{\gamma}(u) := -u u_x - u_{xxx} - \gamma u + f$.

Second, we consider the \textit{Viscous KdV} (or KdV-Burgers) equation with viscosity $\nu > 0$:
\begin{equation}\label{eq:KdV:visc}
    u_t + u u_x + u_{xxx} - \nu u_{xx} = f.
\end{equation}
The corresponding operator  for \cref{eq:KdV:visc} is $F_{\nu}(u) := -u u_x - u_{xxx} + \nu u_{xx} + f$.

\subsubsection{BFN and Stabilized Backward Variants}
Fix an assimilation window $[0,T]$ and assume the reference solution $u$ is observed continuously in time via $P_M u(t)$. Similar to the 2D NSE, direct application of the BFN algorithm to systems \cref{eq:KdV:dd,eq:KdV:visc} is problematic due to the ill-posed nature of the backward evolution in \cref{eq:KdV:visc}. During the backward step, the linear dissipative terms---damping ($\gamma u$) or diffusion ($-\nu u_{xx}$)---become energy-generating (anti-dissipative), inducing exponential growth in high wavenumbers in the case of \cref{eq:KdV:visc}.

To address this, we implement and compare three variations of the backward nudging step: the Standard BFN, the Stabilized (Diffusive/Damped) BFN, and the Voigt-regularized BFN. The forward and backward iterations are given by:
\begin{subequations}
\begin{align}
\text{(F)}\quad &
\begin{cases}
    v_t^n = F(v^n) + \mu P_M(u - v^n),\\
    v^n(0) = \tilde v^{n-1}(0),
\end{cases}\label{eq:KdV:BFN:F}
\\
\text{(Standard)}\quad &
\begin{cases}
    \tilde v_t^n = F(\tilde v^n) - \mu P_M(u - \tilde v^n),\\
    \tilde v^n(T) = v^n(T).
\end{cases}\label{eq:KdV:BFN:B}
\end{align}
\end{subequations}
Here, $F$ represents either $F_\gamma$ or $F_\nu$. \Cref{eq:KdV:BFN:B} represents the \textit{Standard BFN}, which retains the unaltered dynamics and is consequently susceptible to numerical instability.

To stabilize the backward evolution, we consider modifications that explicitly cancel the destabilizing anti-dissipative terms. This is analogous to the dBFN approach used for the NSE. For the \textit{Viscous KdV} \cref{eq:KdV:visc}, we employ a \textit{Diffusive BFN} strategy where the sign of the viscous term is reversed:
\begin{equation}
\text{(Diffusive)}\quad 
\begin{cases}
    \tilde v_t^n = F_{\nu}(\tilde v^n) - 2\nu \tilde v^n_{xx} - \mu P_M(u - \tilde v^n),\\
    \tilde v^n(T) = v^n(T).
\end{cases}\label{eq:KdV:dBFN:B}
\end{equation}
This effectively replaces backward anti-diffusion with forward diffusion, ensuring well-posedness at the cost of model consistency. Similarly, for the \textit{Damped KdV} \cref{eq:KdV:dd}, we employ a \textit{Damped BFN} strategy by reversing the sign of the linear damping term:
\begin{equation}
\text{(Damped)}\quad 
\begin{cases}
    \tilde v_t^n = F_{\gamma}(\tilde v^n) + 2\gamma \tilde v^n - \mu P_M(u - \tilde v^n),\\
    \tilde v^n(T) = v^n(T).
\end{cases}\label{eq:KdV:dampflip:B}
\end{equation}

Finally, we introduce the \textit{Voigt-regularized BFN} (VBFN) for these dispersive models. Rather than altering the physical parameters of the equation (e.g., flipping signs of viscosity or damping), we apply Helmholtz smoothing to the time derivative:
\begin{equation}
\text{(Voigt)}\quad 
\begin{cases}
    (I-\alpha^2\partial_{xx})\tilde v_t^n = F(\tilde v^n) - \mu P_M(u - \tilde v^n),\\
    \tilde v^n(T) = v^n(T),
\end{cases}\label{eq:KdV:Voigt:B}
\end{equation}
with regularization parameter $\alpha > 0$. As discussed in \cref{sect:NSE}, this regularization mollifies the nonlinear term and inhibits the energy cascade to high modes during backward dynamics while preserving the underlying structure of the dispersive operator.

\begin{Rmk}
We also investigated filtered variants for the KdV equations, where the stabilization (either diffusive or Voigt) is applied only to the unobserved modes via the projection $Q_M$. The resulting behavior was consistent with the 2D NSE results presented in \cref{eq:NSE:fdiff-B,eq:NSE:fVoigt-B}; therefore, we omit the detailed formulations here.
\end{Rmk}

\subsubsection{Numerical Methods and Experimental Setup}\label{sect:KdV:numerics}

All numerical simulations for the KdV-type equations were performed on a periodic domain $\Omega = [0, L_x]$ with $L_x = 2\pi$, discretized on an equispaced grid of $N=512$ points. Spatial derivatives were computed using a Fourier pseudo-spectral method. Nonlinear terms were evaluated in physical space and transformed back to the spectral domain using the $2/3$-rule for dealiasing.

Time integration was executed using a fourth-order integrating-factor Runge--Kutta (IFRK4) scheme. The linear dispersive term was integrated exactly via an exponential integrating factor in Fourier space, while the nonlinear term and the forcing were treated explicitly. The prescribed forcing is smooth and time-independent, defined as:
\begin{equation}
    f(x) = f_0 e^{\cos(x)} -\int_0^{2\pi} f_0 e^{\cos(x)} dx.
\end{equation}

We utilize an ``identical twin'' experimental framework. A reference solution $u(t)$ is generated by integrating the chosen KdV model forward in time over the interval $t \in [0, T]$ with $T=1$ and a time step of $\Delta t = 10^{-5}$. This reference trajectory is stored to provide continuous-in-time observations.

In these experiments, the assimilating state is initialized with the exact initial condition, $v^1(0) = u(0)$. While this is nonstandard for initial condition recovery problems, it serves to isolate the stability properties of the backward step; since the forward leg is resolved to machine precision, any subsequent error growth is attributable directly to numerical instability or model discrepancy within the regularized backward dynamics.

Observations are restricted to the low-mode Fourier components $P_M u(t)$ with a cutoff of $M=15$ (excluding the mean). The nudging term is implemented explicitly: for each forward leg, $\mu P_M(u-v)$ is added to the right-hand side, and for each backward leg, the corresponding sign change required by the BFN algorithm is applied. Unless otherwise specified, we employ a nudging strength of $\mu=1000$ and perform five back-and-forth cycles.

To quantify reconstruction performance, we monitor the total error and its decomposition into observed and unobserved modes:
\begin{equation}
    \|P_M(u-v)\|_{L^2} \quad \text{and} \quad \|Q_M(u-v)\|_{L^2}.
\end{equation}
In the following figures, even legs correspond to forward evolution and odd legs correspond to backward evolution. Results are plotted over a rescaled ``iterated time'' axis to visualize the complete BFN cycle structure.

---

\subsubsection{Computational Results: Damped and Driven KdV}\label{sect:KdV:results:damped}

For this set of experiments, we use the parameters $T=1$, $\Delta t=10^{-5}$, $M=15$, and $\mu=1000$, with a forcing amplitude of $f_0=1$ and a damping parameter of $\gamma=0.5$. In all simulations presented in this section we used the spatial resolution $N = 512$. We compare the standard BFN with the stabilized backward variants described in \cref{sect:KdV}.

The standard BFN iteration (\cref{fig:KdV:damped:BFN}) exhibits a characteristic ``sawtooth'' error pattern typical of ill-posed backward problems. During the forward leg (even intervals), the nudging term effectively pulls the solution toward the reference state, reducing the error. However, during the backward leg (odd intervals), the anti-damping term $+\gamma u$ amplifies errors exponentially, erasing the gains made during the forward pass and preventing convergence.

In contrast, both the Damped stabilization (\cref{fig:KdV:damped:dampedBFN}) and Diffusive stabilization (\cref{fig:KdV:damped:diffusiveBFN}) prevent this exponential blow-up, maintaining bounded errors throughout the backward steps. Nevertheless, a distinct ``error floor'' is observed. This residual error stems from the fact that the backward model is no longer physically consistent with the forward dynamics; the stabilization terms introduce a model discrepancy (bias) that nudging cannot fully rectify.

Notably, the Voigt-regularized BFN (\cref{fig:KdV:damped:VoigtBFN}) achieves a similar stabilizing effect but produces a qualitatively different error structure in the high modes. By mollifying the nonlinearity rather than directly altering the damping coefficient, the VBFN preserves the structural integrity of the low-frequency dynamics better than the diffusive approach, resulting in a slightly lower asymptotic error floor.

\begin{figure}[t]
\centering
\begin{subfigure}[t]{0.49\textwidth}
\centering
\includegraphics[width=\linewidth]{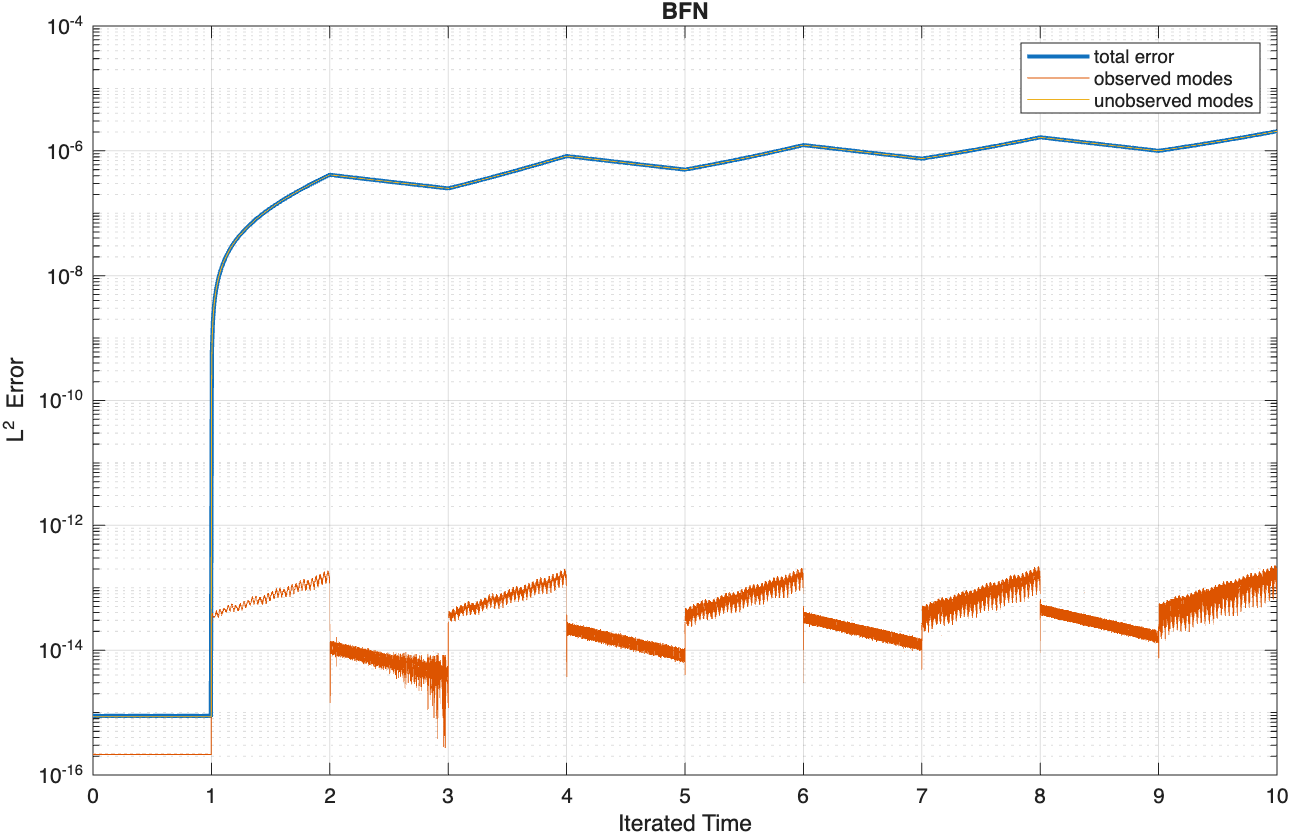}
\caption{\small\textbf{Standard} backward step.}
\label{fig:KdV:damped:BFN}
\end{subfigure}
\hfill
\begin{subfigure}[t]{0.49\textwidth}
\centering
\includegraphics[width=\linewidth]{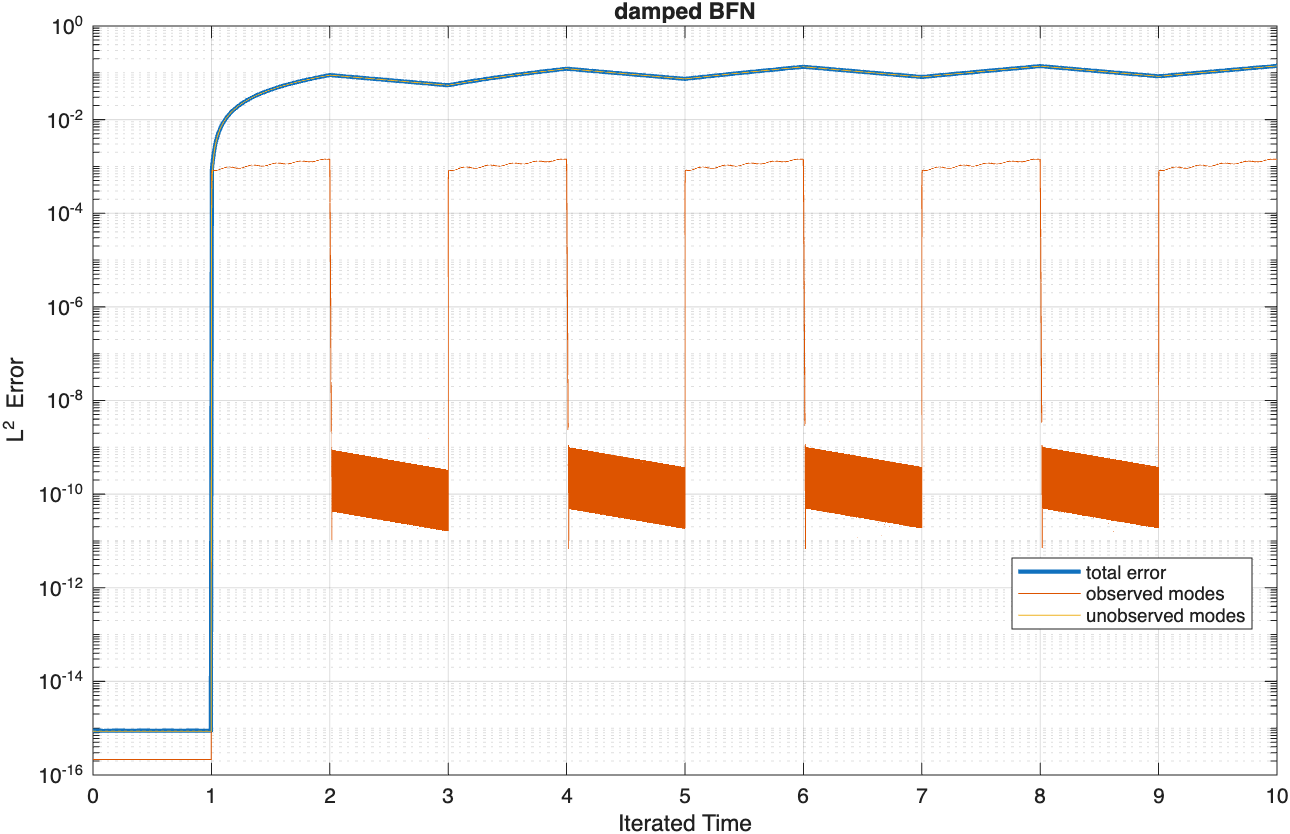}
\caption{\small \textbf{Damped} backward stabilization.}
\label{fig:KdV:damped:dampedBFN}
\end{subfigure}
\vspace{0.75em}
\begin{subfigure}[t]{0.49\textwidth}
\centering
\includegraphics[width=\linewidth]{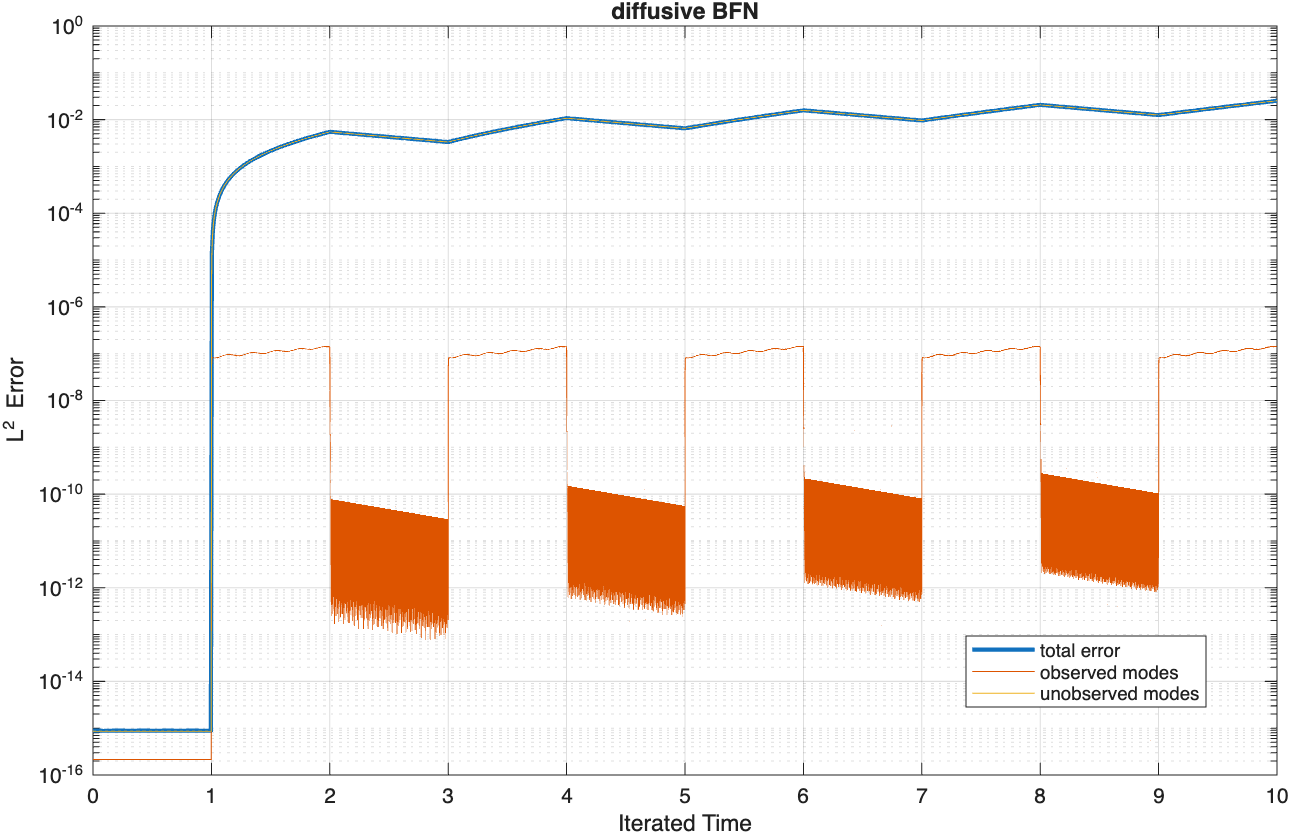}
\caption{\small \textbf{Diffusive} backward stabilization.}
\label{fig:KdV:damped:diffusiveBFN}
\end{subfigure}
\hfill
\begin{subfigure}[t]{0.49\textwidth}
\centering
\includegraphics[width=\linewidth]{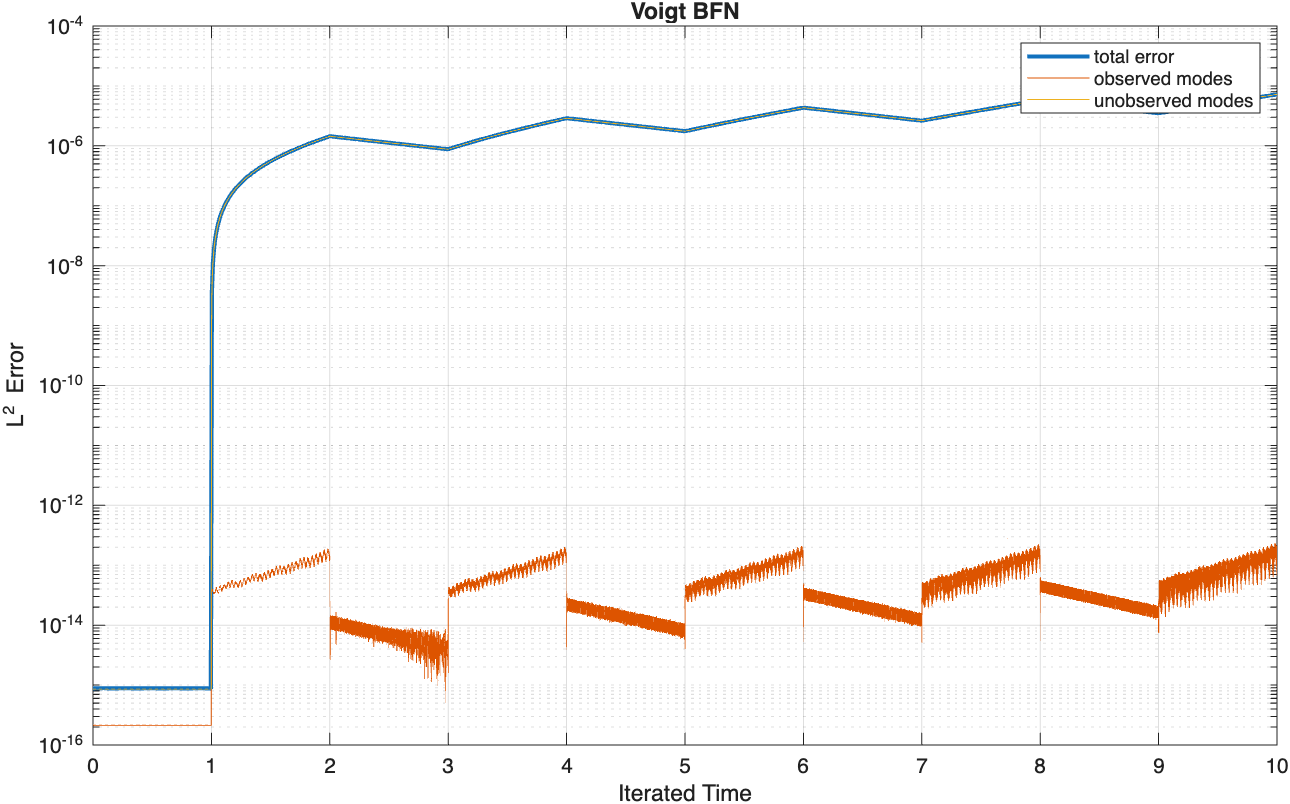}
\caption{\textbf{\small Voigt-regularized} backward step.}
\label{fig:KdV:damped:VoigtBFN}
\end{subfigure}
\caption{Damped/forced KdV: Error evolution for four backward reconstructions.}
\label{fig:KdV:damped:allBFN}
\end{figure}

\subsubsection{Computational Results: Viscous and Driven KdV}\label{sect:KdV:results:viscous}

For the viscous regime, we maintain the parameters $T=1$, $\Delta t=10^{-5}$, $M=15$, and $\mu=1000$, with a forcing amplitude of $f_0=1$ and a viscosity of $\nu=10^{-3}$. In all simulations presented in this section we used the spatial resolution $N = 512$.

The instability of the standard BFN is most pronounced in the viscous case, as illustrated in \cref{fig:KdV:viscous:BFN}. The backward heat operator acts as an anti-diffusive process with a growth rate proportional to $e^{\nu k^2 t}$. For the parameters chosen here, this growth is catastrophic; the error explodes by over 100 orders of magnitude within a single backward iteration, rendering the standard algorithm entirely unusable.

In contrast, the stabilized backward step (\cref{fig:KdV:viscous:dampedBFN}), which replaces anti-diffusion with damping (Diffusive BFN), successfully tames this instability. The error decreases initially before settling into a bounded oscillation near $10^{-2}$. This floor error represents the irreducible model discrepancy introduced by reversing the sign of the viscosity. Similarly, the Voigt-regularized approach (\cref{fig:KdV:viscous:VoigtBFN}) demonstrates robust stability. By filtering the time derivative, the VBFN prevents the blow-up of high-wavenumber modes while maintaining a bounded error profile comparable to that of the diffusive stabilization.

\begin{figure}[t]
\centering
\begin{subfigure}[t]{0.49\textwidth}
\centering
\includegraphics[width=\linewidth]{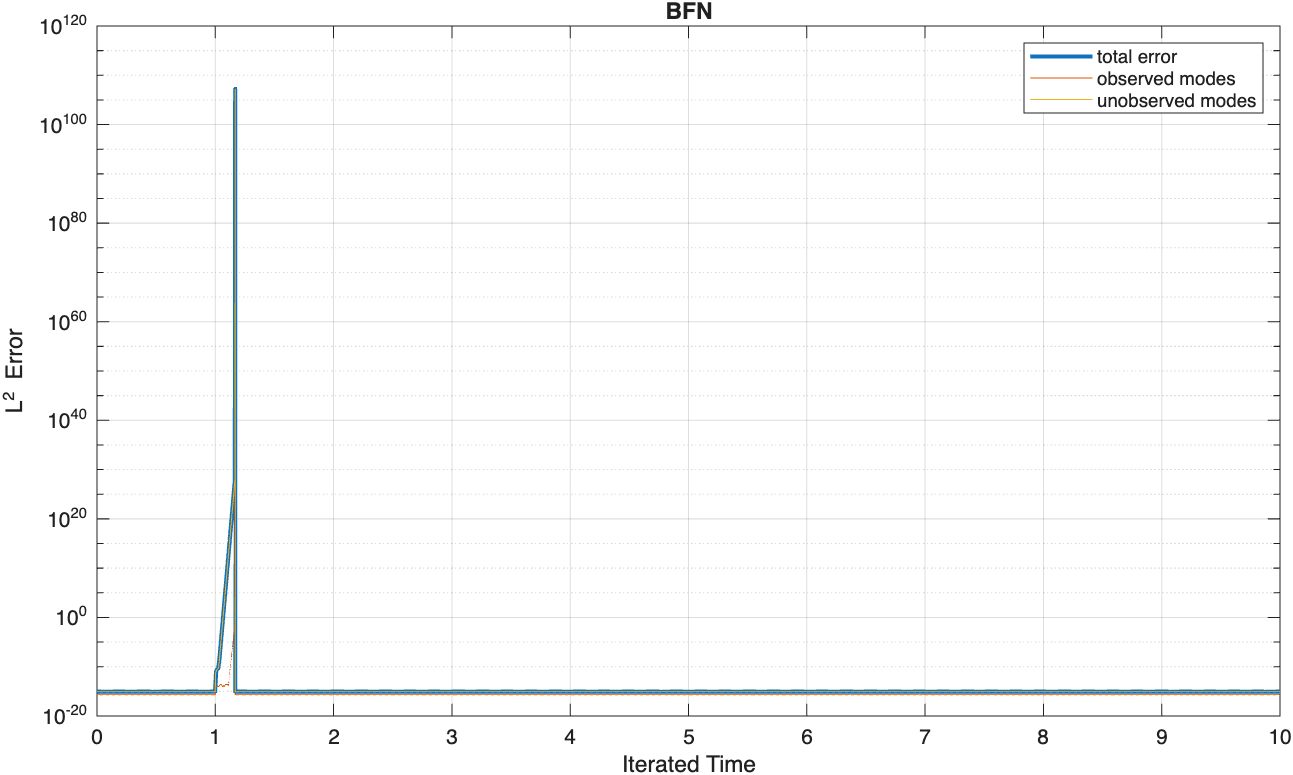}
\caption{\small \textbf{Standard} backward step.}
\label{fig:KdV:viscous:BFN}
\end{subfigure}
\hfill
\begin{subfigure}[t]{0.49\textwidth}
\centering
\includegraphics[width=\linewidth]{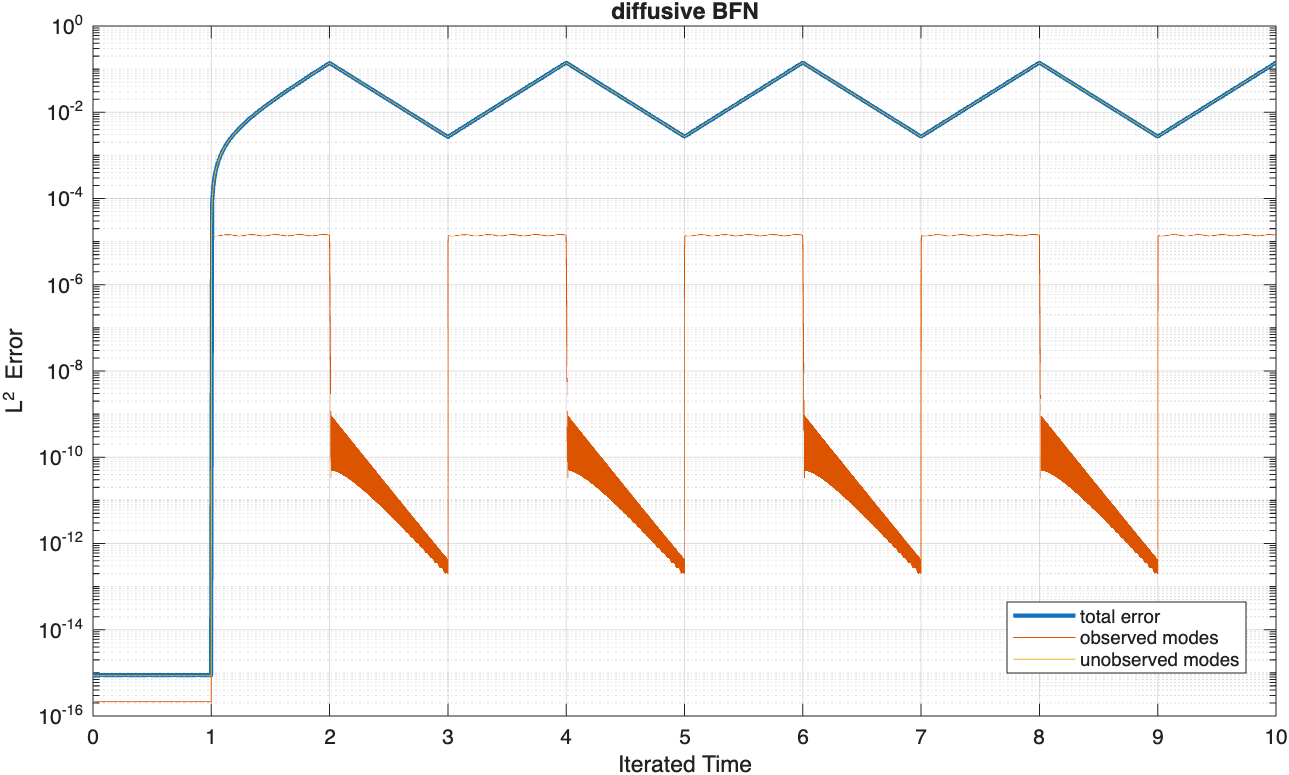}
\caption{\small \textbf{Stabilized} backward step with modified dissipation.}
\label{fig:KdV:viscous:dampedBFN}
\end{subfigure}
\vspace{0.75em}
\begin{subfigure}[t]{0.65\textwidth}
\centering
\includegraphics[width=\linewidth]{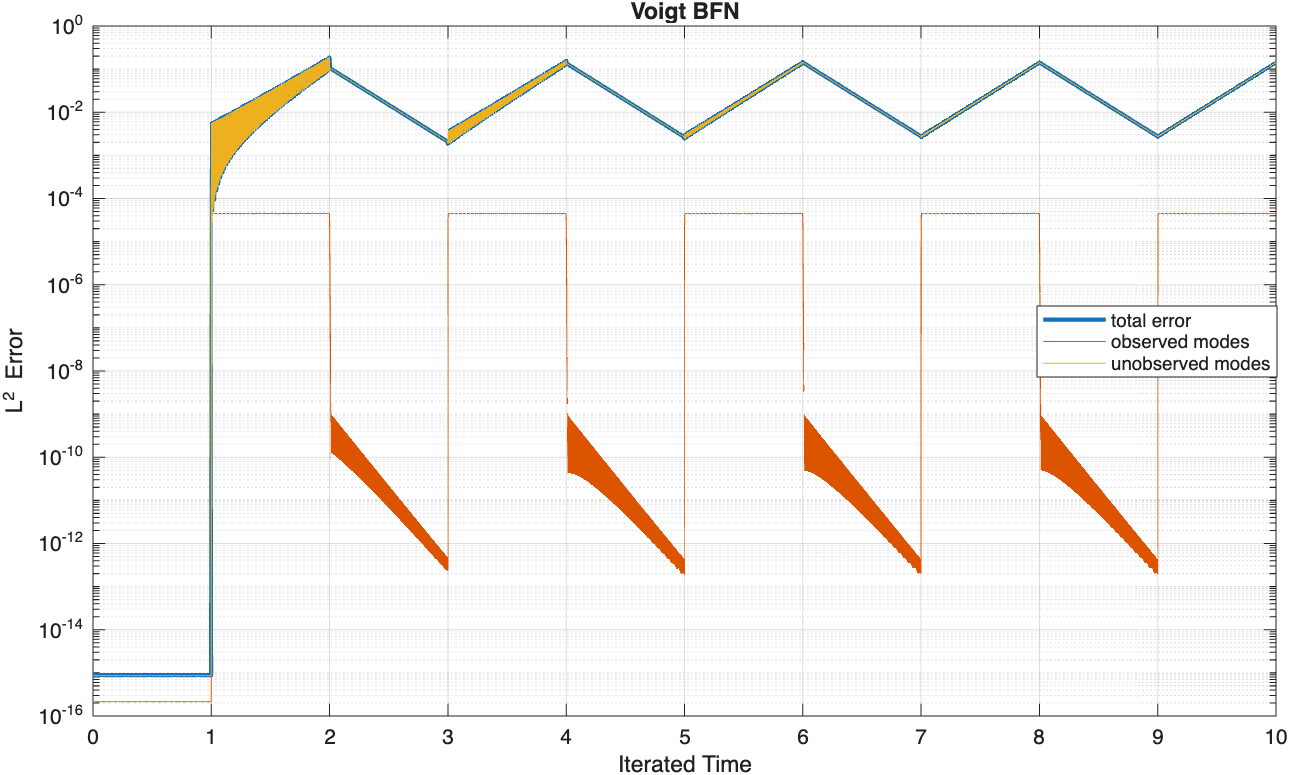}
\caption{\small \textbf{Voigt-regularized} backward step.}
\label{fig:KdV:viscous:VoigtBFN}
\end{subfigure}
\caption{\small Viscous/forced KdV: Error evolution for three backward reconstructions.}
\label{fig:KdV:viscous:allBFN}
\end{figure}

\section{Conclusion}\label{sect:Conclusion}

In this work, we critically examined the efficacy of the back-and-forth nudging (BFN) algorithm across a range of dynamical systems, including the Lorenz 1963 system and various 1D and 2D partial differential equations (PDEs). Our investigation focused on the algorithm's ability to reconstruct initial states from sparse, low-mode observations.

First, we analyzed the BFN algorithm for the Lorenz 1963 system both analytically and numerically. While prior studies established that observing the first two components of the Lorenz system is sufficient for forward-nudging to recover the true solution asymptotically, we demonstrated that the BFN algorithm fails to recover the true initial state for specific classes of solutions. A critical omission in previous literature \cite{Peng_Wu_Shiue_2023} was the failure to reconstruct the unobserved third component; our numerical simulations corroborate this, showing persistent error in the third component when only two are observed.

In the context of 1D PDEs—specifically the linear transport and Burgers equations in both viscous and inviscid regimes—our analytical findings reveal that the BFN algorithm is fundamentally limited under low-mode observations. We constructed counterexamples demonstrating that distinct initial conditions can produce identical sparse spatial observational data, rendering the recovery of the true initial state mathematically impossible. 

Numerical experiments supported these findings. In trials where observations were identically zero, the nudged solution remained trivial. Conversely, when the entire solution was observed, as in \cite{Auroux_Nodet_2012}, the algorithm recovered the initial condition to single precision. However, for more realistic initial data containing both observed low frequencies and unobserved high frequencies, the error reached a non-zero floor, with no development of high-frequency content in the nudged solution. This suggests that while BFN can succeed in regimes where the full state is already observed, such success occurs only when data assimilation is effectively redundant and extraneous.

Finally, we extended our analysis to the 2D incompressible Navier-Stokes equations and 1D KdV-type equations to test the algorithm in complex dissipative and dispersive settings. To address the inherent ill-posedness of the backward evolution, we introduced and compared two regularization strategies: the established Diffusive BFN (dBFN), which stabilizes the system by reversing the sign of the dissipation, and a proposed Voigt-regularized BFN (VBFN), which utilizes Helmholtz smoothing.

Our experiments showed that while standard BFN fails catastrophically in these regimes due to the exponential growth of high-wavenumber errors, both regularized variants successfully stabilize the backward iterations. However, this stability introduces model inconsistency; both methods reach an error saturation floor determined by the regularization parameter. Notably, the VBFN scheme outperformed dBFN by introducing significantly less bias into the observed large-scale modes. Furthermore, applying ``spectral filtering'' to restrict regularization to the unobserved modes did not alleviate the core issue: all variants remained unable to reconstruct the unobserved high-frequency spectrum.

In summary, these results underscore that even for dissipative systems, the BFN algorithm is fundamentally limited when only sparse spatial observational information is available. These findings are not restricted to the specific models studied here; the arguments can be readily extended to other dynamical systems, such as 2D magnetohydrodynamics (MHD) equations, highlighting a universal challenge in reconstructing transient states from incomplete data.


\section*{Acknowledgments}
\noindent
The work of AF was supported in part by the National Science Foundation through DMS 2206493.  The work of EST has benefited from the inspiring environment of the CRC 1114 “Scaling Cascades in Complex Systems”, Project Number 235221301, Project A02, funded by Deutsche Forschungsgemeinschaft (DFG). Moreover, this work was also supported in part by the DFG Research Unit FOR 5528 on Geophysical Flow.

\bibliographystyle{plain}
\bibliography{VictorBiblio}

\end{document}